\newcommand{\myDelta}{\mathrm{\Delta}}
\newcommand{\myGamma}{\mathrm{\Gamma}}
\newlist{case}{enumerate}{3} 
\setlist[case]{align=left, 
                 listparindent=\parindent, 
                 parsep=\parskip, 
                 font=\normalfont\bfseries, 
                 leftmargin=0pt, 
                 labelwidth=0pt, 
                 itemindent=.4em,labelsep=.4em, 
                 partopsep=0pt, 
                 }
\setlist[case,1]{label=Case~\arabic*:,ref=\arabic*}
\setlist[case,2]{label=Case~\thecasei.\arabic*:,ref=\thecasei.\arabic*}
\def\MT@register@subst@font{\MT@exp@one@n\MT@in@clist\font@name\MT@font@list
   \ifMT@inlist@\else\xdef\MT@font@list{\MT@font@list\font@name,}\fi}
\theoremstyle{plain}
\newtheorem{theorem}{Theorem}
\newtheorem{lemma}[theorem]{Lemma}
\newtheorem{corollary}[theorem]{Corollary}
\newtheorem{conjecture}[theorem]{Conjecture}
\newtheorem{proposition}[theorem]{Proposition}
\theoremstyle{definition}
\newtheorem{problem}[theorem]{Problem}
\newcommand{\setbuilder}[2]{\{#1:#2\}}
\newcommand{\set}[1]{\left\{#1\right\}}
\newcommand{\opair}[2]{(#1,#2)}
\newcommand{\arc}[2]{#1\mathord{\rightarrow}#2}
\newcommand{\norm}[1]{\lVert#1\rVert}
\newcommand{\ipr}[2]{\left\langle #1, #2 \right\rangle}
\newcommand{\abs}[1]{\left\lvert#1\right\rvert}
\newcommand{\myangle}{\sphericalangle}
\newcommand{\numbersystem}[1]{\mathbb{#1}}
\newcommand{\bE}{\numbersystem{E}}
\newcommand{\bR}{\numbersystem{R}}
\DeclareMathOperator{\conv}{conv}
\title{Shortest directed networks in the plane}
\author{Alastair Maxwell\thanks{C/O Konrad Swanepoel, Department of Mathematics, London School of Economics and Political Science, Houghton Street, London WC2A 2AE, United Kingdom. This work is partially based on a dissertation of the first author written as part of an MSc in Applicable Mathematics at the LSE.}
\and 
Konrad J.\ Swanepoel\thanks{Department of Mathematics, London School of Economics and Political Science, Houghton Street, London WC2A 2AE, United Kingdom. Email: 
    \href{mailto:k.swanepoel@lse.ac.uk}{k.swanepoel@lse.ac.uk}}
}
\date{}
\begin{document}
\maketitle

\begin{abstract}
Given a set of sources and a set of sinks as points in the Euclidean plane, a \emph{directed network} is a directed graph drawn in the plane with a directed path from each source to each sink.
Such a network may contain nodes other than the given sources and sinks, called Steiner points.
We characterize the local structure of the Steiner points in all shortest-length directed networks in the Euclidean plane.
This characterization implies that these networks are constructible by straightedge and compass.
Our results build on unpublished work of Alfaro, Campbell, Sher, and Soto from 1989 and 1990.
Part of the proof is based on a new method that uses other norms in the plane.
This approach gives more conceptual proofs of some of their results, and as a consequence, we also obtain results on shortest directed networks for these norms.
\end{abstract}

\textbf{MSC:} Primary 05C20. Secondary 49Q10, 52A40, 90B10.

\medskip
\textbf{Keywords:} Euclidean Steiner problem, shortest directed network, normed plane, straightedge and compass

\section{Introduction}
In the well-studied Euclidean Steiner problem, a finite set of points in the plane is given, and the problem is to find a shortest network interconnecting all points.
Its fascinating history is given a definitive treatment by Brazil, Graham, Thomas and Zachariasen \cite{BGTZ}.
What distinguishes this problem from the well-known Minimal Spanning Tree problem, is that such a network (necessarily a tree) may contain new points, called \emph{Steiner points}.
The degree of such a Steiner point is always $3$, and the angle between any two incident edges is $120^\circ$.
In fact, any such tree is constructible using straightedge and compass.
Despite the simplicity of this local structure, it is NP-hard to compute such a shortest network.
On the other hand, there is an exact algorithm (GeoSteiner) that can feasibly compute these networks for given point sets of size in the thousands.
For more detail, see Chapter~1 of Brazil and Zachariasen~\cite{BZ}.

A directed version of this problem was introduced by Frank Morgan for undergraduate research at Williams College in the late 1980s \cite{Alfaro-Thesis, Alfaro, ACSS}.
In this problem, a set of sources and a set of sinks in the plane are given, and the object is to find a shortest directed network containing a directed path from each source to each sink.
As these directed networks are not necessarily trees, they are much harder to study, and even their existence is non-trivial~\cite{Alfaro}.
See Fig.~\ref{fig33}(b) for an example of a shortest directed network that has a cycle.
There is no known algorithm for finding such networks, and as a first step, the local structure of such networks has to be described.
There are some partial results in~\cite{Alfaro-Thesis, ACSS}.
The main contribution of this paper is to complete their results with a complete description of the local structure of the directed edges incident to a Steiner point in a shortest directed network (Theorem~\ref{thm:main}).
We also find new proofs of some of their results.
Our characterization easily implies that any shortest directed network in the plane is constructible by straightedge and compass (Corollary~\ref{cor:main}), and that there exists an algorithm (even though prohibitively inefficient) that can compute them (Corollary~\ref{cor:algorithm}).

We make use of norms other than the Euclidean, which on the one hand gives conceptually simpler proofs of some of the results in \cite{Alfaro-Thesis}, and on the other hand also give examples of shortest directed networks for these norms.

\section{Main results}
The digraphs $G=(V,E)$ in this paper will be \emph{simple}, that is, they are without loops.
We call the elements of $V$ \emph{nodes}. The elements of $E$, which we call \emph{directed edges} or just \emph{edges}, 
are directed pairs of distinct nodes, and denoted by $\arc{x}{y}$, where $x$ is the \emph{tail} and $y$ the \emph{head} of the directed edge.
The \emph{indegree} of a node $x$ is the number $\deg^-(x)$ of directed edges in $E$ with head $x$, and the \emph{outdegree} the number $\deg^+(x)$ of directed edges in $E$ with tail $x$.
The \emph{degree} of $x$ is the ordered pair $\deg(x)=(\deg^-(x),\deg^+(x))$.

Given a digraph $G=(V,E)$ and nodes $a,b\in V$, a \emph{directed path} from $a$ to $b$ is a finite sequence of distinct vertices $a=x_1,x_2,\dots,x_n=b$ ($n\geq 1$) such that $\arc{x_i}{x_{i+1}}\in E$ for each $i=1,\dots,n-1$.
We allow a path with a single vertex.
We call any directed path in $G$ from $a$ to $b$ an \emph{$(a,b)$-path}.
Given subsets $A,B\subseteq V$, we say that $G$ is an \emph{$(A,B)$-network} if $G$ contains an $(a,b)$-path for each $a\in A$ and $b\in B$.
We do not require $A$ and $B$ to be disjoint.
The nodes in $V\setminus (A\cup B)$ are called the \emph{Steiner points} of the $(A,B)$-network.
We call an $(A,B)$-network \emph{simple} if for each Steiner point $s\in V$, $\deg^-(s)\geq 1$, $\deg^+(s)\geq 1$ and $\deg^-(s)+\deg^+(s)\geq 3$.

Let $X=(\bR^d,\norm{\cdot})$ be a $d$-dimensional normed space.
A \emph{directed network} or \emph{geometric digraph} in $X$ is an embedding of a digraph $G=(V,E)$ into $X$ such that each node in $V$ is represented as a point in $X$ and where each directed edge is drawn as a straight-line segment from its tail to its head.
We allow distinct nodes in $V$ to be represented by the same point in $X$.
The \emph{length} of a directed edge $\arc{x}{y}$ in a directed network is its length in the norm $\norm{\arc{x}{y}}:=\norm{x-y}$.
The \emph{length} of a network $G$ is the sum of the lengths of its directed edges and denoted $\norm{G} := \sum_{\arc{x}{y}\in E} \norm{\arc{x}{y}}$.
We will sometimes work with the same network in $\bR^d$ but with more than one norm.
To avoid confusion, we will always use subscripts to distinguish between different norms.
These norms are all introduced in Section~\ref{sec:basic}.

Note that any $(A,B)$-network in $X$ can be modified into a simple $(A,B)$-network without increasing its length.
Indeed, given any Steiner point $s$ of the $(A,B)$-network $G$, if $\deg^-(s)=0$ or $\deg^+(s)=0$, then $s$ and its incident directed edges can be removed from $G$, and the new directed graph remains an $(A,B)$-network.
Also, if $\deg^-(s)=\deg^+(s)=1$, then $s$ and its incident directed edges $\arc{x}{s}$ and $\arc{s}{y}$ can be replaced by a single directed edge $\arc{x}{y}$ to obtain an $(A,B)$-network $G'$ with $\norm{G'}\leq \norm{G}$.
By applying these two procedures repeatedly, we obtain a simple $(A,B)$-network after finitely many steps.

Given finite sets $A$ and $B$ of points from $X$, a \emph{shortest $(A,B)$-network} is an $(A,B)$-network of minimum length among all $(A,B)$-networks in $X$.
From the remarks in the previous paragraph, we only have to consider simple $(A,B)$-networks when finding shortest ones.
It is not obvious that any finite sets of points $A$ and $B$ in a finite-dimensional normed space $X$ has a shortest $(A,B)$-network, as an $(A,B)$-network can have cycles and there is no immediate upper bound for the number of Steiner points in such a network.
However, it has been shown that the number of Steiner points in a simple $(A,B)$-network is bounded by $O(|A|+|B|)$ in the Euclidean plane \cite{Alfaro} and $O(|A|^2|B|+|A||B|^2)$ in any normed space (or indeed, any metric space)~\cite{SwaJCMCC}.
This, together with a compactness argument, shows that for any given finite subsets $A$ and $B$ of a finite-dimensional normed space, there always exists at least one shortest $(A,B)$-network.

Partial results on the local structure of Steiner points in a shortest $(A,B)$-network in the Euclidean plane can be found in \cite{Alfaro-Thesis, ACSS}.
Our main result is a completion of these partial results into a full characterization.
\begin{theorem}\label{thm:main}
The following is a complete list of all possibilities for the local geometric structure of a Steiner point $s$ in a shortest $(A,B)$-network $G$ in the Euclidean plane.
\begin{enumerate}
\item\label{12} $\deg(s)=(1,2)$ or $(2,1)$.
The three directed edges incident to $s$ are pairwise at $120^\circ$ angles (Fig.~\ref{fig12}).
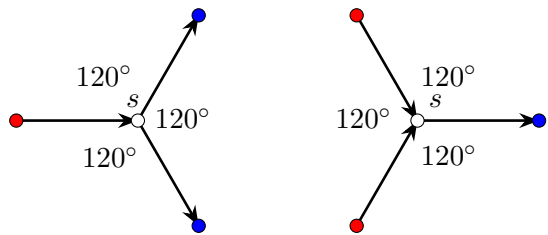
\begin{figure}[h]
\definecolor{ffffff}{rgb}{1.,1.,1.}
\definecolor{ffqqqq}{rgb}{1.,0.,0.}
\definecolor{qqqqff}{rgb}{0.,0.,1.}
\centering
\begin{tikzpicture}[line cap=round,line join=round,>=Stealth,x=0.35cm,y=0.35cm]
\draw[color=black] (1.4,0.1) node {$120^\circ$};
\draw[color=black] (-1.6,1.7) node {$120^\circ$};
\draw[color=black] (-1.3668525851623285,-1.3881666854846548) node {$120^\circ$};
\draw[color=black] (11.5,1.7) node {$120^\circ$};
\draw[color=black] (8.255483717475798,0.1) node {$120^\circ$};
\draw[color=black] (11.5,-1.3191893643187904) node {$120^\circ$};
\draw [->,line width=1pt] (-0.30940107675850337,0.) -- (2.,4.);
\draw [->,line width=1pt] (-0.30940107675850337,0.) -- (2.,-4.);
\draw [->,line width=1pt] (-4.9282032302755105,0.) -- (-0.3094010767585029,0.);
\draw [->,line width=1pt] (8.,4.) -- (10.309401076758503,0.);
\draw [->,line width=1pt] (8.,-4.) -- (10.309401076758503,0.);
\draw [->,line width=1pt] (10.309401076758503,0.) -- (14.92820323027551,0.);
\draw [fill=ffffff] (-0.30940107675850337,0.) circle (2.5pt);
\draw[color=black] (-0.5,0.75) node {$s$};
\draw [fill=ffffff] (10.309401076758503,0.) circle (2.5pt);
\draw[color=black] (11,0.75) node {$s$};
\draw [fill=qqqqff] (2.,-4.) circle (2.5pt);
\draw [fill=qqqqff] (2.,4.) circle (2.5pt);
\draw [fill=ffqqqq] (-4.9282032302755105,0.) circle (2.5pt);
\draw [fill=ffqqqq] (8.,4.) circle (2.5pt);
\draw [fill=ffqqqq] (8.,-4.) circle (2.5pt);
\draw [fill=qqqqff] (14.92820323027551,0.) circle (2.5pt);
\end{tikzpicture}
\caption{$\deg(s)=(1,2)$ or $(2,1)$}\label{fig12}
\end{figure}

\item\label{22} $\deg(s)=(2,2)$. One of the following two cases:
\begin{enumerate}[label={\textup{(\alph*)}},ref={(\alph*)}]
\item\label{a} Opposite pairs of directed edges lie on two straight lines, with directed edges alternating between incoming and outgoing (Fig.~\ref{fig22}(a)).

\item\label{b} Opposite pairs of directed edges lie on two straight lines, directed edges do not alternate between incoming and outgoing, and the angles between the two incoming directed edges and between the two outgoing directed edges are $\geq 120^\circ$ (Fig.~\ref{fig22}(b)).
\end{enumerate}

\begin{figure}[h]
\definecolor{ffffff}{rgb}{1.,1.,1.}
\definecolor{qqqqff}{rgb}{0.,0.,1.}
\definecolor{ffqqqq}{rgb}{1.,0.,0.}
\centering
\begin{tikzpicture}[line cap=round,line join=round,>=Stealth,x=0.35cm,y=0.35cm]
\draw (-5,-8) node {(a)};
\draw [fill=qqqqff] (4.,-8.) circle (2.5pt);
\draw (10.5,-8) node {(b)};
\draw[color=black] (14.4,-13.3) node {$\geq 120^\circ$};
\draw[color=black] (19.4,-13.3) node {$\geq 120^\circ$};
\draw [->,line width=1pt] (-2.,-8.) -- (0.9784613544229659,-13.295753308926642);
\draw [->,line width=1pt]  (4.04786712635805,-18.75320728899109) -- (0.9784613544229659,-13.295753308926642);
\draw [->,line width=1pt]  (0.9784613544229659,-13.295753308926642) -- (-2.056625796820976,-18.615252646659357);
\draw [->,line width=1pt] (0.9784613544229659,-13.295753308926642) -- (4.,-8.);
\draw [->,line width=1pt] (13.893695095165082,-8.009989517407663) -- (16.922390036516978,-13.359261991288326);
\draw [->,line width=1pt]  (13.945428086039483,-18.718718628408155) -- (16.922390036516978,-13.359261991288326);
\draw [->,line width=1pt] (16.922390036516978,-13.359261991288326) -- (19.893695095165082,-8.009989517407663);
\draw [->,line width=1pt]  (16.922390036516978,-13.359261991288326) -- (20.015432348635578,-18.822184610156953);
\draw [fill=ffffff] (0.9784613544229659,-13.295753308926642) circle (2.5pt);
\draw[color=black] (17.0,-12) node {$s$};
\draw [fill=ffffff] (16.922390036516978,-13.359261991288326) circle (2.5pt);
\draw[color=black] (1.8,-13.131555613973136) node {$s$};
\draw [fill=ffqqqq] (-2.,-8.) circle (2.5pt);
\draw [fill=qqqqff] (-2.056625796820976,-18.615252646659357) circle (2.5pt);
\draw [fill=ffqqqq] (4.04786712635805,-18.75320728899109) circle (2.5pt);
\draw [fill=ffqqqq] (13.893695095165082,-8.009989517407663) circle (2.5pt);
\draw [fill=ffqqqq] (13.945428086039483,-18.718718628408155) circle (2.5pt);
\draw [fill=qqqqff] (20.015432348635578,-18.822184610156953) circle (2.5pt);
\draw [fill=qqqqff] (19.893695095165082,-8.009989517407663) circle (2.5pt);
\end{tikzpicture}
\caption{$\deg(s)=(2,2)$: two possibilities}\label{fig22}
\end{figure}
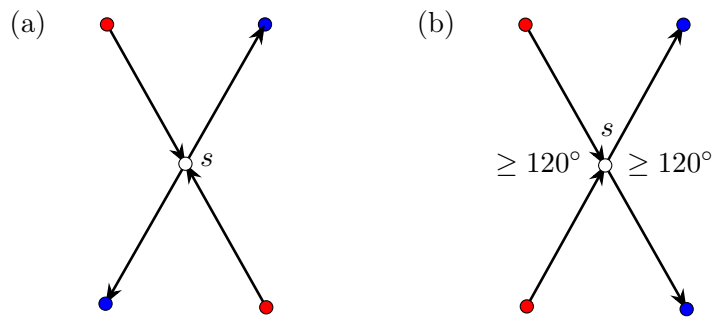

\item\label{23} $\deg(s)=(2,3), (3,2)$.

In the case $(2,3)$, the three outgoing directed edges are pairwise at $120^\circ$ degrees, and the two incoming directed edges lie on a straight line (Fig.~\ref{fig23}(a)).
The case $(3,2)$ is exactly opposite: the three incoming directed edges are pairwise at $120^\circ$ degrees, and the two outgoing directed edges lie on a straight line (Fig.~\ref{fig23}(b)).

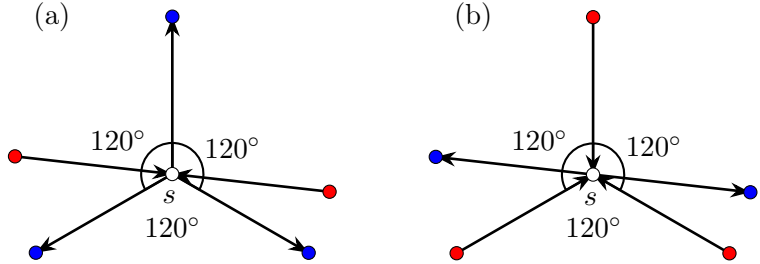
\begin{figure}[h]
\definecolor{qqqqff}{rgb}{0.,0.,1.}
\definecolor{ffffff}{rgb}{1.,1.,1.}
\definecolor{ffqqqq}{rgb}{1.,0.,0.}
\centering
\begin{tikzpicture}[line cap=round,line join=round,>=Stealth,x=0.4cm,y=0.4cm]
\draw [shift={(1.9982402614052646,-1.2350546258915043)},line width=0.8pt] (0,0) -- (89.98074029825241:1.0346598174879706) arc (89.98074029825241:209.98074029825236:1.0346598174879706) -- cycle;
\draw [shift={(1.9982402614052646,-1.2350546258915043)},line width=0.8pt] (0,0) -- (-30.019259701747604:1.0346598174879706) arc (-30.019259701747604:89.98074029825243:1.0346598174879706) -- cycle;
\draw [shift={(15.983392127784313,-1.2522989561829603)},line width=0.8pt] (0,0) -- (89.98074029825217:1.0346598174879706) arc (89.98074029825217:209.98074029825216:1.0346598174879706) -- cycle;
\draw [shift={(15.983392127784313,-1.2522989561829603)},line width=0.8pt] (0,0) -- (-30.01925970174783:1.0346598174879706) arc (-30.01925970174783:89.98074029825219:1.0346598174879706) -- cycle;
\draw [<-,line width=1pt] (2.,4.) -- (1.9982402614052646,-1.2350546258915038);
\draw [<-,line width=1pt] (-2.536329904113388,-3.8510579605101936) -- (1.9982402614052646,-1.2350546258915043);
\draw [<-,line width=1pt] (1.9982402614052646,-1.2350546258915043) -- (-3.2292402566406757,-0.6466604829516096);
\draw [<-,line width=1pt] (1.9982402614052646,-1.2350546258915043) -- (7.225720779451205,-1.823448768831399);
\draw [<-,line width=1pt] (6.53105068832918,-3.854105917164317) -- (1.9982402614052646,-1.2350546258915043);
\draw [<-,line width=1pt] (15.983392127784313,-1.2522989561829603) -- (15.98515186637907,3.982755669708533);
\draw [<-,line width=1pt] (21.210872645830236,-1.8406930991228447) -- (15.983392127784313,-1.2522989561829603);
\draw [<-,line width=1pt] (10.755911609738392,-0.6639048132430758) -- (15.983392127784313,-1.2522989561829603);
\draw [<-,line width=1pt] (15.983392127784313,-1.2522989561829603) -- (11.44882196226566,-3.8683022908016254);
\draw [<-,line width=1pt] (15.983392127784313,-1.2522989561829603) -- (20.516202554708208,-3.8713502474557853);
\draw [fill=qqqqff] (6.53105068832918,-3.854105917164317) circle (2.5pt);
\draw (-2,4) node {(a)};
\draw [fill=qqqqff] (2.,4.) circle (2.5pt);
\draw [fill=qqqqff] (-2.536329904113388,-3.8510579605101936) circle (2.5pt);
\draw[color=black] (0.2,-0.1120862439161584) node {$120^\circ$};
\draw[color=black] (2,-3) node {$120^\circ$};
\draw [fill=ffqqqq] (-3.2292402566406757,-0.6466604829516096) circle (2.5pt);
\draw [fill=ffqqqq] (7.225720779451205,-1.823448768831399) circle (2.5pt);
\draw[color=black] (4,-0.35) node {$120^\circ$};
\draw [fill=ffqqqq] (20.516202554708208,-3.8713502474557857) circle (2.5pt);
\draw (12,4) node {(b)};
\draw [fill=ffqqqq] (15.98515186637907,3.9827556697085327) circle (2.5pt);
\draw [fill=ffqqqq] (11.44882196226566,-3.8683022908016254) circle (2.5pt);
\draw[color=black] (14.2,-0.1120862439161584) node {$120^\circ$};
\draw[color=black] (16,-3) node {$120^\circ$};
\draw [fill=qqqqff] (10.755911609738392,-0.6639048132430758) circle (2.5pt);
\draw [fill=qqqqff] (21.210872645830236,-1.8406930991228447) circle (2.5pt);
\draw[color=black] (18,-0.3) node {$120^\circ$};
\draw [fill=ffffff] (1.9982402614052646,-1.2350546258915043) circle (2.5pt);
\draw[color=black] (1.9,-2) node {$s$};
\draw [fill=ffffff] (15.983392127784313,-1.2522989561829603) circle (2.5pt);
\draw[color=black] (15.9,-2.05) node {$s$};
\end{tikzpicture}
\caption{$\deg(s)=(2,3)$ or $(3,2)$}\label{fig23}
\end{figure}

\item\label{33} $\deg(s)=(3,3)$.
The incoming and outgoing edges alternate, with consecutive directed edges at $60^\circ$ angles (Fig.~\ref{fig33}(a)).
\begin{figure}[h]
\definecolor{qqqqff}{rgb}{0.,0.,1.}
\definecolor{ffffff}{rgb}{1.,1.,1.}
\definecolor{ffqqqq}{rgb}{1.,0.,0.}
\centering
\begin{tikzpicture}[line cap=round,line join=round,>=Stealth,x=0.4cm,y=0.4cm]
\draw (-3,4) node {(a)};
\draw [->,line width=1pt] (2.,4.) -- (1.9982402614052646,-1.2350546258915038);
\draw [->,line width=1pt] (-2.536329904113388,-3.8510579605101936) -- (1.9982402614052646,-1.2350546258915043);
\draw [->,line width=1pt] (6.53105068832918,-3.854105917164317) -- (1.9982402614052646,-1.2350546258915043);
\draw [->,line width=1pt] (1.9982402614052646,-1.2350546258915043) -- (6.532810426923917,1.380948708727185);
\draw [->,line width=1pt] (1.9982402614052646,-1.2350546258915043) -- (1.9964805228105291,-6.470109251783008);
\draw [->,line width=1pt] (1.9982402614052646,-1.2350546258915043) -- (-2.5345701655186508,1.3839966653813085);
\draw [fill=ffffff] (1.9982402614052646,-1.2350546258915043) circle (2.5pt);
\draw[color=black] (1.5,-0.3) node {$s$};
\draw [fill=ffqqqq] (6.53105068832918,-3.854105917164317) circle (2.5pt);
\draw [fill=ffqqqq] (2.,4.) circle (2.5pt);
\draw [fill=ffqqqq] (-2.536329904113388,-3.8510579605101936) circle (2.5pt);
\draw [fill=qqqqff] (1.9964805228105291,-6.4701092517830086) circle (2.5pt);
\draw [fill=qqqqff] (6.532810426923917,1.380948708727185) circle (2.5pt);
\draw [fill=qqqqff] (-2.5345701655186508,1.3839966653813085) circle (2.5pt);
\begin{scope}[xshift=6cm]
\draw (-3,4) node {(b)};
\draw [->,line width=1pt] (2.,4.) -- (6.532810426923917,1.380948708727185);
\draw [->,line width=1pt] (6.532810426923917,1.380948708727185) -- (6.53105068832918,-3.854105917164317);
\draw [->,line width=1pt] (6.53105068832918,-3.854105917164317)-- (1.9964805228105291,-6.470109251783008);
\draw [->,line width=1pt] (1.9964805228105291,-6.470109251783008) -- (-2.536329904113388,-3.8510579605101936);
\draw [->,line width=1pt] (-2.536329904113388,-3.8510579605101936) -- (-2.5345701655186508,1.3839966653813085);
\draw [->,line width=1pt] (-2.5345701655186508,1.3839966653813085) -- (2.,4.);
\draw [fill=ffqqqq] (6.53105068832918,-3.854105917164317) circle (2.5pt);
\draw [fill=ffqqqq] (2.,4.) circle (2.5pt);
\draw [fill=ffqqqq] (-2.536329904113388,-3.8510579605101936) circle (2.5pt);
\draw [fill=qqqqff] (1.9964805228105291,-6.4701092517830086) circle (2.5pt);
\draw [fill=qqqqff] (6.532810426923917,1.380948708727185) circle (2.5pt);
\draw [fill=qqqqff] (-2.5345701655186508,1.3839966653813085) circle (2.5pt);
\end{scope}
\end{tikzpicture}
\caption{Sources and sinks are alternating vertices of a regular hexagon. Since the directed network in (a) with $\deg(s)=(3,3)$ is shortest, the directed network in (b) is also shortest and contains a cycle \cite{Alfaro-Thesis}}\label{fig33}
\end{figure}
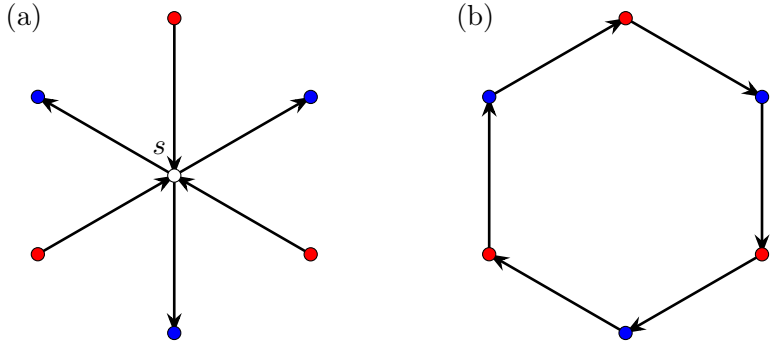

\end{enumerate}
\end{theorem}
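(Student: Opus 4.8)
The plan is to argue variationally. Fix a shortest simple $(A,B)$-network $G$ in the Euclidean plane and a Steiner point $s$, with incoming edges $\arc{x_1}{s},\dots,\arc{x_k}{s}$ and outgoing edges $\arc{s}{y_1},\dots,\arc{s}{y_m}$. First I would dispose of the degenerate configurations at $s$ (an edge of length zero, or two edges pointing in the same direction), checking that a shortest network contains none of these, so that all edges at $s$ may be taken to have positive length and pairwise distinct directions. Write $u_i:=(x_i-s)/\norm{x_i-s}$ and $v_j:=(y_j-s)/\norm{y_j-s}$.

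Everything is then squeezed out of two families of length-non-increasing local modifications. (i) Moving $s$: no required path is destroyed, so $p\mapsto\sum_i\norm{x_i-p}+\sum_j\norm{p-y_j}$ has a local minimum at $p=s$, giving the equilibrium condition $\sum_i u_i+\sum_j v_j=0$. (ii) Splitting off outgoing edges: for a subset $S$ of the outgoing edges, create a new node $s'$, let the edges of $S$ emanate from $s'$ instead of $s$, and add the edge $\arc{s}{s'}$; every source still reaches every sink, since all incoming edges and all outgoing edges outside $S$ still meet at $s$. Putting $s'$ at $s+\varepsilon w$ and differentiating in $\varepsilon$ gives $\norm{\sum_{j\in S}v_j}\le 1$ for every such $S$, and symmetrically $\norm{\sum_{i\in T}u_i}\le 1$ for every subset $T$ of incoming edges. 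A \emph{mixed} subset cannot be split off without disconnecting some source--sink pair, and this asymmetry is exactly what allows Steiner configurations here that are impossible in the undirected Steiner problem.

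Applying (ii) to two-element subsets, any two outgoing edges and any two incoming edges meet at an angle $\ge 120^\circ$ at $s$; hence $k,m\le 3$, and three such edges must be mutually at exactly $120^\circ$. If $m=3$ then $\sum_j v_j=0$, so $\sum_i u_i=0$ by equilibrium, which is impossible for $k=1$ and forces the two incoming edges to be antipodal when $k=2$; symmetrically if $k=3$. With $k,m\ge 1$ and $k+m\ge 3$, this leaves exactly the degrees $(1,2),(2,1),(2,2),(2,3),(3,2),(3,3)$, and also settles the cases $(2,3)$ and $(3,2)$. For $(1,2)$ and $(2,1)$, equilibrium for three unit vectors gives the three $120^\circ$ angles. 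For $(2,2)$, equilibrium forces all four edges onto two lines through $s$: if the two incoming edges are collinear we are in case~\ref{a} (incoming and outgoing alternate around $s$, with no further angle restriction), and otherwise $v_1=-u_2$, $v_2=-u_1$, incoming and outgoing do not alternate, and the two-element inequality $\norm{u_1+u_2}\le 1$ is precisely the stated requirement that the two incoming (equivalently, the two outgoing) edges subtend an angle $\ge 120^\circ$.

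The remaining case $(3,3)$ is the hardest. The arguments above show the three incoming edges are mutually at $120^\circ$ and so are the three outgoing edges, but they leave the relative rotation of the two tripods undetermined: the equilibrium condition and all the splitting inequalities hold for every rotation, and in fact no first-order perturbation — moving $s$, splitting, or both — can detect it. To prove that this rotation must be exactly $60^\circ$, so that incoming and outgoing edges alternate at $60^\circ$ and $s$ lies on three straight source-to-sink segments, I expect one must go beyond first-order local variations: either a second-order analysis combined with a comparison against an alternative network on the six neighbours of $s$, or — following the route announced in the introduction — passing the sub-network at $s$ to another norm on the plane whose unit ball has the three incoming directions among its extreme points and deducing the alignment there. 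This is the step I expect to be the main obstacle, and where both the norm-change technique and the earlier work of Alfaro, Campbell, Sher and Soto are needed.
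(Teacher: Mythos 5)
Your first-order machinery is sound: moving $s$ gives the equilibrium condition, the splitting move gives $\norm{\sum_{j\in S}v_j}\le 1$ for one-sided subsets, and from these you correctly recover the necessity statements for degrees $(1,2)$, $(2,1)$, $(2,2)$, $(2,3)$, $(3,2)$ --- including the neat observations that four planar unit vectors summing to zero must form two antipodal pairs (so a $(2,2)$ point automatically lies on two lines), and that equilibrium plus the $120^\circ$ condition on the majority triple forces the minority pair at a $(2,3)$ or $(3,2)$ point to be collinear. This is a genuinely different, and in places slicker, route to the necessity half than the paper's surgical arguments (Lemmas~\ref{lem:FT}--\ref{lem:FT3} and the replacement of Fig.~\ref{fig5}). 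However, there are two genuine gaps. The first is the one you flag yourself: for degree $(3,3)$ your moves cannot detect the relative rotation of the two tripods, and you leave the $60^\circ$ alternation unproved. The paper needs neither second-order analysis nor a norm change here: Proposition~\ref{lem:33} rules out a smallest in--out angle $\theta<60^\circ$ by an explicit, non-infinitesimal comparison --- when $\theta<60^\circ$ the three segments $a_ib_i$ meet pairwise in points $s_1,s_2,s_3$ distinct from $o$, and rerouting through the triangle $s_1s_2s_3$ (Fig.~\ref{fig:shorten33}) is strictly shorter by the triangle inequality. Some such finite comparison must be supplied; expecting it to fall out of a second-order expansion is not a proof.

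The second, more serious gap is that your proposal addresses only necessary conditions, while the theorem claims a \emph{complete list of possibilities}; the bulk of the paper is the converse half, namely that each listed configuration really occurs, equivalently that the corresponding star on the neighbours of $s$ is itself a shortest directed network. This is precisely where the new content lies: the $\norm{\cdot}_{1(\theta)}$-comparison proving Theorem~\ref{thm4} (degree $(2,2)$, alternating case~\ref{a}), the hexagonal-norm comparison proving Theorem~\ref{thm6} (degree $(3,3)$), and above all the long path-and-region analysis of Section~\ref{sec:deg5} establishing sufficiency for degree $(2,3)$/$(3,2)$, which the paper identifies as the hardest part of Theorem~\ref{thm:main}. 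So your remark that the equilibrium argument ``settles the cases $(2,3)$ and $(3,2)$'' overstates what it does: it settles only their necessity. Without sufficiency arguments of this kind (global lower bounds against \emph{all} competing $(A,B)$-networks, not just local perturbations of the star), the theorem as stated is not proved.
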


Case \ref{12} in the above theorem was well known and follows easily from classical results in elementary geometry (see Lemmas~\ref{lem:FT} and \ref{lem:FT2}).
Case \ref{22}\ref{b} was known \cite[Theorem~2.4]{ACSS} and we do not prove it in this paper, but Case \ref{22}\ref{a} is new, and is proved as Theorem~\ref{thm4} in Section~\ref{sec:deg4}.
We present a proof that uses a different norm on $\bR^2$ and that gives as a byproduct results for the $\ell_1$-plane (Corollaries~\ref{cor1} and \ref{cor2} below).

Case \ref{23} is new and the hardest part of the theorem.
We devote Section~\ref{sec:deg5} to its proof.
It was known in the special case where the two incoming directed edges (where $\deg(s)=(2,3)$) are orthogonal to one of the outgoing directed edges \cite{Alfaro-Thesis}.
Case \ref{33} was known \cite{Alfaro-Thesis}, but we give a different proof in Section~\ref{sec:deg6}, this time using the norm with the regular hexagon as unit ball (Theorem~\ref{thm6}), and again with a corollary for shortest direct networks in this norm (Corollary~\ref{cor3}).

An immediate consequence of Theorem~\ref{thm:main} is that shortest directed networks can be constructed with straightedge and compass.
\begin{corollary}\label{cor:main}
Given two finite sets $A$ and $B$ of points in the Euclidean plane, any shortest $(A,B)$-network can be constructed from $A$ and $B$ by straightedge and compass.
\end{corollary}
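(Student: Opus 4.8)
The plan is to use Theorem~\ref{thm:main} to reduce the construction of a shortest network to finitely many explicit compass-and-straightedge constructions. Since any shortest $(A,B)$-network may be taken to be simple and has only $O(|A|+|B|)$ Steiner points \cite{Alfaro}, there are finitely many possibilities for its underlying digraph together with the labelling recording which nodes are which points of $A\cup B$. By Theorem~\ref{thm:main} each Steiner point of a shortest network has one of the finitely many local types \ref{12}--\ref{33}, and within each type there are only finitely many combinatorial choices (the cyclic arrangement of the incident edges, which opposite pair is collinear, which slots carry incoming versus outgoing edges). Enumerating over underlying digraphs, labellings, and a choice of local type with its combinatorial data at each Steiner point gives a finite list $\mathcal{C}$ of \emph{combinatorial configurations}. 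It therefore suffices to show that for each member of $\mathcal{C}$ one can construct from $A\cup B$, by straightedge and compass, a finite family of directed networks that contains every shortest $(A,B)$-network realizing it: then a shortest $(A,B)$-network exists, realizes some member of $\mathcal{C}$, and is thus one of finitely many constructible candidates, the shortest of which is a shortest $(A,B)$-network.

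Fix a configuration and consider the constraints that Theorem~\ref{thm:main} imposes on the positions of its Steiner points. These are of two kinds. The \emph{linear} constraints come from cases \ref{22}, \ref{23} and \ref{33}: each Steiner point $s$ there must lie on one or more lines, each through two of its neighbours (a pair of opposite incident edges, or the two incoming edges, or the two outgoing edges, being collinear). The \emph{angular} constraints come from cases \ref{12}, \ref{23} and \ref{33}: a triple of edges incident to $s$ makes pairwise $120^\circ$ angles, so $s$ must be the Fermat point of the triple of its corresponding neighbours (which, having positive edge lengths, forms a triangle with all angles below $120^\circ$). Every angle occurring is a multiple of $60^\circ$, so all the relevant geometry is governed by the field obtained from $\numbersystem{Q}(\sqrt{3})$ by adjoining the coordinates of $A\cup B$, and each of the above conditions is imposed by classical operations: intersecting lines through already-located points, erecting equilateral triangles on segments, drawing the circle on which a given segment subtends $120^\circ$, and intersecting such lines and circles. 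As in the reconstruction phase of the Euclidean Steiner tree algorithm, I would process the Steiner points in an order in which each newly located point has all neighbours relevant to its constraints already in hand, using a Melzak-type fold (replacing a neighbour by an equilateral-triangle apex) to turn a $120^\circ$ triple at $s$ into a single collinearity when needed, and finally unfolding to read off the true positions.

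The \textbf{main obstacle} is that, unlike the Euclidean Steiner tree, a shortest directed network can contain cycles (Fig.~\ref{fig33}(b)), so a valid processing order need not exist: the constraints on the Steiner points around a cycle may refer to one another circularly. I would handle this by exploiting the extra rigidity supplied by Theorem~\ref{thm:main} along a cycle --- a Steiner point on the cycle of type \ref{22}, \ref{23} or \ref{33} contributes a collinearity through two of its cycle-neighbours, and one of type \ref{12} has its cycle-neighbours and its off-cycle neighbour at fixed mutual angles --- so that the whole cycle becomes the solution of a bounded system of line and equilateral-triangle incidences, which one solves in a single block of compass-and-straightedge steps; the convexity of the total-length functional in the Steiner-point positions makes the shortest network of a given topology generically unique, so that these constraints, which it satisfies, indeed determine it. Solving each cycle in this way and then reducing the remaining tree-like part by the Melzak construction locates all Steiner points of the configuration; taking the shortest of the finitely many networks so obtained over all members of $\mathcal{C}$ yields a shortest $(A,B)$-network constructed entirely by straightedge and compass, which proves the corollary.
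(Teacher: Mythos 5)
You correctly isolate the crux---Steiner points lying on cycles, where the constraints refer to one another circularly---but your treatment of that case is exactly where the argument has a genuine gap. You assert that the whole cycle ``becomes the solution of a bounded system of line and equilateral-triangle incidences, which one solves in a single block of compass-and-straightedge steps.'' That cannot be taken for granted: a coupled system of incidence constraints, each individually linear or quadratic, can have solutions whose algebraic degree over the base field is not a power of $2$, hence not constructible; constructibility requires exhibiting an explicit tower of intersections of lines and circles through \emph{already known} points, and this is precisely what is unavailable when the ``line through two of its neighbours'' on which a cycle Steiner point must lie passes through neighbours that are themselves unknown Steiner points. The appeal to convexity and generic uniqueness does not repair this: uniqueness of a solution never implies constructibility (the real root of $x^3-2$ is unique), and in any case the length functional is convex but not strictly convex in the Steiner-point positions, so even uniqueness of the minimizer for a fixed topology is not clear.

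The idea you are missing---and the one the paper uses, which makes cycles and simultaneous systems disappear entirely---is that in every case of Theorem~\ref{thm:main} of degree at least $4$ some pair of edges through the Steiner point is collinear: both opposite pairs in case~\ref{22}, the two incoming (resp.\ outgoing) edges in case~\ref{23}, and opposite in/out edges in case~\ref{33}. Such a Steiner point therefore never needs to be \emph{located} in the first place: one deletes it and splices each collinear pair into a single straight edge joining its neighbours (a degree-$(3,3)$ point can equivalently be viewed as two superimposed $120^\circ$ triples belonging to two full Steiner trees meeting at that point). After this elimination every remaining Steiner point has exactly three incident edges pairwise at $120^\circ$, and the network decomposes into full Steiner trees, any two meeting in finitely many points, each constructible from its terminals by the Melzak--Hwang algorithm \cite[Section~1.2.1]{BZ}; the eliminated Steiner points are then recovered afterwards as intersections of the constructed straight edges. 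Your enumeration of combinatorial configurations is fine (it is really what Corollary~\ref{cor:algorithm} needs), but without this elimination step the per-configuration construction, in particular around cycles, is not established, so the proposal as written does not prove the corollary.
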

\begin{proof}
Consider the underlying graph $G$ of a shortest $(A,B)$-network.
Since we can easily construct a Steiner point of degree $(2,2)$ from its neighbours, we may without loss of generality replace each Steiner point of degree $4$ and its incident edges by two edges joining opposite neighbours.
We can similarly replace the incoming (outgoing) edges of a Steiner point of degree $(2,3)$ (degree $(3,2)$, respectively) by an edge joining the two neighbours.
What remains are Steiner points with three neighbours joined by edges that are pairwise at $120^\circ$.
It is then clear that we can decompose $G$ into a union of full Steiner trees (trees in which each non-Steiner point has degree $1$) such that any two full Steiner trees intersect in only finitely many points.
Each of these trees is constructible by the Melzak--Hwang algorithm \cite[Section~1.2.1]{BZ}.
\end{proof}
\begin{corollary}\label{cor:algorithm}
Given finite sets $A$ and $B$ of points with rational coordinates in the Euclidean plane, there is an algorithm that constructs a shortest $(A,B)$-network.
\end{corollary}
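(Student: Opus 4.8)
The plan is to combine Corollary~\ref{cor:main} with an effective bound on the number of Steiner points and a decidability argument for the real ordered field. Here is the outline.

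First I would recall that by the results cited in the introduction, any simple $(A,B)$-network has at most $N := O(|A|+|B|)$ Steiner points (the Euclidean bound of \cite{Alfaro}); this gives an explicit, computable $N$ depending only on $|A|$ and $|B|$. So a shortest $(A,B)$-network is realized by some underlying simple digraph $G$ on at most $|A|+|B|+N$ nodes, and there are only finitely many such digraphs (up to isomorphism) together with finitely many ways to assign the elements of $A$ and $B$ to nodes. The algorithm enumerates all of these combinatorial types. For each fixed combinatorial type, the task is to decide whether it can be embedded in the plane as an $(A,B)$-network, and if so to compute the infimum of its length and an optimal embedding.

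For a fixed combinatorial type, the coordinates of the Steiner points are $2k$ real unknowns (where $k$ is the number of Steiner points), the coordinates of $A\cup B$ are given rationals, and the length is $\sum \sqrt{(x_i-x_j)^2+(y_i-y_j)^2}$ over the edges. Introducing one extra variable $\ell_e$ per edge with the polynomial constraint $\ell_e^2=(x_i-x_j)^2+(y_i-y_j)^2$ and $\ell_e\ge 0$, the statement ``there is an embedding of this type with total length $\le t$'' becomes a first-order formula over the ordered field of real numbers with rational parameters (the parameter $t$ being another variable we then minimize). By Tarski's quantifier-elimination / the decidability of the first-order theory of $(\bR,+,\cdot,\le,0,1)$ — or more efficiently by Renegar's algorithm for the existential theory of the reals — one can decide feasibility, and by a further optimization over this theory (again expressible in the first-order language) one can compute the minimum length of that type as an algebraic number, described e.g. by an isolating interval and a defining polynomial. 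Taking the minimum over all finitely many combinatorial types yields the length of a shortest $(A,B)$-network, together with the combinatorial type (and, via the same quantifier-elimination machinery, algebraic coordinates for an optimal embedding) achieving it. Alternatively, and more in the spirit of Corollary~\ref{cor:main}, once the optimal combinatorial type is identified one can invoke that corollary: the optimal network decomposes into full Steiner trees, each constructible by the Melzak--Hwang algorithm, whose ruler-and-compass steps translate into exact computations in a real quadratic tower over $\bR$, hence are algorithmic.

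The step I expect to be the main obstacle — or at least the one needing the most care in writing — is making precise the reduction of ``compute a shortest network'' to a decision problem: one must handle that the infimum over embeddings of a fixed type is attained (which follows from the compactness argument already invoked for existence, after noting that Steiner points may be taken inside the convex hull of $A\cup B$), and one must phrase the minimization, not merely feasibility, inside the decidable theory. Both are standard once set up, so the proof is short: enumerate finitely many combinatorial types, for each express ``realizable as an $(A,B)$-network of length $\le t$'' as a sentence in the first-order theory of the reals with rational coefficients, apply the decidability (and effective quantifier elimination) of that theory to extract the optimal $t$ and an optimal realization, and output the best type. We emphasize that the resulting algorithm is wildly impractical, but its existence is all that is claimed.
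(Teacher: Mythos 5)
Your proposal is correct, and it is worth noting that your primary route differs from the one the paper has in mind. The paper gives no separate proof of this corollary: it is meant to follow from Theorem~\ref{thm:main} together with Corollary~\ref{cor:main}, i.e.\ one enumerates the finitely many combinatorial types (which requires exactly the explicit $O(|A|+|B|)$ bound on Steiner points you invoke), and for each type the candidate optimum is straightedge-and-compass constructible via the decomposition into full Steiner trees and the Melzak--Hwang algorithm, so its coordinates and length lie in a tower of quadratic extensions of the field generated by the (rational) input coordinates, allowing exact computation and comparison -- this is precisely your ``alternative'' second argument (note the tower is over $\mathbb{Q}$, not over $\bR$ as you wrote). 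Your main argument via the decidability of the first-order theory of the reals (Tarski/Renegar) is a genuinely different and in some ways more robust route: it needs only the Steiner-point bound and none of the structural results of the paper, it handles the minimization and the extraction of algebraic optimal coordinates uniformly, and it would work verbatim for other norms with semialgebraic unit balls; what it gives up is any use of the paper's characterization, which is the content the paper wants the corollary to showcase. Your attention to attainment of the infimum for a fixed type (restricting Steiner points to the convex hull of $A\cup B$ and using compactness) and to expressing minimization rather than mere feasibility in the decidable theory addresses the only real technical points; both are handled adequately.
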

We leave open the following problems.
\begin{problem}
Find an algorithm that can feasibly compute shortest directed networks in the Euclidean plane, at least for a small number of sources and sinks.
\end{problem}
The main difficulty in finding an algorithm lies in the enumeration of all possible digraph structures.
Note that if there is only one source or one sink, then a shortest network has to be a minimum Steiner tree, which can be computed with the GeoSteiner algorithm \cite[Section~1.4]{BZ}.

\begin{problem}
Find a characterization of the local structure of sources or sinks of shortest directed networks in the Euclidean plane analogous to that for Steiner points in Theorem~\ref{thm:main}.
\end{problem}
Note that such a characterization is known in the undirected case: In a Steiner minimal tree, a given node either has degree $3$ with all angles between incident edges equal to $120^\circ$, or has degree $2$ with the angle between the two edges $\geq 120^\circ$, or has degree $1$ \cite[Theorem~1.2]{BZ}.

\begin{problem}
Find characterizations of the local structure of nodes (Steiner points, sources or sinks) of shortest directed networks in higher-dimensional Euclidean space analogous to that for Steiner points in Theorem~\ref{thm:main}.
\end{problem}
Corollary~\ref{cor:FT3} below gives some partial results on the above two problems.

\begin{problem}\label{prob}
Find characterizations of the local structure of nodes (Steiner points, sources or sinks) of shortest directed networks in other normed planes and spaces.
\end{problem}
The undirected case of Problem~\ref{prob}, namely to characterize the local structure of Steiner points and terminals in Steiner minimal trees in normed planes, is known \cite{SwaNetworks}.

We also draw attention to the following attractive conjecture of Alfaro.
\begin{conjecture}[Alfaro \cite{Alfaro-Thesis}]
In the Euclidean plane, suppose that the set $A$ of sources and set $B$ of sinks are the same.
Then a shortest $(A,B)$-network does not have Steiner points and is a union of cycles.
\end{conjecture}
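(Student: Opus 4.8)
\medskip
\noindent\textbf{Towards a proof.}
Assume $\abs{A}\ge 2$, the case $\abs{A}\le 1$ being trivial. Among all shortest $(A,A)$-networks, let $G$ be one with the fewest edges; as usual we may take it simple. The first step is to observe that, when $A=B$, the problem is really about strong connectivity. Every edge of $G$ lies on some $(a,b)$-path (otherwise it could be deleted, contradicting the choice of $G$), and since $A=B$ there is also a $(b,a)$-path, so every edge of $G$ lies on a directed cycle; consequently every edge lies inside a single strong component, and since $A$ lies in one strong component while every edge lies on an $(a,b)$-path through $A$, there are no edges outside it. Hence $G$ is strongly connected. Conversely, any strongly connected geometric digraph whose node set contains $A$ is an $(A,A)$-network, so in comparing $G$ with competitors it suffices to work with strongly connected digraphs; in particular $G$ is edge-minimal strongly connected. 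Finally, reversing every edge of $G$ yields a strongly connected geometric digraph of the same length, so $G^{\mathrm{rev}}$ is again a shortest $(A,A)$-network, and any argument below may be applied to $G$ or to $G^{\mathrm{rev}}$.

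\medskip
The core of the proof is to rule out Steiner points, which we attempt by an induction on their number using the local classification in Theorem~\ref{thm:main}. Consider first a Steiner point $s$ of one of the types with $\deg^-(s)\ne\deg^+(s)$, that is, $(1,2)$, $(2,1)$, $(2,3)$ or $(3,2)$; passing to $G^{\mathrm{rev}}$ if necessary, assume $\deg^-(s)<\deg^+(s)$. Here the plan is to exploit the bottleneck at $s$: every directed cycle through $s$ enters through one of the few in-edges and leaves through one of the out-edges, so each ``surplus'' out-edge of $s$ lies only on cycles that re-enter $s$ through the same small set of in-edges. This pins down the path structure around $s$ enough to exhibit either a strictly shorter replacement of the star at $s$, or an edge whose deletion keeps $G$ strongly connected --- contradicting edge-minimality; the numerical input is that the $120^\circ$ angles make the Steiner star locally optimal, which is incompatible with also carrying the redundancy forced by the surplus edges. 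For the remaining \emph{balanced} types --- $(2,2)$(a), $(2,2)$(b) and $(3,3)$ --- one pairs each incoming edge of $s$ with an outgoing one on a common cycle; using the collinearity of opposite edges at $s$, the aim is to splice these cycles together through short connectors replacing $s$, showing that the total length does not increase while the digraph stays strongly connected, and then re-minimise. The point that must be used is that, because $A=B$, such splicing is available on \emph{both} sides of $s$; this is exactly what fails in the asymmetric examples of Fig.~\ref{fig33}.

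\medskip
Once $G$ has no Steiner points, every node lies in $A$, and it remains to show that $G$ is \emph{balanced} (equal in- and out-degree at every node), since a balanced digraph is automatically a union of edge-disjoint directed cycles --- which is the conclusion. If $G$ were not balanced, a flow decomposition of its all-ones edge indicator would contain a directed path $P$, edge-disjoint from the cyclic part, running from a node of excess out-degree to one of excess in-degree. The plan is to argue that reversing $P$ (or a suitable sub-path), which leaves the length unchanged, creates redundancy --- two routes between some pair of nodes --- so that some edge of $G$ can be deleted while strong connectivity is retained, contradicting edge-minimality. The model for this step is the elementary case of two directed $u,v$-arcs together with one $v,u$-arc, which after reversing one arc admits a deletion.

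\medskip
I expect the main obstacle to be the balanced Steiner-point cases, and above all $\deg(s)=(3,3)$: this configuration genuinely occurs in a shortest $(A,B)$-network when $A\ne B$ (Fig.~\ref{fig33}), so the argument must make essential use of $A=B$, and the surgery removing $s$ has to be carried out without destroying the global strong connectivity that $A=B$ forces --- reconciling the local length bookkeeping with this global constraint is the delicate part. A secondary difficulty is that no characterization of the local structure at sources and sinks is available, so the balancedness argument of the previous paragraph must avoid any angle condition at nodes of $A$.
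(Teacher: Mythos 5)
This statement is not a theorem of the paper: it is stated there as an open conjecture (attributed to Alfaro), with no proof given, so there is nothing to compare your argument against --- and your text, as you yourself signal (``Towards a proof'', ``the plan is'', ``the aim is'', ``I expect the main obstacle to be''), is a programme rather than a proof. The two steps that carry all the content are left unestablished. First, the elimination of Steiner points: for the unbalanced types you only assert that the bottleneck at $s$ ``pins down the path structure enough'' to produce a shortening or a deletable edge, with no construction and no length computation; for the balanced types, especially $\deg(s)=(3,3)$, you propose to ``splice cycles through short connectors replacing $s$'' without increasing length, but no such surgery is exhibited, and Fig.~\ref{fig33} shows this configuration is genuinely length-optimal locally, so any removal argument must be global and must quantify how the hypothesis $A=B$ is used --- exactly the point you defer. (Note also that the degree-$4$ cases are harmless: by Theorem~\ref{thm:main}\ref{22} such a point is just a crossing or a pair of collinear in--out edges and can be dissolved at equal length; the real issue is degrees $(1,2)$, $(2,1)$, $(2,3)$, $(3,2)$, $(3,3)$.)

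Second, the balancedness step does not follow from the reduction you set up. Edge-minimal strong connectivity alone does not force balance: a ``theta'' digraph with two internally disjoint directed $u$-to-$v$ paths and one $v$-to-$u$ path is edge-minimal strongly connected yet unbalanced, so the conclusion must come from metric minimality, not from combinatorial minimality. Your proposed move --- reverse a path of the flow decomposition and then delete a redundant edge --- is unsupported: reversing a directed path in a strongly connected digraph can destroy strong connectivity, and even when it does not, no argument is given that a deletable edge appears; moreover, since no local structure is known at the nodes of $A$ (the paper lists this as an open problem), you have no angle conditions there to leverage. As it stands the proposal records plausible reductions (strong connectivity, reversal symmetry, reduction to balancedness) but leaves the conjecture exactly as open as the paper does.
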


\section{Basic lemmas}\label{sec:basic}
By a \emph{norm} defined on $\bR^d$, we mean a function $\norm{\cdot}\colon\bR^d\to\bR$ such that $\norm{x}\geq 0$ for all $x\in\bR^d$, $\norm{\lambda x}=\abs{\lambda}\norm{x}$ for all $\lambda\in\bR$ and $x\in\bR^d$, and $\norm{x+y}\leq\norm{x}+\norm{y}$ for all $x,y\in\bR^d$.
The unit ball of a norm is defined to be the set $\setbuilder{x\in\bR^d}{\norm{x}\leq 1}$.
We will compare norms, and to do this, it is useful to keep in mind that $\norm{x}_a\geq\norm{x}_b$ for all $x\in\bR^d$ if and only if the corresponding unit balls $B_a\subseteq B_b$.

We denote the $d$-dimensional Euclidean space by $\bE^d$, that is, the $d$-dimensional normed space with norm \[\norm{(x_1,\dots,x_d)}_2 = \sqrt{x_1^2+\dots+x_d^2}.\]
We will work with a variety of norms in the plane $\bR^2$ apart from the Euclidean norm $\norm{\cdot}_2$ with unit ball $B_2$.
The $\ell_1$-norm is defined by $\norm{\opair{x}{y}}_1=|x|+|y|$.
Its unit ball $B_1$ is the convex hull of the four points $\pm e_1$ and $\pm e_2$, where $e_1=\opair{1}{0}$ and $e_2=\opair{0}{1}$ form the standard unit basis of $\bR^2$.
Since $B_1\subseteq B_2$, it follows that $\norm{x}_2\leq\norm{x}_1$ for all $x\in\bR^2$.
Alternatively,
\[ \norm{\opair{x}{y}}_2^2 = x^2+y^2 \leq \abs{x}^2 + 2\abs{x}\abs{y}+\abs{y}^2 = (\abs{x}+\abs{y})^2 = \norm{\opair{x}{y}}_1^2.\]
For any $\theta\in(0,90^\circ)$ we define the norm $\norm{\cdot}_{1(\theta)}$ on $\bR^2$ by \[\norm{\opair{x}{y}}_{1(\theta)}=\abs{x}\cos\theta+\abs{y}\sin\theta.\]
\begin{lemma}\label{lemma1}
For any $\theta\in(0^\circ,90^\circ)$ and $\opair{x}{y}\in\bR^2$, $\norm{\opair{x}{y}}_2\geq\norm{\opair{x}{y}}_{1(\theta)}$.
\end{lemma}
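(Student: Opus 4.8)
The plan is to recognise this as an instance of the Cauchy--Schwarz inequality. Since both $\norm{\opair{x}{y}}_2$ and $\norm{\opair{x}{y}}_{1(\theta)}$ depend only on $\abs{x}$ and $\abs{y}$, I may assume without loss of generality that $x,y\geq 0$. Then I would apply the Cauchy--Schwarz inequality in $\bE^2$ to the vectors $\opair{x}{y}$ and $\opair{\cos\theta}{\sin\theta}$:
\[
\norm{\opair{x}{y}}_{1(\theta)} = x\cos\theta + y\sin\theta = \ipr{\opair{x}{y}}{\opair{\cos\theta}{\sin\theta}} \leq \norm{\opair{x}{y}}_2\,\norm{\opair{\cos\theta}{\sin\theta}}_2,
\]
and finish by observing that $\norm{\opair{\cos\theta}{\sin\theta}}_2 = \sqrt{\cos^2\theta+\sin^2\theta} = 1$.

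Alternatively, I could argue geometrically using the remark preceding the lemma, which says that $\norm{\cdot}_2\geq\norm{\cdot}_{1(\theta)}$ is equivalent to $B_2\subseteq B_{1(\theta)}$, where $B_{1(\theta)}$ is the unit ball of $\norm{\cdot}_{1(\theta)}$. The set $B_{1(\theta)}$ is the rhombus with vertices $\pm(1/\cos\theta,0)$ and $\pm(0,1/\sin\theta)$, and its side lying in the first quadrant is contained in the line $x\cos\theta + y\sin\theta = 1$, whose Euclidean distance from the origin is exactly $1/\sqrt{\cos^2\theta+\sin^2\theta}=1$. By symmetry the same holds for all four sides, so every point of $B_2$ satisfies $\norm{\cdot}_{1(\theta)}\leq 1$, giving $B_2\subseteq B_{1(\theta)}$ as required; in fact $B_2$ is inscribed in $B_{1(\theta)}$.

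There is essentially no genuine obstacle here: the only point requiring a word of care is the reduction to the first quadrant via absolute values, and (if one wants it) the observation that equality holds precisely when $\opair{x}{y}$ is a nonnegative multiple of $\opair{\cos\theta}{\sin\theta}$ up to signs, i.e.\ exactly at the four points of tangency. I would present the Cauchy--Schwarz argument as the main proof since it is shortest, perhaps noting the geometric picture in a sentence for later use.
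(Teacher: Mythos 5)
Your main argument is exactly the paper's proof: apply Cauchy--Schwarz to $\opair{\abs{x}}{\abs{y}}$ and $\opair{\cos\theta}{\sin\theta}$ and use that the latter is a Euclidean unit vector; the geometric alternative you sketch is also the remark the paper makes right after the lemma. The proposal is correct and essentially identical to the paper's proof.
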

\begin{proof}
By the Cauchy--Schwarz inequality,
\begin{align*}
\norm{\opair{x}{y}}_{1(\theta)} &= \ipr{\opair{\abs{x}}{\abs{y}}}{\opair{\cos\theta}{\sin\theta}}\\
&\leq \norm{\opair{\abs{x}}{\abs{y}}}_2\norm{\opair{\cos\theta}{\sin\theta}}_2 = \norm{\opair{x}{y}}_2.\qedhere
\end{align*}
\end{proof}
The inequality in Lemma~\ref{lemma1} is sharp with equality attained at the points $\opair{\pm\cos\theta}{\pm\sin\theta}$.
This lemma can also be seen by noting that the unit ball $B_{1(\theta)}$ of $\norm{\cdot}_{1(\theta)}$ is a parallelogram 
circumscribing the Euclidean unit ball $B_2$, and touching $B_2$ at the four points $\opair{\pm\cos\theta}{\pm\sin\theta}$.
See Fig.~\ref{fig3} for the case $\theta=60^\circ$, where the four points are labelled $a_2$, $a_3$, $b_2$, $b_3$.
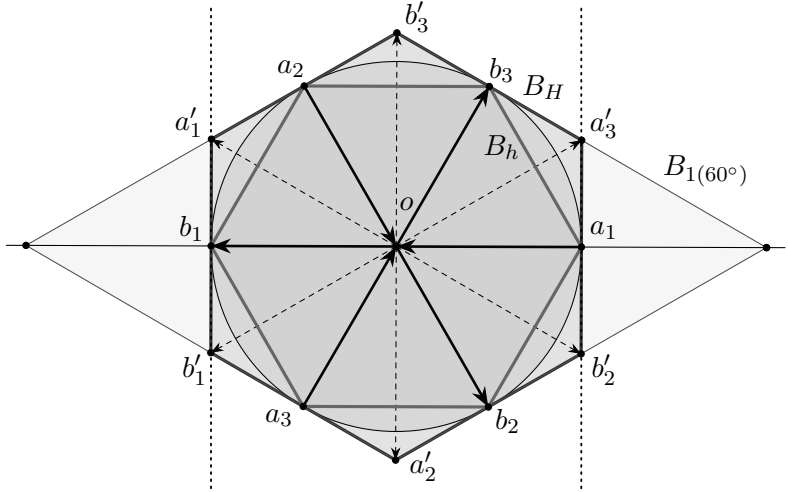
\begin{figure}
\centering
\definecolor{ffqqqq}{rgb}{0.3,0.3,0.3}
\definecolor{fffqqq}{rgb}{0.4,0.4,0.4}
\definecolor{zzttqq}{rgb}{0.2,0.2,0.2}
\begin{tikzpicture}[line cap=round,line join=round,>=Stealth,x=1.0cm,y=1.0cm,scale=0.8]
\fill[line width=1.2pt,color=zzttqq,fill=zzttqq,fill opacity=0.1] (7.931247533212796,2.4145072694285403) -- (4.86,4.2) -- (1.7780941704462743,2.432967980213158) -- (1.7674358741053435,-1.1195567701451448) -- (4.838683407318138,-2.905049500716605) -- (7.920589236871864,-1.1380174809297645) -- cycle;
\draw [fill=black,fill opacity=0.05] (4.84934170365907,0.6474752496416988) circle (3.0765905278232757cm);
\fill[color=ffqqqq,fill=ffqqqq,fill opacity=0.05] (-1.3038116591074507,0.665935960426316) -- (4.86,4.2) -- (11.002495066425595,0.6290145388570779) -- (4.838683407318138,-2.905049500716605) -- cycle;
\draw [line width=1.2pt,color=zzttqq] (7.931247533212796,2.4145072694285403)-- (4.86,4.2);
\draw [line width=1.2pt,color=zzttqq] (4.86,4.2)-- (1.7780941704462743,2.432967980213158);
\draw [line width=1.2pt,color=zzttqq] (1.7780941704462743,2.432967980213158)-- (1.7674358741053435,-1.1195567701451448);
\draw [line width=1.2pt,color=zzttqq] (1.7674358741053435,-1.1195567701451448)-- (4.838683407318138,-2.905049500716605);
\draw [line width=1.2pt,color=zzttqq] (4.838683407318138,-2.905049500716605)-- (7.920589236871864,-1.1380174809297645);
\draw [line width=1.2pt,color=zzttqq] (7.920589236871864,-1.1380174809297645)-- (7.931247533212796,2.4145072694285403);
\draw [domain=-1.6313921210724558:11.310760314574766] plot(\x,{(--4.073536804327942-0.01846071078461864*\x)/6.1531533627665205});
\draw [->,dash pattern=on 2pt off 2pt] (4.849341703659069,0.6474752496416972) -- (7.931247533212796,2.4145072694285403);
\draw [->,dash pattern=on 2pt off 2pt] (4.849341703659069,0.6474752496416972) -- (4.86,4.2);
\draw [->,dash pattern=on 2pt off 2pt] (4.849341703659069,0.6474752496416972) -- (1.7780941704462743,2.432967980213158);
\draw [->,dash pattern=on 2pt off 2pt] (4.849341703659069,0.6474752496416972) -- (1.7674358741053435,-1.1195567701451448);
\draw [->,dash pattern=on 2pt off 2pt] (4.849341703659069,0.6474752496416972) -- (4.838683407318138,-2.905049500716605);
\draw [->,dash pattern=on 2pt off 2pt] (4.849341703659069,0.6474752496416972) -- (7.920589236871864,-1.1380174809297645);
\draw [->,line width=1pt] (3.3190470852231373,3.316483990106579) -- (4.849341703659069,0.6474752496416971);
\draw [->,line width=1pt] (4.849341703659069,0.6474752496416972) -- (6.379636322095001,-2.0215334908231846);
\draw [->,line width=1pt] (7.92591838504233,0.6382448942493879) -- (4.849341703659069,0.6474752496416972);
\draw [->,line width=1pt] (4.849341703659069,0.6474752496416972) -- (1.772765022275809,0.6567056050340065);
\draw [->,line width=1pt] (3.3030596407117407,-2.0123031354308747) -- (4.849341703659069,0.6474752496416971);
\draw [->,line width=1pt] (4.849341703659069,0.6474752496416972) -- (6.395623766606398,3.3072536347142703);
\draw [color=ffqqqq] (-1.3038116591074507,0.665935960426316)-- (4.86,4.2);
\draw [color=ffqqqq] (4.86,4.2)-- (11.002495066425595,0.6290145388570779);
\draw [color=ffqqqq] (11.002495066425595,0.6290145388570779)-- (4.838683407318138,-2.905049500716605);
\draw [color=ffqqqq] (4.838683407318138,-2.905049500716605)-- (-1.3038116591074507,0.665935960426316);
\draw [dotted,line width=0.7pt] (7.924003520629264,-3.3734828343302645) -- (7.924003520629264,4.630641365271997);
\draw [dotted,line width=0.7pt] (1.7707947719780461,-3.3734828343302645) -- (1.7707947719780461,4.630641365271997);
\draw[line width=1.2pt,color=fffqqq,fill=fffqqq,fill opacity=0.05] (7.92591838504233,0.6382448942493879) -- (6.395623766606398,3.3072536347142703) -- (3.3190470852231373,3.316483990106579) -- (1.772765022275809,0.6567056050340065) -- (3.3030596407117407,-2.0123031354308747) -- (6.379636322095001,-2.0215334908231846) -- cycle;
\draw [fill=black] (7.931247533212796,2.4145072694285403) circle (1.5pt);
\draw[color=black] (8.3,2.6941597040779017) node {$a_3'$};
\draw [fill=black] (4.86,4.2) circle (1.5pt);
\draw[color=black] (5.2,4.469267893505823) node {$b_3'$};
\draw [fill=black] (1.7780941704462743,2.432967980213158) circle (1.5pt);
\draw[color=black] (1.4,2.710297051254519) node {$a_1'$};
\draw [fill=black] (1.7674358741053435,-1.1195567701451448) circle (1.5pt);
\draw[color=black] (1.5,-1.4) node {$b_1'$};
\draw [fill=black] (4.838683407318138,-2.905049500716605) circle (1.5pt);
\draw[color=black] (5.3,-3) node {$a_2'$};
\draw [fill=black] (7.920589236871864,-1.1380174809297645) circle (1.5pt);
\draw[color=black] (8.3,-1.4) node {$b_2'$};
\draw [fill=black] (6.395623766606398,3.3072536347142703) circle (1.5pt);
\draw[color=black] (6.6,3.581713798791862) node {$b_3$};
\draw[color=black] (7.3,3.3) node {$B_H$};
\draw [fill=black] (3.3190470852231373,3.316483990106579) circle (1.5pt);
\draw[color=black] (3.1,3.5978511459684794) node {$a_2$};
\draw [fill=black] (1.772765022275809,0.6567056050340065) circle (1.5pt);
\draw[color=black] (1.45,0.95) node {$b_1$};
\draw [fill=black] (3.3030596407117407,-2.0123031354308747) circle (1.5pt);
\draw[color=black] (2.9,-2.2) node {$a_3$};
\draw [fill=black] (6.379636322095001,-2.0215334908231846) circle (1.5pt);
\draw[color=black] (6.7,-2.25) node {$b_2$};
\draw [fill=black] (7.92591838504233,0.6382448942493879) circle (1.5pt);
\draw[color=black] (8.3,0.92) node {$a_1$};
\draw[color=black] (10,1.9) node {$B_{1(60^\circ)}$};
\draw[color=black] (6.6,2.3) node {$B_{h}$};
\draw [fill=black] (4.849341703659069,0.6474752496416972) circle (1.5pt);
\draw[color=black] (5.02,1.35) node {$o$};
\draw [fill=black] (-1.3038116591074507,0.665935960426316) circle (1.5pt);
\draw [fill=black] (11.002495066425595,0.6290145388570779) circle (1.5pt);
\draw [fill=black] (11.002495066425595,0.6290145388570787) circle (1.5pt);
\end{tikzpicture}
\caption{The unit balls of $\norm{\cdot}_h$, $\norm{\cdot}_2$, $\norm{\cdot}_H$ and $\norm{\cdot}_{1(60^\circ)}$}\label{fig3}
\end{figure}

Define the norm \[\norm{\opair{x}{y}}_H = \max\set{\abs{x},\tfrac12\abs{x}+\tfrac{\sqrt{3}}{2}\abs{y}} = \max\set{\abs{x}, \norm{\opair{x}{y}}_{1(60^\circ)}}.\]
Its unit ball $B_H$ is a regular hexagon $a_1'b_1'a_2'b_2'a_3'b_3'$ with vertices $a_1'=-b_2'=\opair{-1}{1/\sqrt{3}}$, $a_2'=-b_3'=\opair{0}{-2/\sqrt{3}}$, $a_3'=-b_1'=\opair{1}{1/\sqrt{3}}$, circumscribing the Euclidean unit circle and touching it at $a_1,a_2,a_3,b_1,b_2,b_3$ (Fig.~\ref{fig3}), where $a_1=-b_1=\opair{1}{0}$, $a_2=-b_2=\opair{-1/2}{\sqrt{3}/2}$, $a_3=-b_3=\opair{-1/2}{-\sqrt{3}/2}$.
Also define the norm
\[\norm{\opair{x}{y}}_h = \max\set{\tfrac{2}{\sqrt{3}}\abs{y},\abs{x}+\tfrac{1}{\sqrt{3}}\abs{y}}. \]
Its unit ball $B_h$ is the regular hexagon $a_1b_3a_2b_1a_3b_2$ inscribed in the Euclidean unit circle (Fig.~\ref{fig3}).
Note that although the normed plane $(\bR^2,\norm{\cdot}_h)$ is not the same as $(\bR^2,\norm{\cdot}_H)$, they are isometric.
\begin{lemma}\label{lemma2}
For any $\opair{x}{y}\in\bR^2$, $\norm{\opair{x}{y}}_h\geq\norm{\opair{x}{y}}_2\geq\norm{\opair{x}{y}}_H$.
\end{lemma}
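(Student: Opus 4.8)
The plan is to prove the two inequalities separately, in each case reducing the claim either to Lemma~\ref{lemma1} or to the geometric relation between the relevant hexagon and the Euclidean unit disc.

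For the right-hand inequality $\norm{\opair{x}{y}}_2\geq\norm{\opair{x}{y}}_H$ I would argue directly from Lemma~\ref{lemma1}. Taking $\theta=60^\circ$ gives $\norm{\opair{x}{y}}_2\geq\norm{\opair{x}{y}}_{1(60^\circ)}$, and trivially $\norm{\opair{x}{y}}_2=\sqrt{x^2+y^2}\geq\abs{x}$; since $\norm{\opair{x}{y}}_H=\max\set{\abs{x},\norm{\opair{x}{y}}_{1(60^\circ)}}$ by definition, taking the larger of these two lower bounds yields the claim. In the language of unit balls this amounts to observing that $B_H=B_{1(60^\circ)}\cap\setbuilder{\opair{x}{y}}{\abs{x}\leq 1}$ and that $B_2$ lies inside both $B_{1(60^\circ)}$ (by Lemma~\ref{lemma1}) and the vertical strip $\setbuilder{\opair{x}{y}}{\abs{x}\leq 1}$, so $B_2\subseteq B_H$.

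For the left-hand inequality $\norm{\opair{x}{y}}_h\geq\norm{\opair{x}{y}}_2$ the slickest route is the geometric one already suggested by Fig.~\ref{fig3}: the unit ball $B_h$ is the convex hull of the six points $a_1,b_3,a_2,b_1,a_3,b_2$, each of which lies on the Euclidean unit circle and hence in the convex set $B_2$; therefore $B_h\subseteq B_2$, which is exactly the desired norm inequality. If instead one prefers a self-contained computation, one can use homogeneity together with the invariance of both norms under $x\mapsto-x$ and $y\mapsto-y$ to assume $x,y\geq 0$, and then split according to which term attains the maximum defining $\norm{\cdot}_h$. When $y\geq\sqrt{3}\,x$ one has $\norm{\opair{x}{y}}_h=\tfrac{2}{\sqrt3}y$ and the claim squares to $\tfrac43 y^2\geq x^2+y^2$; when $y\leq\sqrt{3}\,x$ one has $\norm{\opair{x}{y}}_h=x+\tfrac{1}{\sqrt3}y$ and the claim squares to $\tfrac{2}{\sqrt3}xy\geq\tfrac23 y^2$; both reduce to the defining inequality of the case, namely $y^2\geq 3x^2$ respectively $y\leq\sqrt3\,x$.

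I do not expect a genuine obstacle: the lemma is essentially the statement that $B_h$ and $B_H$ are, respectively, inscribed in and circumscribed about the Euclidean unit circle, combined with Lemma~\ref{lemma1}. The only point deserving a line of care is the case distinction (or, in the geometric proof, the appeal to convexity of $B_2$) for the first inequality; it is also worth recording in passing that both inequalities are sharp, with equality attained exactly at the six Euclidean contact points $a_1,a_2,a_3,b_1,b_2,b_3$.
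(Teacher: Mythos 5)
Your proposal is correct and follows essentially the same route as the paper: the bound $\norm{\cdot}_2\geq\norm{\cdot}_H$ via Lemma~\ref{lemma1} with $\theta=60^\circ$ together with $\sqrt{x^2+y^2}\geq\abs{x}$, and the bound $\norm{\cdot}_h\geq\norm{\cdot}_2$ by the same two-case computation (equivalently the inclusion $B_h\subseteq B_2\subseteq B_H$, which the paper also records as an alternative justification).
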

\begin{proof}
For the first inequality, note that if $3x^2\leq y^2$ then $x^2+y^2\leq \frac{4}{3}y^2$, while if $3x^2\geq y^2$, then $x^2+y^2\leq (\abs{x}+\frac{1}{\sqrt{3}}\abs{y})^2$.
For the second inequality, note that by Lemma~\ref{lemma1}, $\norm{\opair{x}{y}}\geq\norm{\opair{x}{y}}_{1(60^\circ)}$, and trivially, $\norm{\opair{x}{y}}=\sqrt{x^2+y^2}\geq\abs{x}$.
\end{proof}
This lemma also follows from the fact that the unit balls $B_h\subseteq B_2\subseteq B_H$.
\begin{lemma}\label{lemma4}
For any $a,b\in\bR^2$, if $a'$ and $b'$ are the orthogonal projections of $a$ and $b$ onto the $x$-axis, then $\norm{a-b}_H\geq\norm{a'-b'}_H=\norm{a'-b'}_2$.
\end{lemma}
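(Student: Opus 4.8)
The plan is to reduce everything to the explicit formula $\norm{\opair{x}{y}}_H = \max\set{\abs{x},\tfrac12\abs{x}+\tfrac{\sqrt3}{2}\abs{y}}$ recorded above. Write $a=\opair{a_1}{a_2}$ and $b=\opair{b_1}{b_2}$, so that $a-b=\opair{a_1-b_1}{a_2-b_2}$ while $a'-b'=\opair{a_1-b_1}{0}$. The whole statement then follows by inspecting this formula on the $x$-axis and comparing with the full vector.

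First I would dispose of the equality $\norm{a'-b'}_H=\norm{a'-b'}_2$. For a point $\opair{x}{0}$ on the $x$-axis the two quantities inside the maximum defining $\norm{\cdot}_H$ are $\abs{x}$ and $\tfrac12\abs{x}$, so $\norm{\opair{x}{0}}_H=\abs{x}=\norm{\opair{x}{0}}_2$; equivalently, both unit balls $B_H$ and $B_2$ meet the $x$-axis in the same segment $[-1,1]\times\set{0}$. Applying this with $x=a_1-b_1$ gives $\norm{a'-b'}_H=\abs{a_1-b_1}=\norm{a'-b'}_2$. For the inequality $\norm{a-b}_H\ge\norm{a'-b'}_H$, I would simply note that $\abs{a_2-b_2}\ge 0$ forces
\[
\norm{a-b}_H=\max\set{\abs{a_1-b_1},\ \tfrac12\abs{a_1-b_1}+\tfrac{\sqrt3}{2}\abs{a_2-b_2}}\ \ge\ \abs{a_1-b_1}\ =\ \norm{a'-b'}_H .
\]

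A slightly more conceptual phrasing of the same argument, which is perhaps worth including, is that the orthogonal projection $P$ onto the $x$-axis maps the regular hexagon $B_H$ onto its horizontal diagonal $[-1,1]\times\set{0}\subseteq B_H$, and hence $\norm{Pv}_H\le\norm{v}_H$ for every $v\in\bR^2$; combined with the observation that $\norm{\cdot}_H$ and $\norm{\cdot}_2$ agree on the $x$-axis, this gives the lemma. There is essentially no obstacle here: the statement is immediate from the closed form of $\norm{\cdot}_H$, the only point requiring a moment's care being that the maximum defining $\norm{\opair{x}{0}}_H$ is attained by the term $\abs{x}$ rather than by $\tfrac12\abs{x}$.
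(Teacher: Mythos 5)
Your proof is correct and is essentially the paper's own argument: the paper's one-line proof likewise observes $\norm{\opair{x}{y}}_H\geq\abs{x}=\norm{\opair{x}{0}}_H=\norm{\opair{x}{0}}_2$ and applies it to $x=a_1-b_1$, $y=a_2-b_2$. Your extra remark about the projection of $B_H$ onto its horizontal diagonal is a harmless reformulation of the same fact.
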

\begin{proof}
Note that $\norm{\opair{x}{y}}_H\geq\abs{x}=\norm{\opair{x}{0}}_H=\norm{\opair{x}{0}}_2$.
\end{proof}
\begin{lemma}\label{lemma3}
Suppose that the unit ball $B$ of the norm $\norm{\cdot}$ on $\bR^2$ is a polygon with edges $p_ip_{i+1}$, $i=1,\dots,2n-1$, where $p_{n+i}=-p_i$, $i=1,\dots, n$ and $p_{2n}=p_0$.
Then for any segment $vw$ in the plane there exists a point $c$ such that the segments $vc$ and $cw$ are each parallel to $p_i$ and $p_{i+1}$ for some $i=1,\dots,n-1$ and $\norm{v-w}=\norm{v-c}+\norm{c-w}$.
\end{lemma}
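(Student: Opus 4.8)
\medskip

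The plan is to use the fact that a norm whose unit ball is a polygon is linear on each cone spanned by two consecutive vertices; decomposing $v-w$ into its two components along the bounding directions of whichever such cone it occupies then makes the triangle inequality an equality automatically, which is exactly the asserted additivity $\norm{v-w}=\norm{v-c}+\norm{c-w}$.

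First I would fix notation for a single edge. For each $i$, the edge $p_ip_{i+1}$ spans a supporting line of $B$, so there is a linear functional $f_i\colon\bR^2\to\bR$ with $f_i(x)\leq 1$ for all $x\in B$ and $f_i\equiv 1$ on the line through $p_i$ and $p_{i+1}$; since both endpoints lie on this line, $f_i(p_i)=f_i(p_{i+1})=1$. Write $C_i:=\setbuilder{\alpha p_i+\beta p_{i+1}}{\alpha,\beta\geq 0}$ for the cone over this edge. The key step is then to show that $\norm{\cdot}$ agrees with $f_i$ on $C_i$: for any nonzero $u\in C_i$ the point $u/\norm{u}$ lies on $\partial B\cap C_i$, which is precisely the edge $p_ip_{i+1}$, so $f_i(u/\norm{u})=1$, that is, $\norm{u}=f_i(u)$. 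In particular, for $u=\alpha p_i+\beta p_{i+1}$ with $\alpha,\beta\geq 0$, using $\norm{p_i}=\norm{p_{i+1}}=1$ and linearity of $f_i$, I obtain the additivity $\norm{u}=f_i(u)=\alpha+\beta=\norm{\alpha p_i}+\norm{\beta p_{i+1}}$.

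With this in hand the lemma follows quickly. The $2n$ rays through $p_1,\dots,p_{2n}$ tile the plane into the cones $C_1,\dots,C_{2n}$ (indices read cyclically), so $v-w$ lies in one of them, say $v-w\in C_i$. As consecutive vertices of a polygon, $p_i$ and $p_{i+1}$ are linearly independent, so I can write $v-w=\lambda p_i+\mu p_{i+1}$ with $\lambda,\mu\geq 0$ and set $c:=w+\mu p_{i+1}$. Then $v-c=\lambda p_i$ and $c-w=\mu p_{i+1}$ are parallel to $p_i$ and $p_{i+1}$, and the additivity just proved gives $\norm{v-w}=\lambda+\mu=\norm{v-c}+\norm{c-w}$, as required.

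The step needing the most care is reducing the index $i$ to the stated range $1,\dots,n-1$. Since the segment $vw$ is unoriented I may replace $v-w$ by $w-v$, and the central symmetry $p_{n+i}=-p_i$ identifies $C_{n+i}$ with $-C_i$; these two moves fold every cone onto one whose bounding directions are, as unoriented lines, a consecutive pair among $\set{p_1,p_2},\dots,\set{p_{n-1},p_n}$. The one genuinely delicate case is the ``wrap-around'' cone between $p_n$ and $p_{n+1}=-p_1$, which must be matched into this list through the cyclic and antipodal identifications. I expect this bookkeeping, rather than any analytic difficulty, to be the main obstacle, since all the geometric content sits in the linearity of the norm on each cone.
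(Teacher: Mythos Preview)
Your approach is essentially the paper's: locate the edge $p_ip_{i+1}$ containing the unit vector $(w-v)/\norm{w-v}$, write $w-v=\alpha p_i+\beta p_{i+1}$ with $\alpha,\beta\geq 0$, and set $c=v+\alpha p_i$. You are more explicit than the paper in justifying the additivity via a supporting functional (the paper simply says ``it is then easy to see that $c$ is the required point''), and your worry about reducing the index to the range $1,\dots,n-1$ is well-founded but moot---the paper's own proof lets $i$ run over $1,\dots,2n-1$ and never performs this reduction, since only the unoriented directions of the $p_i$ matter in the applications.
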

\begin{proof}
Let $i=1,\dots,2n-1$ be such that the unit vector $\norm{w-v}^{-1}(w-v)$ lies on the edge $p_ip_{i+1}$ of $B$.
Then $w-v = \alpha p_i + \beta p_{i+1}$ for some $\alpha,\beta\geq 0$.
It is then easy to see that $c=w-\beta p_{i+1} = v+\alpha p_i$ is the required point.
\end{proof}
We call the union of the segments $vc\cup cw$ in the above lemma a \emph{broken segment} that has the same length as $vw$.
Likewise, we call the directed path consisting of the edges $\arc{v}{c}$ and $\arc{c}{w}$ a \emph{broken edge}.
Note that if we replace the directed edge $\arc{v}{w}$ in an $(A,B)$-network by the broken edge $\arc{v}{c}$, $\arc{c}{w}$, then we still have an $(A,B)$-network of the same length.

We next note that only the directions of the directed edges incident to a vertex are important in characterizing the local structure of a node.
\begin{lemma}\label{lem:local}
Let $A=\{a_1,\dots,a_m\}$ and $B=\{b_1,\dots,b_n\}$ be sources and sinks in a $d$-dimensional normed space $X$.
Suppose that the $(A,B)$-network $G$ with directed edge set $E=\setbuilder{\arc{a_i}{o}}{a_i\neq o, i=1,\dots,m}\cup\setbuilder{\arc{o}{b_j}}{b_j\neq o, j=1,\dots,n}$ is a shortest $(A,B)$-network.
Let $a_i'$ be any point on the ray from $o$ to $a_i$ and $b_j'$ any point on the ray from $o$ to $b_j$.
Let $A'=\{a_1',\dots,a_m'\}$ and $B'=\{b_1',\dots,b_n'\}$.
Then the network $G'$ with directed edge set $E'= \setbuilder{\arc{a_i'}{o}}{a_i'\neq o, i=1,\dots,m}\cup\setbuilder{\arc{o}{b_j'}}{b_j'\neq o, j=1,\dots,n}$ is a shortest $(A',B')$-network.
\end{lemma}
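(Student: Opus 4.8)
The plan is to argue by contradiction, using that $G$ is shortest to exclude any $(A',B')$-network strictly shorter than $G'$. The engine is an \emph{augmentation}: from a hypothetical competitor $H'$ for $(A',B')$ we splice in the radial segments joining each $a_i$ to $a_i'$ and each $b_j$ to $b_j'$ to obtain a competitor $H$ for $(A,B)$, and we control the length change using collinearity with $o$. As written, this only works cleanly when each target point is \emph{no farther} from $o$ than its original, so the first move is a reduction to that situation.

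\textbf{Reduction.} Parametrise the rays as $a_i' = o + t_i(a_i-o)$ and $b_j' = o + s_j(b_j - o)$ with $t_i, s_j \ge 0$. Let $\phi(x) = o + \lambda(x - o)$ be the homothety with centre $o$ and ratio $\lambda \ge \max\{1, t_1,\dots,t_m,s_1,\dots,s_n\}$. Since $\phi$ is a bijection of the plane that scales every length by $\lambda$ and carries $(A,B)$-networks bijectively onto $(\phi(A),\phi(B))$-networks, it maps shortest networks to shortest networks; in particular the star $\phi(G)$ centred at $o$ on $\phi(A),\phi(B)$ is a shortest $(\phi(A),\phi(B))$-network. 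By the choice of $\lambda$, each $a_i'$ lies on the segment from $o$ to $\phi(a_i)$ and each $b_j'$ on the segment from $o$ to $\phi(b_j)$. So it suffices to prove the lemma with $(A,B)$ replaced by $(\phi(A),\phi(B))$; that is, we may assume $a_i'$ lies on the segment from $o$ to $a_i$ \emph{between} $o$ and $a_i$, and likewise for $b_j'$.

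\textbf{Main argument.} Assume now $a_i'$ lies on the segment $[o,a_i]$ and $b_j'$ on $[o,b_j]$, and suppose for contradiction that some $(A',B')$-network $H'$ has $\norm{H'} < \norm{G'}$. Let $H$ be $H'$ together with the directed edges $\arc{a_i}{a_i'}$ for $a_i\neq a_i'$ and $\arc{b_j'}{b_j}$ for $b_j\neq b_j'$. Then $H$ is an $(A,B)$-network: for $a_i\in A$ and $b_j\in B$, concatenate $\arc{a_i}{a_i'}$, an $(a_i',b_j')$-path of $H'$, and $\arc{b_j'}{b_j}$ (omitting the first or last edge when $a_i=a_i'$ or $b_j=b_j'$; when $a_i'=o$ the required $(o,b_j')$-path exists since $o\in A'$). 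Moreover $\norm{H} \le \norm{H'} + \sum_i \norm{a_i - a_i'} + \sum_j \norm{b_j - b_j'}$. Because $a_i'$ is collinear with and between $o$ and $a_i$ we have $\norm{a_i - a_i'} = \norm{a_i - o} - \norm{a_i' - o}$, and similarly $\norm{b_j - b_j'} = \norm{b_j - o} - \norm{b_j' - o}$; since $\norm{G} = \sum_i \norm{a_i - o} + \sum_j \norm{b_j - o}$ and $\norm{G'} = \sum_i \norm{a_i' - o} + \sum_j \norm{b_j' - o}$, summing gives $\sum_i \norm{a_i - a_i'} + \sum_j \norm{b_j - b_j'} = \norm{G} - \norm{G'}$. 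Hence $\norm{H} \le \norm{H'} + \norm{G} - \norm{G'} < \norm{G}$, contradicting that $G$ is a shortest $(A,B)$-network.

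The only genuinely delicate point is the reduction: for target points lying \emph{beyond} their originals the augmentation adds more length than $\norm{G} - \norm{G'}$ can absorb, so the homothety trick (pre-scaling the terminals outward so that every target becomes an inward perturbation) is what makes the statement go through. The remaining degenerate cases --- some $a_i$ or $a_i'$ equal to $o$, or coincidences among the $a_i',b_j'$ --- require no extra care, since the length identities above and the path concatenations remain valid (a term equal to $o$ simply contributes $0$, and repeated points only shrink the sets $A',B'$).
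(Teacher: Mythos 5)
Your proof is correct and follows essentially the same route as the paper: reduce by a scaling (you dilate $A,B$ outward, the paper shrinks $A',B'$ inward, which is the same trick) so that each $a_i'$, $b_j'$ lies between $o$ and its terminal, and then splice a hypothetically shorter $(A',B')$-network together with the radial segments into $G$ to contradict its minimality. You merely spell out the length bookkeeping that the paper's two-line proof leaves implicit.
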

\begin{proof}
By scaling, we may assume that each $\norm{a_i'}\leq\norm{a_i}$ and each $\norm{b_j'}\leq\norm{b_j}$.
If $G'$ is not a shortest $(A,B)$-network, then $G$ can be shortened by replacing the part of $G$ that coincides with $G'$ by a shorter network, which gives a contradiction.
\end{proof}

\begin{lemma}\label{lem:convex}
All vertices and directed edges of a shortest $(A,B)$-network in Euclidean space $\bE^d$ are contained in the convex hull of $A \cup B$.
\end{lemma}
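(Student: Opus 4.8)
The plan is to argue by contradiction: suppose some vertex or directed edge of a shortest $(A,B)$-network $G$ lies outside $C:=\conv(A\cup B)$, and construct a strictly shorter $(A,B)$-network. The key tool is the nearest-point projection $\pi\colon\bR^d\to C$, which is well defined since $C$ is a nonempty closed convex set, and which is a contraction: $\norm{\pi(x)-\pi(y)}\leq\norm{x-y}$ for all $x,y$. Apply $\pi$ to every vertex of $G$, mapping each directed edge $\arc{x}{y}$ to the (possibly degenerate) directed edge $\arc{\pi(x)}{\pi(y)}$. Since $\pi$ fixes every point of $A\cup B$, the resulting network $\pi(G)$ is still an $(A,B)$-network: every $(a,b)$-path in $G$ maps to a walk from $a$ to $b$ in $\pi(G)$, which contains an $(a,b)$-path. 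Because $\pi$ is a contraction, $\norm{\pi(G)}\leq\norm{G}$, so $\pi(G)$ is also a shortest $(A,B)$-network, and in fact $\norm{\arc{\pi(x)}{\pi(y)}}=\norm{\arc{x}{y}}$ for every edge.

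Next I would extract strictness. For the projection $\pi$ onto a convex set in Euclidean space, the contraction is strict unless the segment $xy$ already lies in a very restricted position relative to $C$; more precisely, if $x\notin C$ then $\norm{\pi(x)-y}<\norm{x-y}$ unless $y$ lies on the ray from $\pi(x)$ through $x$ beyond $x$, i.e. $\pi(x)$ separates $\pi(x)+\bR_{\geq0}(x-\pi(x))$ so that moving $x$ to $\pi(x)$ strictly shortens the segment to any point of $C$ (and to any point whose projection is $\pi(x)$). The upshot is: if $\norm{\pi(G)}=\norm{G}$ with some vertex $x\notin C$, then every edge at $x$ must be collinear with the segment $x\pi(x)$ and point "inward", which forces $\deg^-(x)$ and $\deg^+(x)$ to be incompatible with the simple-network condition (an outside Steiner point would need both an incoming and an outgoing edge, but all incident edges lie on one ray from $\pi(x)$ and must be directed from the far point toward $C$, impossible to have both orientations). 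For a source or sink outside $C$ the definition of $C$ is already violated. This contradiction shows all vertices lie in $C$, and then convexity of $C$ gives that all edges (segments between vertices) lie in $C$ as well.

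The main obstacle I anticipate is making the strictness argument clean: the contraction $\pi$ is only weakly length-nonincreasing in general, so one must carefully characterize the equality case of $\norm{\pi(x)-\pi(y)}=\norm{x-y}$ and then combine it with the simplicity hypothesis on Steiner points (from the definition of shortest networks, we may assume $G$ is simple) to rule out an exterior vertex. An alternative that sidesteps the equality analysis is to perturb: if $G$ has a vertex strictly outside $C$, first move it a small amount toward $C$, which by a supporting-hyperplane argument strictly decreases at least one incident edge length while not increasing the others, contradicting minimality directly; this only needs that $G$ is a geometric digraph with finitely many vertices, which is guaranteed. I would present the projection argument for the "no increase" half and the local perturbation for strictness.
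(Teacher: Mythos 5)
Your first step (the nearest-point projection $\pi$ onto $C=\conv(A\cup B)$ is nonexpansive, fixes $A\cup B$, and maps paths to walks) is fine, but by itself it only shows that \emph{some} shortest $(A,B)$-network lies in $C$; the lemma asserts this for \emph{every} shortest network (and it is used later in that strength), so everything hinges on the strictness step, and both versions you offer for it have genuine gaps. First, your equality-case characterization is wrong: for the metric projection in Euclidean space one has $\norm{\pi(x)-\pi(y)}_2=\norm{x-y}_2$ if and only if $x-\pi(x)=y-\pi(y)$, not only when the edge at $x$ is collinear with $x\,\pi(x)$. For example, if $C$ is a square and $x,y$ are two exterior Steiner points at the same height above one side, joined by an edge parallel to that side, then $\pi$ preserves that edge's length although the edge is orthogonal to $x-\pi(x)$; so length-equality of $\pi(G)$ does not force the local picture you describe, and the appeal to simplicity does not apply. (The projection route can be repaired: from $x-\pi(x)=y-\pi(y)$ on every edge and the fact that in a shortest network every edge lies on a source--sink path, the underlying graph is connected and contains $A\cup B$, whose displacement is zero, so all displacements vanish -- but this connectivity argument is the missing ingredient, and you did not supply it.) Second, the perturbation fallback is also false as stated: if a vertex $v$ outside $C$ has a neighbour lying even further from $C$ (or at the same ``level''), moving $v$ a little towards $C$ \emph{increases} that incident edge, so it is not true that the move ``strictly decreases at least one incident edge length while not increasing the others''; you must first select an extremal exterior node (e.g.\ one maximizing the distance to $C$, or an extreme point of the convex hull of all nodes) so that all other nodes lie weakly on the inner side, and even then the equality cases need handling.

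For comparison, the paper avoids any equality analysis: it reduces to a Steiner point $s\notin\conv(A\cup B)$ that can be strictly separated from all other nodes by a hyperplane $H$, and then orthogonally projects $s$ together with the portions of its incident edges between $s$ and $H$ onto $H$; each such portion meets $H$ non-parallelly, so the modification is strictly shortening, contradicting minimality. That choice of a separating hyperplane is exactly the extremality you would need to make your perturbation argument work.
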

\begin{proof}
It is sufficient to prove that each Steiner point of the $(A,B)$-network $G$ is contained in $\conv(A\cup B)$.
Suppose that some Steiner point $s\notin\conv(A\cup B)$.
Let $H$ be a hyperplane that strictly separates $s$ from all other nodes of $G$.
Each directed edge $e$ incident to $s$ intersects $H$ in some point $p_e$.
If we project $s$, together with the part $e'$ joining $s$ and $p_e$ of each edge $e$ orthogonally onto $H$, then we obtain a shorter network, which is a contradiction.
\end{proof}

The following is a well-known geometric result that goes back to Fermat and Torricelli \cite[Problem 91]{Do}.
\begin{lemma}\label{lem:FT}
Let $a$, $b$, $c$ be three points in Euclidean space $\bE^d$.
Then there is a unique point $s$ that minimizes the sum of distances $\norm{a-s}_2 + \norm{b-s}_2 + \norm{c-s}_2$ to the given points.
If $\myangle abc, \myangle bca, \myangle cab < 120^\circ$, then $s$ is in the relative interior of $\conv\{a,b,c\}$, and $\myangle asb = \myangle bsc = \myangle csa = 120^\circ$.
If, on the other hand, $\myangle abc \geq 120^\circ$, say, then $s=b$.
\end{lemma}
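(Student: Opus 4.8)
The plan is to analyse the convex function $f(s)=\norm{a-s}_2+\norm{b-s}_2+\norm{c-s}_2$ directly. Existence of a minimizer follows from continuity together with coercivity: $f(s)\ge\norm{s}_2-\norm{a}_2\to\infty$ as $\norm{s}_2\to\infty$, so $f$ attains its minimum on the bounded, closed, nonempty sublevel set $\setbuilder{s}{f(s)\le f(a)}$, hence on all of $\bE^d$. For uniqueness I would first dispose of the collinear case: if $a,b,c$ lie on a line with, say, $b$ between $a$ and $c$, then the triangle inequality gives $f(s)\ge\norm{a-c}_2+\norm{b-s}_2=f(b)+\norm{b-s}_2$, with equality iff $s=b$, and $\myangle abc=180^\circ\ge120^\circ$, as the lemma claims. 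If $a,b,c$ are not collinear, then every line in $\bE^d$ omits at least one of them, so at least one summand of $f$ is strictly convex along that line while the others are convex; hence $f$ is strictly convex along every line, forcing the minimizer $s$ to be unique. From here on I assume $a,b,c$ form a nondegenerate triangle, so at most one of its angles is $\ge120^\circ$.

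Next I would determine when the minimizer is a vertex. At $a$ (and symmetrically at $b$ and $c$), using $a\neq b$ and $a\neq c$, the one-sided directional derivative of $f$ in a unit direction $u$ is
\[
  \lim_{t\to0^+}\frac{f(a+tu)-f(a)}{t}=1+\ipr{u}{\tfrac{a-b}{\norm{a-b}_2}+\tfrac{a-c}{\norm{a-c}_2}},
\]
whose minimum over unit $u$ equals $1-\bigl\lVert\tfrac{a-b}{\norm{a-b}_2}+\tfrac{a-c}{\norm{a-c}_2}\bigr\rVert_2=1-2\cos\bigl(\tfrac12\myangle bac\bigr)$, since the two summands are unit vectors meeting at angle $\myangle bac$. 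This quantity is $\ge0$ — so that $a$ is a local, hence by convexity the global, minimizer — exactly when $\myangle bac\ge120^\circ$. In particular, if $\myangle abc\ge120^\circ$ then $s=b$, which is the second assertion of the lemma.

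Finally, suppose all three angles of the triangle are $<120^\circ$. By the previous step none of $a,b,c$ is the minimizer, so $f$ is differentiable at $s$ and $\nabla f(s)=0$, i.e.
\[
  \frac{s-a}{\norm{s-a}_2}+\frac{s-b}{\norm{s-b}_2}+\frac{s-c}{\norm{s-c}_2}=0.
\]
Three unit vectors $u,v,w$ with $u+v+w=0$ are pairwise at $120^\circ$: from $w=-(u+v)$ one gets $1=\norm{u+v}_2^2=2+2\ipr{u}{v}$, so $\ipr{u}{v}=-\tfrac12$; applied to the displayed equation this says $\myangle asb=\myangle bsc=\myangle csa=120^\circ$. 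Rewriting the same equation as $s=\bigl(\sum_{x\in\{a,b,c\}}\norm{s-x}_2^{-1}\bigr)^{-1}\sum_{x\in\{a,b,c\}}\norm{s-x}_2^{-1}x$ exhibits $s$ as a convex combination of the affinely independent points $a,b,c$ with strictly positive weights, so $s$ lies in the relative interior of $\conv\{a,b,c\}$. The steps most in need of care are the existence argument and the bookkeeping of degenerate configurations (collinear or coincident points, which I would simply exclude as trivial); the directional-derivative computation and the ``unit vectors summing to zero'' fact are routine.
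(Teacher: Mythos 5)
Your argument is correct, and all three assertions (uniqueness, the $120^\circ$ condition at an interior Fermat point, and the vertex case when some angle is at least $120^\circ$) are established soundly: the coercivity-plus-strict-convexity-along-lines argument for existence and uniqueness, the one-sided directional derivative $1-2\cos\bigl(\tfrac12\myangle bac\bigr)$ at a vertex, the gradient condition forcing three unit vectors to sum to zero, and the barycentric rewriting of that condition to place $s$ in the relative interior of $\conv\{a,b,c\}$ are all valid, and the approach works directly in $\bE^d$ as the statement requires. Note, however, that the paper itself does not prove this lemma: it is quoted as the classical Fermat--Torricelli result with a citation to D\"orrie, so there is no in-paper proof to match yours against. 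Compared with the classical synthetic treatments usually cited (rotation by $60^\circ$ or the Torricelli circle construction, which are planar), your convex-analytic route is self-contained, handles the degenerate collinear case cleanly, and gives uniqueness and the higher-dimensional statement (including the fact that $s$ lies in the plane of the triangle) with no extra work; the only loose end is the coincident-point case in the uniqueness claim, which you correctly flag as trivial and could dispatch in one line (if $a=b$, say, then $f(s)\geq\norm{a-s}_2+\norm{c-a}_2$ with equality only at $s=a$).
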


A proof of the following lemma can be found in \cite[p.~22]{Gilbert-Pollak}.
\begin{lemma}\label{lem:FT2}
Let $a,b,c$ be three points in Euclidean space $\bE^d$ distinct from the origin $o$.
Suppose that all three angles $\myangle aob, \myangle boc, \myangle coa\geq 120^\circ$.
Then $a,b,c,o$ lie in the same $2$-dimensional plane, and $\myangle aob = \myangle boc = \myangle coa = 120^\circ$.
\end{lemma}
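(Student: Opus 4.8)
The plan is to normalize to unit vectors and then extract both conclusions from the single elementary inequality $\norm{u+v+w}_2^2\ge 0$. First I would replace $a,b,c$ by the unit vectors $u=\norm{a}_2^{-1}a$, $v=\norm{b}_2^{-1}b$, $w=\norm{c}_2^{-1}c$; this changes neither the three angles at $o$ nor the linear span of any pair of these directions, so nothing is lost. Since $\cos$ is decreasing on $[0^\circ,180^\circ]$ and $\cos 120^\circ=-\tfrac12$, the hypotheses $\myangle aob,\myangle boc,\myangle coa\ge 120^\circ$ translate into $\ipr{u}{v}\le-\tfrac12$, $\ipr{v}{w}\le-\tfrac12$, $\ipr{w}{u}\le-\tfrac12$.

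Then I would expand $\norm{u+v+w}_2^2=\norm{u}_2^2+\norm{v}_2^2+\norm{w}_2^2+2\ipr{u}{v}+2\ipr{v}{w}+2\ipr{w}{u}=3+2\bigl(\ipr{u}{v}+\ipr{v}{w}+\ipr{w}{u}\bigr)$, which by the three inequalities is at most $3+2\cdot 3\cdot(-\tfrac12)=0$. A squared norm is nonnegative, so in fact $\norm{u+v+w}_2=0$, i.e.\ $u+v+w=0$, and equality must hold in each of the three inner-product inequalities, giving $\ipr{u}{v}=\ipr{v}{w}=\ipr{w}{u}=-\tfrac12$. Translating back to angles, $\myangle aob=\myangle boc=\myangle coa=120^\circ$, which is the second assertion.

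Finally, coplanarity follows from $u+v+w=0$: this says $w=-(u+v)\in\operatorname{span}\{u,v\}$, and since $\ipr{u}{v}=-\tfrac12$ forces $u$ and $v$ to be linearly independent (a unit vector has inner product $\pm 1$ with any unit vector parallel to it), $\operatorname{span}\{u,v\}$ is a genuine $2$-dimensional subspace through $o$; it contains $u,v,w$, hence it contains $o,a,b,c$.

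I do not anticipate a real obstacle here; the only points requiring care are checking that the normalization preserves both the angles and the relevant plane, and observing that the equality case of $\norm{u+v+w}_2^2\ge 0$ simultaneously yields the angle statement and, via $u+v+w=0$, the coplanarity statement. One could instead run the same computation with suitably rescaled vectors without normalizing first, but normalizing keeps the bookkeeping trivial.
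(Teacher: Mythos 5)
Your proof is correct, and it is not the route the paper takes: the paper does not prove Lemma~\ref{lem:FT2} at all, but simply cites the geometric argument in Gilbert--Pollak. Your argument is a clean, self-contained algebraic one: after normalizing to unit vectors $u,v,w$, the hypotheses become $\ipr{u}{v},\ipr{v}{w},\ipr{w}{u}\leq-\tfrac12$, and the single inequality $0\leq\norm{u+v+w}_2^2=3+2\bigl(\ipr{u}{v}+\ipr{v}{w}+\ipr{w}{u}\bigr)\leq 0$ forces equality throughout, giving both $\myangle aob=\myangle boc=\myangle coa=120^\circ$ and $u+v+w=0$, from which coplanarity follows since $u,v$ are linearly independent (their inner product is $-\tfrac12\neq\pm1$). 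Compared with the classical geometric treatment (which in effect reduces to the fact that the face angles of a trihedral angle sum to less than $360^\circ$ unless the rays are coplanar), your computation works verbatim in $\bE^d$ for every $d$ --- indeed in any inner product space --- without first reducing to the $3$-dimensional span, and it yields as a bonus the balancing identity $u+v+w=0$ for the three unit directions, which is exactly the ``equilibrium'' condition that underlies the $120^\circ$ configurations elsewhere in the paper. All the small checks you flag (angles and spans are unchanged by positive rescaling; the equality case of each $\leq-\tfrac12$ is forced because the three terms sum to exactly $-\tfrac32$) are handled correctly.
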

As a consequence of the previous two lemmas, we obtain the following properties of shortest directed networks in Euclidean space.
\begin{lemma}\label{lem:FT3}
Let $G=(V,E)$ be a shortest $(A,B)$-network in Euclidean space $\bE^d$.
\begin{enumerate}
\item\label{lem:FT3:1} Let $v\in V$.
Then the angle at $v$ between any two incoming directed edges or any two outgoing directed edges at $v$ is $\geq 120^\circ$.
Consequently, $\deg^-(v), \deg^+(v)\leq 3$.
\item\label{lem:FT3:2} Let $v\in V\setminus B$ satisfy $\deg^+(v)=1$.
Then the angle at $v$ between the outgoing directed edge and any incoming directed edge is $\geq 120^\circ$.
Consequently, $\deg^-(v)\leq 2$.
\item\label{lem:FT3:3} Let $v\in V\setminus B$ satisfy $\deg^-(v)=1$.
Then the angle at $v$ between the incoming directed edge and any outgoing directed edge is $\geq 120^\circ$.
Consequently, $\deg^+(v)\leq 2$.
\end{enumerate}
\end{lemma}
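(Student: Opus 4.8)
\emph{Approach.} The plan is to prove all three parts by one and the same local modification. Suppose, towards a contradiction, that some angle at $v$ prescribed by the statement is less than $120^\circ$. I would then insert a new Steiner point $s$ close to $v$, reroute through $s$ the handful of directed edges affected by the change, and verify that the resulting network $G'$ is an $(A,B)$-network with $\norm{G'}<\norm{G}$, contradicting minimality. The length saving always comes from one inequality: if $\hat p$ and $\hat q$ are the unit vectors pointing from $v$ into two edges that meet at $v$ at an angle $\theta<120^\circ$, then $\norm{\hat p+\hat q}_2^2=2+2\cos\theta>1$, so with $s=v+\varepsilon(\hat p+\hat q)/\norm{\hat p+\hat q}_2$ (a legitimate choice since $\hat p+\hat q\neq 0$) a first-order expansion gives $\norm{G'}-\norm{G}=\varepsilon\bigl(1-\norm{\hat p+\hat q}_2\bigr)+o(\varepsilon)<0$ for all small $\varepsilon>0$. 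Equivalently, one can quote Lemma~\ref{lem:FT}: the relevant angle at $v$ being $<120^\circ$ means $v$ is not the Fermat point of the triple of points concerned, so a point just off $v$ towards that Fermat point is cheaper. The degree bounds then drop out of Lemma~\ref{lem:FT2}: three unit vectors at pairwise angles $\geq 120^\circ$ must be coplanar and at exactly $120^\circ$, so there is never room for a fourth.

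\emph{Part~\ref{lem:FT3:1}.} If $\arc{a}{v}$ and $\arc{b}{v}$ are distinct incoming edges with $\myangle avb<120^\circ$, delete them both, take $\hat p,\hat q$ to be the unit vectors from $v$ towards $a$ and $b$, and add $\arc{a}{s},\arc{b}{s},\arc{s}{v}$. A directed path visits $v$ at most once and so uses at most one of $\arc{a}{v},\arc{b}{v}$; replacing that edge by the two-edge detour through $s$ turns every $(a_i,b_j)$-path of $G$ into one in $G'$, so $G'$ is an $(A,B)$-network — and shorter by the computation above. Hence any two incoming, and symmetrically any two outgoing, edges at $v$ make an angle $\geq 120^\circ$, and applying Lemma~\ref{lem:FT2} to these directions gives $\deg^-(v),\deg^+(v)\leq 3$.

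\emph{Parts~\ref{lem:FT3:2} and~\ref{lem:FT3:3}.} For Part~\ref{lem:FT3:2}, let $\arc{v}{c}$ be the unique outgoing edge at $v$ and let $\arc{a}{v}$ be an incoming edge with $\myangle avc<120^\circ$. Delete $\arc{a}{v}$ and $\arc{v}{c}$ and add $\arc{a}{s},\arc{s}{c},\arc{v}{s}$, with $\hat p,\hat q$ the unit vectors from $v$ towards $a$ and $c$. Because $v\notin B$ and $\deg^+(v)=1$, every directed path of $G$ that meets $v$ must leave $v$ along $\arc{v}{c}$; a path that used $\arc{a}{v}$ is rerouted by replacing its subpath $a\to v\to c$ with $a\to s\to c$, and any other path through $v$ by replacing $v\to c$ with $v\to s\to c$. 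So $G'$ is an $(A,B)$-network and shorter, whence $\arc{v}{c}$ makes an angle $\geq 120^\circ$ with every incoming edge; together with Part~\ref{lem:FT3:1} the incoming directions and the direction of $\arc{v}{c}$ are pairwise at angles $\geq 120^\circ$, so $\deg^-(v)\leq 2$ by Lemma~\ref{lem:FT2}. Part~\ref{lem:FT3:3} is the mirror image, with the roles of incoming and outgoing edges exchanged: delete the unique incoming edge $\arc{c}{v}$ and an outgoing edge $\arc{v}{a}$ with $\myangle cva<120^\circ$, add $\arc{c}{s},\arc{s}{v},\arc{s}{a}$, and reroute — the reroute being legitimate because $\deg^-(v)=1$ forces every path that meets $v$ to enter $v$ along $\arc{c}{v}$.

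\emph{Main obstacle.} The step I expect to require the most care is exactly the verification that each modified network $G'$ is still an $(A,B)$-network — that no source–sink pair is left with no directed path. For Part~\ref{lem:FT3:1} this is automatic, but for Parts~\ref{lem:FT3:2} and~\ref{lem:FT3:3} it rests squarely on the degree hypothesis at $v$ (that $\deg^+(v)=1$, respectively $\deg^-(v)=1$), which channels all the traffic through $v$ onto one distinguished edge so that rerouting that single edge suffices, and one must pay a little extra attention to directed paths that begin or end at $v$ when $v$ happens itself to be a source or a sink. The remaining ingredients — the first-order length estimate and the packing bound for unit vectors at pairwise angles $\geq 120^\circ$ — are routine.
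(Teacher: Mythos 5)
Your treatment of parts~\ref{lem:FT3:1} and~\ref{lem:FT3:2} is essentially the paper's own argument: the paper replaces the two offending edges by the three edges through the Fermat point of the relevant triple (Lemma~\ref{lem:FT}) and reroutes paths exactly as you do, using $v\notin B$ and $\deg^+(v)=1$ in part~\ref{lem:FT3:2} to force all traffic through $v$ onto the single outgoing edge; your first-order computation with $s=v+\varepsilon(\hat p+\hat q)/\norm{\hat p+\hat q}_2$ is an acceptable substitute for quoting Lemma~\ref{lem:FT} (you note the equivalence yourself), and the degree bounds via Lemma~\ref{lem:FT2} are the same.

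The genuine gap is in your part~\ref{lem:FT3:3}. The claim that $\deg^-(v)=1$ forces every path meeting $v$ to enter $v$ along $\arc{c}{v}$ fails for a path that \emph{begins} at $v$, and the hypothesis you are given ($v\in V\setminus B$) does not exclude $v\in A$. If $v$ is a source with $\deg^+(v)\geq 2$, a source--sink path that starts at $v$ and uses the deleted edge $\arc{v}{a}$ has no replacement in your modified network: you add $\arc{c}{s}$, $\arc{s}{v}$, $\arc{s}{a}$ but no edge from $v$ to $s$ or to $a$, so $v$ may lose its access to the sink that this path served. (In part~\ref{lem:FT3:2} the analogous danger is harmless precisely because you add $\arc{v}{s}$; its mirror image here is $\arc{s}{v}$, which points the wrong way, so the two cases are not symmetric under your literal hypotheses.) The paper's proof of part~\ref{lem:FT3:3} is ``change the directions of all directed edges and apply part~\ref{lem:FT3:2}''; reversal turns a shortest $(A,B)$-network into a shortest $(B,A)$-network and swaps the roles of $A$ and $B$, so what that argument actually establishes is the conclusion under the hypothesis $v\notin A$ (which is also what the rest of the paper needs); the exclusion in the statement of part~\ref{lem:FT3:3} should be read accordingly. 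So either invoke the edge-reversal symmetry explicitly, or add the hypothesis $v\notin A$ to your direct mirror argument; as written, your rerouting claim in part~\ref{lem:FT3:3} does not follow from the hypotheses you cite, and you flag this worry in your closing remarks without resolving it.
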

\begin{proof}
\ref{lem:FT3:1}.
By Lemma~\ref{lem:FT}, if some angle between two incoming directed edges is $<120^\circ$, then we can replace these two directed edges by three directed edges joined to a Steiner point, oriented appropriately, to obtain a shorter $(A,B)$-network.
It then follows from Lemma~\ref{lem:FT2} that there cannot be more than three incoming edges.

\smallskip
\ref{lem:FT3:2}.
Let the outgoing directed edge be $\arc{v}{b}$ and consider any incoming directed edge $\arc{c}{v}$ with $\myangle bvc<120^\circ$.
Note that by minimality of $G$, since $v$ is not a sink and has only one outgoing directed edge, any directed path from a source to a sink that uses $\arc{c}{v}$ also has to use $\arc{v}{b}$.
We can then replace $\arc{v}{b}$ and $\arc{c}{v}$ by a Steiner point $s$ from Lemma~\ref{lem:FT} and directed edges $\arc{c}{s}$, $\arc{v}{s}$ and $\arc{s}{b}$, to obtain a shorter $(A,B)$-network.

\smallskip
\ref{lem:FT3:3}. Similar to \ref{lem:FT3:2} by changing the directions of all directed edges.
\end{proof}
\begin{corollary}[{\cite[Theorem~3.1]{Alfaro-Thesis}}]\label{cor:FT3}
The only possible degrees of a node in a simple $(A,B)$-network in any Euclidean space of dimension at least $2$ are $(1,2)$, $(2,1)$, $(2,2)$, $(2,3)$, $(3,2)$ and $(3,3)$.
\end{corollary}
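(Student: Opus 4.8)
The plan is to read the corollary off almost immediately from Lemma~\ref{lem:FT3} together with the definition of a simple network; the argument is very short. Here ``node'' should be understood as ``Steiner point'', since a source or sink may well have in- or out-degree $0$. So I would fix a shortest $(A,B)$-network $G=(V,E)$, which by the length-non-increasing reductions described earlier we may take to be simple, and let $s$ be a Steiner point of $G$. By simplicity, $\deg^-(s)\geq 1$, $\deg^+(s)\geq 1$ and $\deg^-(s)+\deg^+(s)\geq 3$, while Lemma~\ref{lem:FT3}\ref{lem:FT3:1} gives $\deg^-(s)\leq 3$ and $\deg^+(s)\leq 3$. Putting these together, $\deg(s)$ must be one of
\[
(1,2),\quad (2,1),\quad (1,3),\quad (3,1),\quad (2,2),\quad (2,3),\quad (3,2),\quad (3,3).
\]

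It then remains to discard the two extra candidates $(1,3)$ and $(3,1)$. The key observation is that a Steiner point lies in $V\setminus(A\cup B)$ and is in particular not a sink, so parts \ref{lem:FT3:2} and \ref{lem:FT3:3} of Lemma~\ref{lem:FT3} apply at $s$: if $\deg^-(s)=1$ then $\deg^+(s)\leq 2$, which rules out $(1,3)$, and if $\deg^+(s)=1$ then $\deg^-(s)\leq 2$, which rules out $(3,1)$. What survives is exactly the list of six degrees in the statement, completing the proof.

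There is no genuine obstacle in this argument. The only point that needs care is that the pair of bounds $\deg^-(s),\deg^+(s)\leq 3$ coming from Lemma~\ref{lem:FT3}\ref{lem:FT3:1} is by itself too weak --- it still allows $(1,3)$ and $(3,1)$ --- and those degrees get eliminated only by invoking the sharper statements \ref{lem:FT3:2}--\ref{lem:FT3:3}, whose hypothesis ``$v\notin B$'' is exactly what a Steiner point satisfies. Finally, the assumption that the ambient Euclidean space has dimension at least $2$ is not used in this bound; it would be relevant only if one also wished to show that each of the six degrees is attained by some shortest directed network.
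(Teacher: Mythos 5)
Your proof is correct and is essentially the paper's own (implicit) argument: the corollary is stated there without proof as an immediate consequence of Lemma~\ref{lem:FT3} together with the definition of a simple network, read (as you do) for Steiner points of a shortest network, with parts \ref{lem:FT3:2}--\ref{lem:FT3:3} excluding $(1,3)$ and $(3,1)$. Your interpretive remarks (``node'' means Steiner point, the network is shortest, and dimension at least $2$ is only relevant for attainability) match the intended reading.
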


\section{Steiner points of degree 4}\label{sec:deg4}
\begin{theorem}\label{thm4}
Let $a_1b_1a_2b_2$ be a convex quadrilateral in the Euclidean plane with diagonals $a_1a_2$ and $b_1b_2$ intersecting in $o$.
Let $A=\{a_1,a_2\}$ be the set of sources and $B=\{b_1,b_2\}$ the set of sinks.
Then the network with directed edges $\arc{a_i}{s}$ and $\arc{s}{b_i}$, $i=1,2$, is a shortest $(A,B)$-network if, and only if $s=o$.
\end{theorem}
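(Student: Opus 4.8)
My plan is to treat the two implications separately; the ``only if'' direction is elementary, and the ``if'' direction carries all the content. For \emph{only if}, write $N(s)$ for the network in the statement. Since $o$ lies in the relative interior of both diagonals we have $o\notin\set{a_1,a_2,b_1,b_2}$ and $[a_1,a_2]\cap[b_1,b_2]=\set{o}$. For any $s$, the triangle inequality gives $\norm{a_1-s}_2+\norm{s-a_2}_2\geq\norm{a_1-a_2}_2$ and $\norm{b_1-s}_2+\norm{s-b_2}_2\geq\norm{b_1-b_2}_2$, so $\norm{N(s)}_2\geq\norm{a_1-a_2}_2+\norm{b_1-b_2}_2=\norm{N(o)}_2$, with equality only when $s\in[a_1,a_2]\cap[b_1,b_2]$, i.e.\ $s=o$. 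Hence if $s\neq o$ then $N(s)$ is strictly longer than the $(A,B)$-network $N(o)$ and is not shortest.

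For \emph{if} it suffices to prove $\norm{G}_2\geq\norm{a_1-a_2}_2+\norm{b_1-b_2}_2$ for every $(A,B)$-network $G$, since $N(o)$ attains this value. I would recast the inequality using the norm $\norm{\cdot}_{1(\theta)}$, which has the added benefit that, as $(\bR^2,\norm{\cdot}_{1(\theta)})$ is linearly isometric to the $\ell_1$-plane, the recast inequality transfers to give Corollaries~\ref{cor1} and~\ref{cor2}. After rotating the plane, choose $\theta\in(0^\circ,90^\circ)$ so that $a_1-a_2$ is parallel to $u:=\opair{\cos\theta}{\sin\theta}$ and $b_1-b_2$ is parallel to $w:=\opair{\cos\theta}{-\sin\theta}$; this is possible because $2\theta$ may be taken equal to the (acute) angle between the diagonals. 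One checks directly that $\pm u,\pm w$ are the vertices of the dual unit ball of $\norm{\cdot}_{1(\theta)}$, so $\abs{\ipr{u}{v}},\abs{\ipr{w}{v}}\leq\norm{v}_{1(\theta)}$ for all $v$; that $\ipr{u}{a_1-a_2}=\norm{a_1-a_2}_{1(\theta)}=\norm{a_1-a_2}_2$ and likewise for $w$ and $b_1-b_2$; and that every edge of $N(o)$ is parallel to $u$ or $w$, whence $\norm{N(o)}_{1(\theta)}=\norm{N(o)}_2$. Since $\norm{G}_2\geq\norm{G}_{1(\theta)}$ by Lemma~\ref{lemma1}, it is enough to prove the single-norm inequality $\norm{G}_{1(\theta)}\geq\norm{a_1-a_2}_{1(\theta)}+\norm{b_1-b_2}_{1(\theta)}$ for every $(A,B)$-network $G$ in $(\bR^2,\norm{\cdot}_{1(\theta)})$.

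To establish this, pass to the underlying undirected graph of $G$ (discarding unused edges and components so that it is connected), in which $a_1$ and $a_2$ are joined, as are $b_1$ and $b_2$. The crucial observation is that no single edge of $G$ can separate both $a_1$ from $a_2$ and $b_1$ from $b_2$: if deleting a bridge $\arc{x}{y}$ put $a_1,b_1$ on one side and $a_2,b_2$ on the other (the remaining assignment is symmetric), then the directed path from $a_1$ to $b_2$ and the directed path from $a_2$ to $b_1$ would both have to use the edge $\arc{x}{y}$, but in opposite directions, which is impossible. Consequently, with unit capacities, every edge cut has capacity at least the number of the terminal pairs $\set{a_1,a_2},\set{b_1,b_2}$ that it separates, so by the two-commodity flow theorem there is a feasible fractional routing of a unit $a_1$--$a_2$ flow together with a unit $b_1$--$b_2$ flow with total load $\phi_e+\psi_e\leq 1$ on each edge. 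A telescoping computation along the flows (orienting each edge in the direction of the relevant flow) gives $\norm{a_1-a_2}_{1(\theta)}=\ipr{u}{a_1-a_2}\leq\sum_e\phi_e\abs{\ipr{u}{e}}\leq\sum_e\phi_e\norm{e}_{1(\theta)}$ and, similarly, $\norm{b_1-b_2}_{1(\theta)}\leq\sum_e\psi_e\norm{e}_{1(\theta)}$. Adding and using $\phi_e+\psi_e\leq 1$ yields $\norm{a_1-a_2}_{1(\theta)}+\norm{b_1-b_2}_{1(\theta)}\leq\sum_e(\phi_e+\psi_e)\norm{e}_{1(\theta)}\leq\norm{G}_{1(\theta)}$, as required.

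I expect the main obstacle to be the step invoking the two-commodity flow theorem: one must set up the undirected routing problem carefully (including multigraph bookkeeping when $G$ contains both $\arc{x}{y}$ and $\arc{y}{x}$), verify the cut condition in full---its only nontrivial instance being exactly the ``one edge cannot separate both pairs'' claim above, together with the trivial fact that a cut separating a connected terminal pair has at least one edge---and make the flow-to-functional telescoping precise. Everything else---the triangle-inequality argument for ``only if'', the elementary computations with $\norm{\cdot}_{1(\theta)}$, and the passage back to the Euclidean statement---is routine, and, as noted, the single-norm inequality is also precisely what yields the announced results for the $\ell_1$-plane.
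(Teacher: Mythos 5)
Your proposal is correct, but it proves the lower bound by a genuinely different mechanism than the paper. The ``only if'' direction is the same one-line triangle-inequality argument in both. For the ``if'' direction, the paper first normalises the quadrilateral to a rectangle via Lemma~\ref{lem:local}, replaces every edge by a broken edge (Lemma~\ref{lemma3}) so that all edges point along the four coordinate half-axes, and then charges each of the four axis directions to one of the four required source--sink paths: an $(a_1,b_1)$-path must contain negative-$x$ edges of total $\norm{\cdot}_{1(\theta)}$-length at least $2\cos^2\theta$, and similarly for $(a_2,b_2)$, $(a_2,b_1)$, $(a_1,b_2)$, giving $\norm{N'}_{1(\theta)}\geq 4$ with no edge counted twice. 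You instead keep the general quadrilateral, use the norming functionals $u,w$ (the contact points of $B_{1(\theta)}$ with the Euclidean circle), and bound $\norm{G}_{1(\theta)}$ from below by routing a unit $a_1$--$a_2$ flow and a unit $b_1$--$b_2$ flow in the underlying undirected multigraph; the orientation of $G$ enters only through your (correct) observation that a single directed edge cannot be traversed in both directions by the required $(a_i,b_j)$-paths, which is exactly what makes the cut condition of Hu's two-commodity flow theorem hold, and the flow decomposition plus duality then telescopes to the desired bound. What your route buys: no rectangle normalisation and no broken edges, and since norming functionals exist for any norm (for the Euclidean case Cauchy--Schwarz suffices and the auxiliary norm is not even needed), the same argument gives the more general statement that in any normed plane every network with two sources and two sinks has length at least $\norm{a_1-a_2}+\norm{b_1-b_2}$. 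What it costs: dependence on Hu's theorem and flow/decomposition bookkeeping, where the paper's counting argument is elementary and self-contained. Two small caveats on your side remarks rather than on the proof itself: the claimed transfer to Corollary~\ref{cor2} is not immediate, since that configuration has four sources and four sinks and a two-commodity argument (and the cut condition) no longer suffices, whereas the paper's directional counting handles it at once; and for Corollary~\ref{cor1} note that a linear isometry onto the $\ell_1$-plane sends $u,w$ to relative interior points of edges of $B_1$ rather than to $\pm e_1,\pm e_2$, so one should either run your flow argument directly in the $\ell_1$-norm with the functionals $\opair{1}{0}$ and $\opair{0}{1}$ or derive the corollary from the Euclidean theorem as the paper does.
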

\begin{proof}
We first note that by the triangle inequality it follows that if the network with edge set $\set{\arc{a_1}{s},\arc{a_2}{s},\arc{s}{b_1},\arc{s}{b_2}}$ is shortest, then $s=o$.

We next show the converse.
By Lemma~\ref{lem:local}, it is sufficient to prove the theorem with $a_1b_1a_2b_2$ a rectangle.
From now on we assume that $a_1b_1a_2b_2$ is a rectangle with half diagonals of length $\norm{a_1}_2=\norm{a_2}_2=\norm{b_1}_2=\norm{b_2}_2=1$.
We choose coordinates such that $o$ is the origin, $a_1$ is in the first quadrant, and the axes bisect the four angles created by the diagonals $a_1a_2$ and $b_1b_2$ at $o$.
Let $\theta$ be the angle that $oa_1$ makes with the positive $x$-axis.
(See Fig.~\ref{fig2}.)
\begin{figure}
\centering
\begin{tikzpicture}[line cap=round,line join=round,>=Stealth,scale=0.5]
\draw [shift={(5.5873202253576,0.93376402396454)}] (0,0) -- (-0.12229057161031615:0.8) arc (-0.12229057161031615:40.510780060725196:0.8) -- cycle;
\draw(1.1211305251330388,4.357764000364571) -- (1.5453936274675397,4.3568584632285985) -- (1.5462991646035116,4.7811215655631) -- (1.1220360622690104,4.782027102699072) -- cycle; 
\draw(9.644727811183497,4.763836470908054) -- (9.643822274047526,4.339573368573554) -- (10.068085376382026,4.338667831437582) -- (10.068990913517998,4.762930933772083) -- cycle; 
\draw(10.053509925582162,-2.49023595243549) -- (9.62924682324766,-2.489330415299518) -- (9.628341286111688,-2.9135935176340193) -- (10.05260438844619,-2.914499054769991) -- cycle; 
\draw(1.5299126395317022,-2.8963084229789757) -- (1.5308181766676738,-2.4720453206444746) -- (1.106555074333173,-2.471139783508503) -- (1.1056495371972013,-2.895402885843004) -- cycle; 
\draw (1.1220360622690104,4.782027102699072)-- (10.068990913517998,4.762930933772083);
\draw (10.068990913517998,4.762930933772083)-- (10.05260438844619,-2.914499054769991);
\draw (10.05260438844619,-2.914499054769991)-- (1.1056495371972013,-2.895402885843004);
\draw (1.1056495371972013,-2.895402885843004)-- (1.1220360622690104,4.782027102699072);
\draw (1.1056495371972013,-2.895402885843004)-- (10.068990913517998,4.762930933772083);
\draw (1.1220360622690104,4.782027102699072)-- (10.05260438844619,-2.914499054769991);
\draw [->] (0.1600114145243677,0.9453479435878902) -- (11.239960939122417,0.9216991612544564);
\draw [->] (5.578113129109964,-3.3799532851801817) -- (5.596810089735671,5.37996412094186);
\draw [fill=white] (5.5873202253576,0.93376402396454) circle (5pt);
\draw[color=black] (5.9,1.6) node {$o$};
\draw[color=black] (6.7,1.3) node {$\theta$};
\draw [fill=red] (1.1220360622690104,4.782027102699072) circle (5pt);
\draw[color=black] (0.6,5.12) node {$b_1$};
\draw [fill=blue] (10.068990913517998,4.762930933772083) circle (5pt);
\draw[color=black] (10.6,5.1) node {$a_1$};
\draw [fill=red] (10.05260438844619,-2.914499054769991) circle (5pt);
\draw[color=black] (10.6,-2.58) node {$b_2$};
\draw [fill=blue] (1.1056495371972013,-2.895402885843004) circle (5pt);
\draw[color=black] (0.6,-2.56) node {$a_2$};
\draw[color=black] (11.5,1.3) node {$x$};
\draw[color=black] (5.9,5.66) node {$y$};
\end{tikzpicture}
\caption{Proof of Theorem~\ref{thm4}}\label{fig2}
\end{figure}
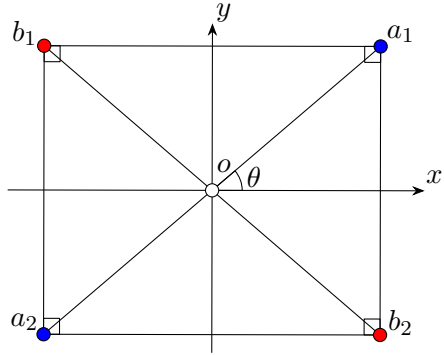

Let $N$ be any given $(A,B)$-network.
We have to show that the total length $\norm{N}_2\geq
4$.
Create a new $(A,B)$-network $N'$ by replacing each edge $\arc{v}{w}$ of $N$ by a broken edge, that is, an edge $\arc{v}{s}$ parallel to the $x$-axis, joined to an edge $\arc{s}{w}$ parallel to the $y$-axis, where $s$ is a new Steiner point of degree $(1,1)$.
Then
\begin{equation}\label{ineq}
\norm{N}_2\geq\norm{N}_{1(\theta)}=\norm{N'}_{1(\theta)},
\end{equation}
by Lemmas~\ref{lemma1} and \ref{lemma3}.
Each edge in $N'$ is in one of $4$ directions: the positive and negative $x$- and $y$-axes.
Consider any $(a_1,b_1)$-path $P$ in the new network $N'$.
The total $\norm{\cdot}_{1(\theta)}$-length of all the edges of $P$ in the direction of the negative $x$-axis is at least $\norm{a_1-b_1}_{1(\theta)} = (2\cos\theta)\cos\theta + 0\sin\theta= 2\cos^2\theta$.
Similarly, the total $\norm{\cdot}_{1(\theta)}$-length of all the edges of an $(a_2,b_2)$-path in the direction of the positive $x$-axis is also at least $2\cos^2\theta$.
In the same way, the total $\norm{\cdot}_{1(\theta)}$-length of all the edges of an $(a_2,b_1)$-path in the direction of the positive $y$-axis is at least $2\sin^2\theta$, and of all the edges of an $(a_1,b_2)$-path in the direction of the negative $y$-axis is also at least $2\sin^2\theta$.
Since we did not count any edge twice, we arrive at a lower bound for the $\norm{\cdot}_{1(\theta)}$-length of $N'$ of
\[ \norm{N'}_{1(\theta)} \geq 2\cos^2\theta+2\cos^2\theta+2\sin^2\theta+2\sin^2\theta = 4.\]
This, together with \eqref{ineq}, finishes the proof that $\norm{N}_2\geq 4$.
\end{proof}

As corollaries of the above Euclidean result, we obtain two results in the $\ell_1$-plane.
\begin{corollary}\label{cor1}
Consider the $\ell_1$-plane with unit ball with vertices $\pm e_1, \pm e_2$.
Let $A=\set{\pm e_1}$ be the set of sources and $B=\set{\pm e_2}$ the set of sinks.
Then the directed $(A,B)$-network with edge set $\set{\arc{e_1}{o},\arc{-e_1}{o},\arc{o}{e_2},\arc{o}{-e_2}}$ is shortest.
\end{corollary}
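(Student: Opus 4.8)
The plan is to derive Corollary~\ref{cor1} directly from the Euclidean Theorem~\ref{thm4} by comparing the $\ell_1$-norm with the Euclidean norm. The key structural feature is that the sources and sinks $\pm e_1,\pm e_2$ are exactly the points at which the $\ell_1$-ball $B_1$ touches the Euclidean ball $B_2$, so $\norm{\cdot}_1$ and $\norm{\cdot}_2$ coincide along the two coordinate axes.

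First I would apply Theorem~\ref{thm4}: the quadrilateral $e_1 e_2(-e_1)(-e_2)$ is a convex (square) quadrilateral whose diagonals $a_1a_2=[e_1,-e_1]$ and $b_1b_2=[e_2,-e_2]$ meet at the origin $o$, so with $A=\set{\pm e_1}$, $B=\set{\pm e_2}$ and $s=o$ the theorem tells us that the network $C$ with edge set $\set{\arc{e_1}{o},\arc{-e_1}{o},\arc{o}{e_2},\arc{o}{-e_2}}$ is a shortest $(A,B)$-network \emph{in the Euclidean plane}. Each of its four edges has Euclidean length $1$, so $\norm{C}_2=4$ and every $(A,B)$-network $N$ satisfies $\norm{N}_2\geq 4$.

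Next I would observe that every edge of $C$ is parallel to a coordinate axis, and for axis-parallel vectors the two norms agree: $\norm{\opair{t}{0}}_1=\abs{t}=\norm{\opair{t}{0}}_2$ and $\norm{\opair{0}{t}}_1=\norm{\opair{0}{t}}_2$. Hence $\norm{C}_1=\norm{C}_2=4$. Summing the pointwise inequality $\norm{x}_1\geq\norm{x}_2$ (i.e.\ $B_1\subseteq B_2$) over the edges of an arbitrary $(A,B)$-network $N$ gives $\norm{N}_1\geq\norm{N}_2\geq 4=\norm{C}_1$, so $C$ is a shortest $(A,B)$-network in the $\ell_1$-plane.

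I expect no serious obstacle here; the only point requiring care is the observation that the Euclidean optimum supplied by Theorem~\ref{thm4} runs entirely along the axes, which is exactly what makes the transferred lower bound $4$ sharp for $\norm{\cdot}_1$. (The same trick fails for $\ell_\infty$, where $\norm{\cdot}_\infty\leq\norm{\cdot}_2$ goes the wrong way; one really uses that $\ell_1$ dominates the Euclidean norm.)
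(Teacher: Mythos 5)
Your proposal is correct and follows essentially the same route as the paper: apply Theorem~\ref{thm4} to the square with vertices $\pm e_1,\pm e_2$ to get the Euclidean lower bound, then use $\norm{N}_1\geq\norm{N}_2$ together with the fact that the optimal network's axis-parallel edges have equal $\ell_1$- and Euclidean length. Your write-up just spells out the intermediate value $4$ and the convexity check more explicitly than the paper does.
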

\begin{proof}
Let $N_0$ denote the network described in the corollary, and let $N$ be any $(A,B)$-network.
Note that $\norm{N}_2\leq\norm{N}_1$ for any network $N$.
By Theorem~\ref{thm4}, $\norm{N}_2\geq\norm{N_0}_2$.
It follows that $\norm{N}_1\geq\norm{N_0}_2=\norm{N_0}_1$.
\end{proof}
Since $\norm{x}_2\leq\norm{x}_1$, and because we can replace edges of shortest directed networks in the $\ell_1$-plane by broken edges without changing the length, it follows that in any shortest directed network in the $\ell_1$-plane, each vertex, including Steiner points, has indegree and outdegree at most $4$ each.
This can be attained.
\begin{corollary}\label{cor2}
In the $\ell_1$-plane, let $A=B=\set{\pm e_1,\pm e_2}$.
Then the $(A,B)$-network with the $8$ edges $\set{\arc{o}{\pm e_1}, \arc{\pm e_1}{o}, \arc{o}{\pm e_2}, \arc{\pm e_2}{o}}$ is shortest.
\end{corollary}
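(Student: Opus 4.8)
The plan is to show that every $(A,B)$-network $N$ in the $\ell_1$-plane with $A=B=\set{\pm e_1,\pm e_2}$ satisfies $\norm{N}_1\geq 8$, which suffices because the proposed network $N_0$ is an $(A,B)$-network (from any $\pm e_i$ go to $o$, then to any $\pm e_j$; trivial paths handle the diagonal pairs) of length exactly $8$, since each of its eight edges has $\ell_1$-length $1$.

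First I would normalize $N$. The $\ell_1$ unit ball is a centrally symmetric quadrilateral with vertices $\pm e_1,\pm e_2$, so Lemma~\ref{lemma3} applies: replacing each edge of $N$ by a broken edge neither changes $\norm{N}_1$ nor destroys the property of being an $(A,B)$-network. Hence we may assume every edge of $N$ is parallel to a coordinate axis, and so points in exactly one of the four directions $+x$, $-x$, $+y$, $-y$. Writing $L_{+x},L_{-x},L_{+y},L_{-y}$ for the total $\ell_1$-length of the edges in each of these four directions, we get $\norm{N}_1=L_{+x}+L_{-x}+L_{+y}+L_{-y}$, with each edge of $N$ contributing to exactly one summand.

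Next I would extract a lower bound from each of four of the required directed paths. Consider a directed $(e_1,-e_1)$-path $P$ in $N$; it has at least one edge since $e_1\neq -e_1$. Along $P$ the $x$-coordinate changes only along the $+x$- and $-x$-edges of $P$, and its net change is $-2$, so (total length of $-x$-edges of $P$) $-$ (total length of $+x$-edges of $P$) $=2$; in particular the total length of the $-x$-edges of $P$ is at least $2$, whence $L_{-x}\geq 2$. The same argument applied to an $(-e_1,e_1)$-path, an $(-e_2,e_2)$-path, and an $(e_2,-e_2)$-path gives $L_{+x}\geq 2$, $L_{+y}\geq 2$, and $L_{-y}\geq 2$ respectively. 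Summing these four bounds yields $\norm{N}_1\geq 8=\norm{N_0}_1$, so $N_0$ is a shortest $(A,B)$-network.

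I do not expect any serious obstacle. The only points needing care are verifying that the broken-edge replacement legitimately reduces the problem to axis-parallel networks (an application of Lemma~\ref{lemma3} and the remark following it), and observing that the four direction classes partition the edge set so that the four lower bounds of $2$ add without overcounting.
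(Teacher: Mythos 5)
Your proof is correct and takes essentially the same route as the paper, which only sketches the argument as ``similar to Theorem~\ref{thm4}'': broken-edge normalization via Lemma~\ref{lemma3} followed by a lower bound of $2$ on the total length in each of the four axis directions, obtained from the four antipodal source--sink paths. Your bookkeeping with $L_{\pm x},L_{\pm y}$ is just a precise way of stating the paper's ``no edge counted twice'' observation.
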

\begin{proof}
The proof is the similar to that of Theorem~\ref{thm4}: Consider all edges in the direction of the positive $x$-axis from $-e_1$ to $e_1$, etc.
\end{proof}

\section{Steiner points of degree 6}\label{sec:deg6}
In this section we prove part~\ref{33} of Theorem~\ref{thm:main}.
Proposition~\ref{lem:33} shows necessity of the angle condition on a Steiner point of degree $(3,3)$.
Then Theorem~\ref{thm6}, originally shown in \cite{Alfaro-Thesis}, shows sufficiency.
\begin{proposition}\label{lem:33}
Let $A=\set{a_1,a_2,a_3}$ and $B=\set{b_1,b_2,b_3}$ be two sets of three points each in the Euclidean plane such that $o\notin A\cup B$, and suppose that the network with edge set $\set{\arc{a_1}{o},\arc{a_2}{o},\arc{a_3}{o},\arc{o}{b_1},\arc{o}{b_2},\arc{o}{b_3}}$ is a shortest $(A,B)$-network.
Then the incoming and outgoing edges alternate, with consecutive directed edges at $60^\circ$ angles.
\end{proposition}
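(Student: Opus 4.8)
The plan is to normalise, recognise the incoming and outgoing edge-directions at $o$ as two ``Mercedes stars'', and then compare the given network with the competitor that runs once around the convex hull of the sources and sinks, which can tie it only when that hull is a regular hexagon.

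By Lemma~\ref{lem:local} I may move each $a_i$ and $b_j$ radially towards or away from $o$ without affecting the hypothesis, nor the conclusion, which concerns only the directions of the edges at $o$; so assume $\norm{a_i}_2=\norm{b_j}_2=1$ for all $i,j$. Then the given network has length exactly $6$, and $o$ is still a Steiner point of degree $(3,3)$. Since $o$ has three incoming directed edges, part~\ref{lem:FT3:1} of Lemma~\ref{lem:FT3} gives $\myangle a_i o a_k\ge 120^\circ$ for $i\ne k$, and likewise $\myangle b_i o b_k\ge 120^\circ$; Lemma~\ref{lem:FT2} then forces all of these angles to equal $120^\circ$. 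Hence $\set{a_1,a_2,a_3}$ and $\set{b_1,b_2,b_3}$ are each sets of three points on the unit circle centred at $o$, in pairwise angular position $120^\circ$. Two such triples are either equal or disjoint (such a triple is determined by any one of its directions), so $A\cup B$ is a set of $k$ points with $k\in\set{3,6}$, all lying on that unit circle and hence in convex position.

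Now take the competitor obtained by traversing the boundary of $\conv(A\cup B)$ once as a directed cycle, all edges oriented the same way round. Its vertex set is exactly $A\cup B$, and a directed cycle is strongly connected, so this is an $(A,B)$-network, of length equal to the perimeter $p$ of $\conv(A\cup B)$. Since the given network is shortest, $p\ge 6$. On the other hand, if the $k$ vertices subtend central arcs $\theta_1,\dots,\theta_k$ at $o$ (so $\theta_i>0$, $\sum_i\theta_i=360^\circ$, and each $\theta_i\le 120^\circ$ since consecutive directions differ by at most $120^\circ$), then $p=\sum_{i=1}^k 2\sin(\theta_i/2)$, and by concavity of $\sin$ on $[0^\circ,60^\circ]$ (Jensen's inequality) $p\le 2k\sin(180^\circ/k)$, which equals $6$ for $k=6$ and $3\sqrt3<6$ for $k=3$, with equality (in the case $k=6$) only if all $\theta_i$ are equal. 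The two bounds together force $k=6$ and $\conv(A\cup B)$ to be a regular hexagon. Then $A$ and $B$ are its two triangular ``halves''---they cannot be anything else, since three consecutive hexagon vertices would put two of the $a_i$, or two of the $b_j$, at angular distance $60^\circ$, contradicting the previous paragraph. Traversing the hexagon, the directions therefore alternate incoming, outgoing, incoming, $\dots$, so around $o$ the six edges occur, after relabelling, in cyclic order $\arc{a_1}{o},\arc{o}{b_1},\arc{a_2}{o},\arc{o}{b_2},\arc{a_3}{o},\arc{o}{b_3}$ with consecutive edges at $60^\circ$, which is the assertion.

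I expect the only real difficulty to be bookkeeping around degeneracies: confirming that after the normalisation $o$ still has degree $(3,3)$, so that Lemma~\ref{lem:FT3} applies; that every point of $A\cup B$ is a vertex of $\conv(A\cup B)$, so the boundary cycle meets all of them; and that the case $A=B$, where $k=3$, is harmless because a triangle inscribed in the unit circle already has perimeter $3\sqrt3<6$. It is worth stressing that no purely local argument will do here: any infinitesimal perturbation of $o$ is neutral, since $\sum_i a_i+\sum_j b_j=0$ automatically for two Mercedes stars of equal radius, so it is precisely the \emph{global} boundary-cycle competitor that upgrades local optimality of $o$ to the rigid $60^\circ$ structure.
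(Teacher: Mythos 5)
Your proof is correct, but the decisive step is genuinely different from the paper's. Both arguments start identically: normalise via Lemma~\ref{lem:local} so that the six neighbours of $o$ lie on the unit circle, and use Lemma~\ref{lem:FT3}.\ref{lem:FT3:1} together with Lemma~\ref{lem:FT2} to conclude that the incoming directions and the outgoing directions each form a $120^\circ$-star. The paper then finishes locally: letting $\theta\le 60^\circ$ be the smallest angle between an incoming and an outgoing edge, it shows that $\theta<60^\circ$ permits a strict shortening by rerouting through the three pairwise intersection points of the segments $a_ib_i$ (Fig.~\ref{fig:shorten33}), using only the triangle inequality. You instead play the star off against a global competitor, the directed boundary cycle of $\conv(A\cup B)$: its length is the perimeter of a polygon inscribed in the unit circle with $3$ or $6$ vertices, hence at most $3\sqrt3$ resp.\ $12\sin 30^\circ=6$ by Jensen, with equality only for the regular hexagon, so optimality of the length-$6$ star forces $k=6$, regularity, and (since neither $A$ nor $B$ can occupy two adjacent hexagon vertices) the alternating $60^\circ$ structure. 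Your details check out: the cycle is indeed an $(A,B)$-network, Jensen needs only concavity of $\sin$ on $[0^\circ,180^\circ]$ (so your bound $\theta_i\le 120^\circ$ is not even required), and the degenerate case where an incoming and an outgoing direction coincide is correctly absorbed into $k=3$ and killed by $3\sqrt3<6$. What each approach buys: the paper's rerouting is elementary, exhibits an explicit shorter network, and matches the style of its other shortening arguments; yours is quicker to state, handles the equality case transparently, and explains at the same time why the hexagonal cycle of Fig.~\ref{fig33}(b) ties the star. One side remark to temper: your claim that no purely local argument will do is belied by the paper's own proof, which is precisely such a rerouting; the vanishing first variation at $o$ only rules out perturbing the Steiner point itself, not other local modifications of the network.
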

\begin{proof}
By Lemma~\ref{lem:local} we may assume that each edge has unit length.
By Lemma~\ref{lem:FT3}.\ref{lem:FT3:1} we know that the incoming edges are at $120^\circ$ angles, and the outgoing edges are at $120^\circ$ angles.
Denote the smallest angle between an incoming and outgoing edge by $\theta$.
Then $\theta\leq 60^\circ$.
We show that $\theta=60^\circ$ by contradiction.
If $\theta< 60^\circ$, then the network can be shortened, as shown in Fig.~\ref{fig:shorten33}.
In particular, we then have that the intersections $s_1$ of $a_2b_2$ and $a_3b_3$, $s_2$ of $a_1b_1$ and $a_3b_3$, and $s_3$ of $a_1b_1$ and $a_2b_2$ are distinct and distinct from $o$, and it follows from the triangle inequality that the network with edge set
\[\set{\arc{a_1}{s_2},\arc{a_2}{s_3},\arc{a_3}{s_1},\arc{s_3}{b_1},\arc{s_1}{b_2},\arc{s_2}{b_3},\arc{s_1}{s_2},\arc{s_2}{s_3},\arc{s_3}{s_1}}\]
is strictly shorter than the original network.
\begin{figure}
\definecolor{qqwuqq}{rgb}{0.,0.39215686274509803,0.}
\definecolor{qqqqff}{rgb}{0.,0.,1.}
\definecolor{ffqqqq}{rgb}{1.,0.,0.}
\centering
\begin{tikzpicture}[line cap=round,line join=round,>=Stealth,scale=1.1]
\draw [shift={(4.402975002933996,3.3969728569752)},color=qqwuqq,fill=qqwuqq,fill opacity=0.1] (0,0) -- (90.:0.3454071453719966) arc (90.:120.:0.3454071453719966) -- cycle;
\draw [->,line width=1pt] (4.402975002933996,3.3969728569752)-- (2.168629461170144,2.106972856975201);
\draw [->,line width=1pt] (4.402975002933996,3.3969728569752)-- (6.637320544697848,2.106972856975201);
\draw [->,line width=1pt] (4.402975002933996,3.3969728569752)-- (4.402975002933996,5.9769728569752);
\draw [->,line width=1pt] (3.1129750029339958,5.631318398739052)-- (4.402975002933996,3.3969728569752);
\draw [->,line width=1pt] (3.1129750029339953,1.1626273152113487)-- (4.402975002933996,3.3969728569752);
\draw [->,line width=1pt] (6.982975002933996,3.396972856975199)-- (4.402975002933996,3.3969728569752);
\draw [->,gray,line width=1pt] (9.843523677209324,3.4117549960140092)-- (7.609178135445473,2.12175499601401);
\draw [->,gray,line width=1pt] (9.843523677209324,3.4117549960140092)-- (12.077869218973177,2.12175499601401);
\draw [->,gray,line width=1pt] (9.843523677209324,3.4117549960140092)-- (9.843523677209324,5.991754996014009);
\draw [->,gray,line width=1pt] (8.553523677209325,5.646100537777861)-- (9.843523677209324,3.4117549960140092);
\draw [->,gray,line width=1pt] (8.553523677209323,1.1774094542501583)-- (9.843523677209324,3.4117549960140092);
\draw [->,gray,line width=1pt] (12.423523677209324,3.4117549960140074)-- (9.843523677209324,3.4117549960140092);
\draw [->,line width=1pt] (8.553523677209325,5.646100537777861)-- (9.497869218973177,4.701754996014009);
\draw [->,line width=1pt] (9.497869218973177,4.701754996014009)-- (9.843523677209324,5.991754996014009);
\draw [->,line width=1pt] (8.899178135445474,2.467409454250158)-- (7.609178135445473,2.12175499601401);
\draw [->,line width=1pt] (8.553523677209323,1.1774094542501583)-- (8.899178135445474,2.467409454250158);
\draw [->,line width=1pt] (12.423523677209324,3.4117549960140074)-- (11.133523677209327,3.0661005377778605);
\draw [->,line width=1pt] (11.133523677209327,3.0661005377778605)-- (12.077869218973177,2.12175499601401);
\draw [->,line width=1pt] (9.497869218973177,4.701754996014009)-- (11.133523677209327,3.0661005377778605);
\draw [->,line width=1pt] (11.133523677209327,3.0661005377778605)-- (8.899178135445474,2.467409454250158);
\draw [->,line width=1pt] (8.899178135445474,2.467409454250158)-- (9.497869218973177,4.701754996014009);
\draw [fill=qqqqff] (4.402975002933996,5.9769728569752) circle (2.5pt);
\draw (4.547280781505375,6.25) node {$b_2$};
\draw [fill=white] (4.402975002933996,3.3969728569752) circle (2.5pt);
\draw (4.57,3.6) node {$o$};
\draw [fill=qqqqff] (2.168629461170144,2.106972856975201) circle (2.5pt);
\draw (2.0948900493641984,2.3880071126328284) node {$b_1$};
\draw [fill=qqqqff] (6.637320544697848,2.106972856975201) circle (2.5pt);
\draw (6.8,2.4) node {$b_3$};
\draw [fill=ffqqqq] (3.1129750029339958,5.631318398739052) circle (2.5pt);
\draw (3.2117064860669875,5.83056499484041) node {$a_3$};
\draw (4.27,3.95) node {$\theta$};
\draw [fill=ffqqqq] (3.1129750029339953,1.1626273152113487) circle (2.5pt);
\draw (2.9,1.3632992480292343) node {$a_2$};
\draw [fill=ffqqqq] (6.982975002933996,3.396972856975199) circle (2.5pt);
\draw (7.11,3.65) node {$a_1$};

\draw [fill=qqqqff] (9.843523677209324,5.991754996014009) circle (2.5pt);
\draw (9.993200106870521,6.25) node {$b_2$};
\draw [gray,fill=white] (9.843523677209324,3.4117549960140092) circle (2.5pt);
\draw [draw=gray] (10,3.6) node {$o$};
\draw [fill=qqqqff] (7.609178135445473,2.12175499601401) circle (2.5pt);
\draw (7.5408093747293465,2.3995206841452283) node {$b_1$};
\draw [fill=qqqqff] (12.077869218973177,2.12175499601401) circle (2.5pt);
\draw (12.3,2.4) node {$b_3$};
\draw [fill=ffqqqq] (8.553523677209325,5.646100537777861) circle (2.5pt);
\draw (8.657625811432135,5.84207856635281) node {$a_3$};
\draw [fill=ffqqqq] (8.553523677209323,1.1774094542501583) circle (2.5pt);
\draw (8.3,1.374812819541634) node {$a_2$};
\draw [fill=ffqqqq] (12.423523677209324,3.4117549960140074) circle (2.5pt);
\draw (12.5,3.65) node {$a_1$};
\draw [fill=white] (9.497869218973177,4.701754996014009) circle (2.5pt);
\draw (9.42,5.06) node {$s_1$};
\draw [fill=white] (8.899178135445474,2.467409454250158) circle (2.5pt);
\draw (8.7,2.6) node {$s_3$};
\draw [fill=white] (11.133523677209327,3.0661005377778605) circle (2.5pt);
\draw (11.55,2.95) node {$s_2$};
\draw[color=black] (7.4,4.4) node {\LARGE$\leadsto$};
\end{tikzpicture}
\caption{Shortening a network with a Steiner point of degree $(3,3)$ if some angle is $<60^\circ$}\label{fig:shorten33}
\end{figure}
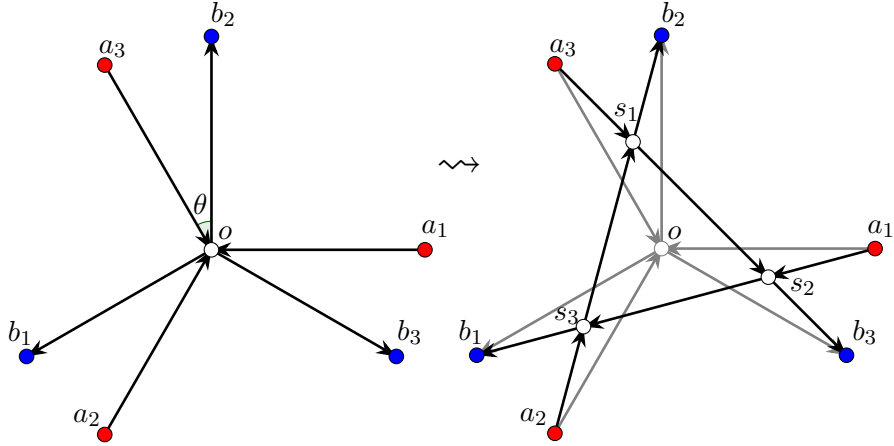
\end{proof}

\begin{theorem}[{\cite[Theorem~3.2]{Alfaro-Thesis}}]\label{thm6}
Let $A=\set{a_1,a_2,a_3}$ and $B=\set{b_1,b_2,b_3}$ be sets of points in the Euclidean plane such that $\myangle a_1ob_2 = \myangle b_2oa_3 = \myangle a_3ob_1 = \myangle b_1oa_2 = \myangle a_2ob_3 = \myangle b_3oa_1 = 60^\circ$.
Then the network with edge set $\set{\arc{a_1}{o},\arc{a_2}{o},\arc{a_3}{o},\arc{o}{b_1},\arc{o}{b_2},\arc{o}{b_3}}$ is shortest among all directed $(A,B)$-networks.
\end{theorem}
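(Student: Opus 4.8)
The plan is to adapt the proof of Theorem~\ref{thm4}, replacing the parallelogram norm $\norm{\cdot}_{1(\theta)}$ by the hexagonal norm $\norm{\cdot}_H$, whose unit ball $B_H$ is the regular hexagon circumscribed about the Euclidean unit circle. First I would normalize: translating so that $o$ is the origin and using the rotation invariance of $\norm{\cdot}_2$, the six angle hypotheses force the directions $a_1/\norm{a_1}_2,\ b_2/\norm{b_2}_2,\ \dots,\ b_3/\norm{b_3}_2$ to be equally spaced at $60^\circ$ around $o$ with sources and sinks alternating, so after a rotation (and optionally a rescaling of each point along its ray from $o$, which Lemma~\ref{lem:local} permits) I may assume that $A$ and $B$ are the two antipodal triples of contact points of $B_H$ with the unit circle, as in Figure~\ref{fig3}, with $b_i=-a_i$. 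Write $N_0$ for the star network in the statement. Since every $a_i$ and $b_j$ lies on $\partial B_H$, we have $\norm{N_0}_H=\norm{N_0}_2$, so by Lemma~\ref{lemma2} it suffices to prove $\norm{N}_H\geq\norm{N_0}_H$ for an arbitrary $(A,B)$-network $N$.

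Given such an $N$, I would replace each directed edge by a broken edge using Lemma~\ref{lemma3} with $B=B_H$, obtaining an $(A,B)$-network $N'$ with $\norm{N'}_H=\norm{N}_H$ in which every edge is parallel to one of the six vertex-directions of $B_H$; these come in three antipodal pairs. For $i=1,2,3$ let $\varphi_i=\ipr{\cdot}{a_i/\norm{a_i}_2}$ be orthogonal projection onto the $a_i$-direction, and let $D_i$ be the set of vertex-directions $d$ of $B_H$ for which $\varphi_i$ is negative (equivalently, for which $\varphi_i$ of the $\norm{\cdot}_H$-unit vector in direction $d$ equals $-1$, since that unit vector is itself a vertex of $B_H$). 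The combinatorial core of the proof, which I would verify by a direct computation with the regular hexagon and which in the notation of Figure~\ref{fig3} reads simply $D_i=\set{a_i',b_i'}$, is that $\abs{D_i}=2$ for each $i$ and that $D_1,D_2,D_3$ partition the six vertex-directions. Granting this, fix an $(a_i,b_i)$-path $P_i$ in $N'$. Along $P_i$ the functional $\varphi_i$ changes from $\norm{a_i}_2$ to $-\norm{b_i}_2$; since an edge of $P_i$ whose direction is outside $D_i$ changes $\varphi_i$ by a nonnegative amount, while an edge of $\norm{\cdot}_H$-length $t$ whose direction is in $D_i$ decreases $\varphi_i$ by exactly $t$, the edges of $P_i$ with direction in $D_i$ have total $\norm{\cdot}_H$-length at least $\norm{a_i}_2+\norm{b_i}_2$. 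Because the $D_i$ are pairwise disjoint and together exhaust all six directions, every edge of $N'$ lies in exactly one $D_i$, so summing these three bounds over $i$ with no edge counted twice gives
\[ \norm{N}_H=\norm{N'}_H\ \geq\ \sum_{i=1}^3\bigl(\norm{a_i}_2+\norm{b_i}_2\bigr)\ =\ \norm{N_0}_2\ =\ \norm{N_0}_H. \]
Together with $\norm{N}_2\geq\norm{N}_H$ (Lemma~\ref{lemma2}) this yields $\norm{N}_2\geq\norm{N_0}_2$, so $N_0$ is shortest, as claimed.

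The step I expect to be the main obstacle is the combinatorial fact above: that orthogonal projection onto each of the three source directions is $-1$ on exactly two of the six edge-directions of $B_H$, and that the three resulting pairs tile those six directions. This is only a finite verification, but it is precisely where the geometry of the regular hexagon and the alternation of sources and sinks are used, playing the role of the four coordinate directions and the four terminal pairs in Theorem~\ref{thm4}. A secondary point needing care is the reduction to the standard configuration, combining a translation, a rotation, and a relabelling, and using the rotation invariance of $\norm{\cdot}_2$ together with the $60^\circ$-rotational symmetry of $B_H$.
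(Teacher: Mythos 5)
Your proposal is correct and follows essentially the same route as the paper's proof: reduce via Lemma~\ref{lem:local} to the regular hexagon of Fig.~\ref{fig3}, use $\norm{\cdot}_2\geq\norm{\cdot}_H$ and Lemma~\ref{lemma3} to break edges into the six vertex-directions of $B_H$, and bound the edges in each antipodal pair of directions by $2$ along the corresponding $(a_i,b_i)$-path, summing over the disjoint pairs. The only cosmetic difference is that you track the three linear functionals $\varphi_i$ explicitly (verifying $D_i=\{a_i',b_i'\}$ by direct computation), whereas the paper projects onto the $x$-axis via Lemma~\ref{lemma4} and invokes symmetry for the other two pairs.
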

\begin{proof}
By Lemma~\ref{lem:local}, we may assume that the lengths $\norm{a_i-o}_2$ and $\norm{b_i-o}_2$ are all the same.
Thus $a_1b_2a_3b_1a_2b_3$ is a regular hexagon.
We may also assume without loss of generality that $o$ is the origin and $\norm{a_i}_2=\norm{b_i}_2=1$, $i=1,2,3$, as in Fig.~\ref{fig3}.
Denote the $(A,B)$-network described in the statement of the theorem by $N_0$.
Let $N$ be any given $(A,B)$-network.
We have to show that $\norm{N}_2\geq\norm{N_0}_2$.

By Lemma~\ref{lemma2}, $\norm{N}_2\geq\norm{N}_H$, and by Lemma~\ref{lemma3}, we can replace each edge of $N$ by a broken edge consisting of two edges parallel to one of the main diagonals $a_1'b_2'$, $a_2'b_3'$, $a_3'b_1'$ of $B_H$ (Fig.~\ref{fig3}) to create a new network $N'$ with all edges in one of the $6$ directions $a_i', b_i'$, and of the same length $\norm{N}_H=\norm{N'}_H$.

Consider any $(a_1,b_1)$-path $P$ in $N'$.
The only vectors on this path with negative $x$-component are those in the directions of $a_1'$ and $b_1'$.
If we project this path orthogonally onto the $x$-axis, the $\norm{\cdot}_H$-distance does not increase, by Lemma~\ref{lemma4}.
It follows that the total $\norm{\cdot}_H$-length of the edges in the directions of $a_1'$ and $b_1'$ is at least $2$.
By symmetry, the total $\norm{\cdot}_H$-length of the edges on an $(a_2,b_2)$-path in the directions of $a_2'$ and $b_2'$ is at least $2$, and the total $\norm{\cdot}_H$-length of the edges on an $(a_3,b_3)$-path in the directions of $a_3'$ and $b_3'$ is at least $2$.
Since we did not count any edge more than once, we obtain that $\norm{N'}_H\geq 6$.
If we put all the inequalities together, we obtain $\norm{N}_2\geq 6 = \norm{N_0}_2$.
\end{proof}

\begin{corollary}\label{cor3}
Consider the normed plane $(\bR^2,\norm{\cdot}_h)$ with unit ball the regular hexagon $B_h=a_1b_2a_3b_1a_2b_3$.
Let $A=\set{a_1,a_2,a_3}$ and $B=\set{b_1,b_2,b_3}$.
Then the network with edge set $\set{\arc{a_1}{o},\arc{a_2}{o},\arc{a_3}{o},\arc{o}{b_1},\arc{o}{b_2},\arc{o}{b_3}}$ is shortest among all directed $(A,B)$-networks in $(\bR^2,\norm{\cdot}_h)$.
\end{corollary}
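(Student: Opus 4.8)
The plan is to deduce this from the Euclidean statement of Theorem~\ref{thm6} in exactly the way Corollary~\ref{cor1} was deduced from Theorem~\ref{thm4}: compare the hexagonal norm $\norm{\cdot}_h$ with the Euclidean norm, using that $\norm{\cdot}_h$ dominates $\norm{\cdot}_2$ while the two norms agree on the candidate network.

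First I would record that the points $a_1,a_2,a_3,b_1,b_2,b_3$ are simultaneously vertices of the unit ball $B_h$ and points of the Euclidean unit circle (since $B_h$ is inscribed in that circle), so $\norm{a_i}_h=\norm{b_i}_h=1=\norm{a_i}_2=\norm{b_i}_2$ for each $i$. Consequently the network $N_0$ from the statement, whose six edges are the segments $\arc{a_i}{o}$ and $\arc{o}{b_i}$, has the same length measured in either norm: $\norm{N_0}_h=6=\norm{N_0}_2$. Next I would note that $A=\set{a_1,a_2,a_3}$ and $B=\set{b_1,b_2,b_3}$ are precisely the alternating vertices of the regular hexagon $B_h$, so the angle conditions $\myangle a_1ob_2=\myangle b_2oa_3=\myangle a_3ob_1=\myangle b_1oa_2=\myangle a_2ob_3=\myangle b_3oa_1=60^\circ$ of Theorem~\ref{thm6} are satisfied; that theorem therefore tells us $N_0$ is a shortest $(A,B)$-network in the Euclidean plane, i.e.\ $\norm{N}_2\ge 6$ for every $(A,B)$-network $N$.

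Finally, given an arbitrary $(A,B)$-network $N$ in $(\bR^2,\norm{\cdot}_h)$, Lemma~\ref{lemma2} (equivalently, the inclusion $B_h\subseteq B_2$) gives $\norm{N}_h\ge\norm{N}_2$, and chaining the two inequalities yields $\norm{N}_h\ge\norm{N}_2\ge 6=\norm{N_0}_h$. Hence $N_0$ is shortest among all directed $(A,B)$-networks in $(\bR^2,\norm{\cdot}_h)$, as claimed. I do not expect a genuine obstacle here: the only steps requiring a little care are checking that the hexagon $B_h$ named in the corollary is the one defined in Section~\ref{sec:basic} (so that the $60^\circ$ conditions transfer verbatim to the hypotheses of Theorem~\ref{thm6}) and that $N_0$ has equal Euclidean and $h$-length; both follow at once from the explicit coordinates of the vertices of $B_h$.
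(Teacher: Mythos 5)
Your proposal is correct and follows essentially the same route as the paper: the paper likewise combines the inclusion $B_h\subseteq B_2$ (Lemma~\ref{lemma2}) giving $\norm{N}_h\geq\norm{N}_2$ with Theorem~\ref{thm6} and the equality $\norm{N_0}_h=\norm{N_0}_2$ on the candidate network. Your additional verification of the $60^\circ$ angle hypotheses and the unit lengths is just a slightly more explicit spelling-out of what the paper leaves implicit.
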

\begin{proof}
The proof is similar to that of Corollary~\ref{cor1}.
Since $B_h\subseteq B_2$, we have that $\norm{x}_2\leq\norm{x}_h$ for all $x\in\bR^2$.
Let $N_0$ denote the network described in the corollary, and let $N$ be any $(A,B)$-network.
By Lemma~\ref{lemma2}, $\norm{N}_2\leq\norm{N}_h$.
By Theorem~\ref{thm6}, $\norm{N}_2\geq\norm{N_0}_2$.
It follows that $\norm{N}_h\geq\norm{N_0}_2=\norm{N_0}_h$.
\end{proof}

\section{Steiner points of degree 5}\label{sec:deg5}
\begin{theorem}
Let $a_1, a_2, a_3, b_1, b_2, s$ be points in the Euclidean plane.
Let $A=\{a_1,a_2,a_3\}$ and $B=\{b_1,b_2\}$.
Then the network with edges $\arc{a_i}{s}$, $i=1,2,3$ and $\arc{s}{b_i}$, $i=1,2$, is a shortest $(A,B)$-network if and only if $\myangle a_isa_j=120^\circ$ for all $1\leq i < j\leq 3$ and $\myangle b_1sb_2=180^\circ$.
\end{theorem}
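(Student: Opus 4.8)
I would handle the ``only if'' direction first. Suppose the network $N_0$ with edges $\arc{a_i}{s}$ ($i=1,2,3$) and $\arc{s}{b_1},\arc{s}{b_2}$ is shortest. By Lemma~\ref{lem:FT3}.\ref{lem:FT3:1} the three angles between pairs of incoming edges at $s$ are each $\geq 120^\circ$, and since they sum to $360^\circ$ they all equal $120^\circ$; by Lemma~\ref{lem:FT} this forces $s$ to be the Fermat--Torricelli point of $\{a_1,a_2,a_3\}$, so the unit vectors $e_i=(a_i-s)/\norm{a_i-s}_2$ satisfy $e_1+e_2+e_3=0$. Writing $f_j=(b_j-s)/\norm{b_j-s}_2$, the geometric digraph obtained from $N_0$ by moving the embedded position of $s$ to $s+\epsilon v$ is again an $(A,B)$-network, of length $\norm{N_0}_2-\epsilon\ipr{e_1+e_2+e_3+f_1+f_2}{v}+O(\epsilon^2)=\norm{N_0}_2-\epsilon\ipr{f_1+f_2}{v}+O(\epsilon^2)$; minimality of $\norm{N_0}_2$ forces $f_1+f_2=0$, i.e.\ $b_1,s,b_2$ are collinear with $s$ between $b_1$ and $b_2$, which is $\myangle b_1sb_2=180^\circ$.

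\textbf{Sufficiency, reduction.} For the ``if'' direction, assume $\myangle a_isa_j=120^\circ$ for $1\leq i<j\leq 3$ and $\myangle b_1sb_2=180^\circ$. By Lemma~\ref{lem:local} I would rescale along the rays from $s$ so that $s=o$ and $\norm{a_i}_2=\norm{b_j}_2=1$; then $a_1,a_2,a_3$ are the vertices of an equilateral triangle inscribed in the unit circle, $b_1=-b_2$ is a diameter, and $\norm{N_0}_2=3+2=5$. So the task becomes to show $\norm{N}_2\geq 5$ for every $(A,B)$-network $N$. Write $b_1=f$, $b_2=-f$, and let $e_1,e_2,e_3$ be the unit vectors towards $a_1,a_2,a_3$.

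\textbf{Sufficiency, the plan.} I would run the norm method of Theorems~\ref{thm4} and~\ref{thm6}. The plan is to choose, depending on the configuration, a norm $\norm{\cdot}_*$ whose unit ball $B_*$ is a centrally symmetric polygon with $B_2\subseteq B_*$, chosen so that $B_*$ touches $B_2$ at (at least) the five directions $\pm e_1,\pm e_2,\pm e_3,\pm f$ of the edges of $N_0$. Then $\norm{N}_*\leq\norm{N}_2$ for all $N$, while $\norm{N_0}_*=\norm{N_0}_2=5$, so it suffices to prove $\norm{N}_*\geq 5$. Given $N$, I would replace each edge by a broken edge (Lemma~\ref{lemma3}) to obtain an $(A,B)$-network $N'$ with $\norm{N'}_*=\norm{N}_*$ all of whose edges point along one of the finitely many main-diagonal directions of $B_*$. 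To each source--sink pair $(a_i,b_j)$ I would then assign a facet normal $\nu_{ij}$ of $B_*$ and bound from below the total $\norm{\cdot}_*$-length of the broken edges of some $(a_i,b_j)$-path that run backward along $\nu_{ij}$ by the displacement $\ipr{a_i-b_j}{\nu_{ij}}$ (which by the touching property is achieved at full rate only by the broken edges along the facet opposite $\nu_{ij}$); choosing the $\nu_{ij}$ so that the six resulting families of broken edges are pairwise disjoint and the six displacements sum to $5$ would complete the argument.

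\textbf{The main obstacle.} The crux --- and what makes this the hardest case --- is that, unlike the regular hexagon of Theorem~\ref{thm6}, a polygon $B_*$ forced to carry facets normal to $\pm f$ acquires extra vertices whose broken edges can run backward along a chosen axis $\nu_{ij}$ at \emph{less} than the full rate, so the clean ``total length $\geq$ displacement'' estimate for a single family can fail. I expect to resolve this by a case analysis on the position of $f$ relative to the triangle $a_1a_2a_3$ --- equivalently, on the combinatorial type of $B_*$ and on which of $a_1,a_2,a_3$ lie on each side of the line $b_1b_2$ --- and in each case either inserting further tangent facets into $B_*$ so that the relevant projection axes regain the full-rate property, or splitting one pair's contribution between two projection axes. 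Verifying in every such configuration that the displacement bounds add up to exactly $5$ with no broken edge counted twice is where the real work lies.
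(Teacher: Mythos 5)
Your necessity argument is fine: deducing the three $120^\circ$ angles from Lemma~\ref{lem:FT3}\,(i) (really Lemma~\ref{lem:FT2}), and then killing $\myangle b_1sb_2\neq 180^\circ$ by a first variation of the Steiner point (the perturbed configuration is still an $(A,B)$-network, so $f_1+f_2=0$) is a legitimate alternative to the paper's local surgery, which instead replaces $\arc{a_1}{s},\arc{s}{b_1},\arc{s}{b_2}$ by edges through the intersection point $s'$ of $b_1b_2$ with $a_1s$ (Fig.~\ref{fig5}).

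The sufficiency direction, however, is where the theorem lives, and there you have only a plan whose decisive step you yourself defer (``where the real work lies''). That is a genuine gap, and there is a concrete reason to doubt the plan can be completed in the form stated. For the bound $\norm{N}_*\geq 5$ via directional counting to be tight at $N_0$, every edge of $N_0$ must be counted at full rate, which forces the dual directions used to include $e_1,e_2,e_3$ and $\pm f$; moreover the class of edge directions counted against a functional $\nu$ is forced to be the \emph{entire} open half-plane $\{u:\ipr{u}{\nu}<0\}$ (omitting any backward direction invalidates the displacement bound). Now any centrally symmetric polygon $B_*\supseteq B_2$ touching $B_2$ at $e_1$ and at $f$ has a vertex strictly between these two touching points, hence (for $f$ not parallel to an $e_i$) at angular distance less than $30^\circ$ from $e_1$; that vertex direction makes an obtuse angle with both $e_2$ and $e_3$, so the backward classes attached to $e_2$ and $e_3$ necessarily overlap, and inserting further tangent facets only creates more vertices inside the same arc. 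So the disjointness you need fails generically, and no choice of extra facets repairs it; note also that a single-norm argument would in fact prove the stronger statement that $N_0$ is optimal in the polygonal norm as well, which is exactly what is unclear here. The paper abandons the norm method for this case and argues directly about a shortest competitor $G$: it reorients cycles so that an $(a_2,b_2)$-path bounds a region $\myGamma$ of minimal area, analyses how $(a_3,b_1)$-, $(a_1,b_2)$- and $(a_1,b_1)$-paths meet it (four cases), and in each case uses the triangle inequality to replace the perimeter of a crossing quadrilateral by its diagonals, splitting $G$ into an undirected tree joining $a_1,a_2,a_3$ (length $\geq 3$ by Lemma~\ref{lem:FT}) and an edge-disjoint $b_1b_2$-path (length $\geq 2$); the residual pentagon case is excluded by an angle-sum contradiction that uses the optimality of $G$ via Lemmas~\ref{lem:FT} and \ref{lem:FT3}. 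Your sufficiency half would need to be replaced or completed by an argument of this kind.
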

\begin{proof}
We first show that the condition is necessary for a Steiner point $s$ of degree $(3,2)$.
By Lemmas~\ref{lem:FT2} and \ref{lem:FT3}, the three incoming directed edges $\arc{a_i}{s}$ are pairwise at $120^\circ$ angles, and the two outgoing directed edges $\arc{s}{b_i}$ have to be at an angle of $\geq 120^\circ$.
Therefore, $\arc{s}{b_1}$ and $\arc{s}{b_2}$ lie in different (closed) angles spanned by pairs of incoming edges.
If $\myangle b_1sb_2\neq 180^\circ$, then, assuming without loss of generality that the segment $b_1b_2$ intersects the segment $a_1s$ at a point $s'$, the network can be shortened by replacing $\arc{s}{b_1}$, $\arc{s}{b_2}$ and $\arc{a_1}{s}$ by $\arc{s}{s'}$, $\arc{a_1}{s'}$, $\arc{s'}{b_1}$, $\arc{s'}{b_2}$ as in Fig.~\ref{fig5}.
\begin{figure}
\definecolor{qqqqff}{rgb}{0.,0.,1.}
\definecolor{ffffff}{rgb}{1.,1.,1.}
\definecolor{ffqqqq}{rgb}{1.,0.,0.}
\centering
\begin{tikzpicture}[line cap=round,line join=round,>=Stealth,x=0.35cm,y=0.35cm]
\draw [->,line width=1pt] (2.,4.) -- (1.9982402614052646,-1.2350546258915038);
\draw [->,line width=1pt] (-2.536329904113388,-3.8510579605101936) -- (1.9982402614052646,-1.2350546258915043);
\draw [->,line width=1pt] (1.9982402614052646,-1.2350546258915043) -- (-2.6429330267308284,0.14657871045583426);
\draw [->,line width=1pt] (6.53105068832918,-3.854105917164317) -- (1.9982402614052646,-1.2350546258915043);
\draw [->,line width=1pt] (1.9982402614052646,-1.2350546258915043) -- (6.703493991243839,0.49146531628515744);
\draw [line width=1pt,dash pattern=on 4pt off 4pt] (-2.6429330267308284,0.14657871045583418)-- (6.703493991243839,0.49146531628515755);
\draw [line width=1pt] (1.9982402614052646,-1.2350546258915043)-- (2.,4.);
\draw [->,line width=1pt] (11.4833106228486,-3.9027909513845724) -- (16.017880788367254,-1.2867876167658956);
\draw [->,line width=1pt] (20.55069121529116,-3.9058389080387155) -- (16.017880788367254,-1.2867876167658956);
\draw [line width=1pt] (11.37670750023117,0.09484571958143506)-- (20.723134518205818,0.43973232541075846);
\draw [line width=1pt] (16.017880788367254,-1.2867876167658956)-- (16.019640526962,3.948267009125604);
\draw [line width=1pt,dash pattern=on 4pt off 4pt] (11.37670750023117,0.09484571958143506)-- (16.017880788367254,-1.2867876167658956);
\draw [line width=1pt,dash pattern=on 4pt off 4pt] (16.017880788367254,-1.2867876167658956)-- (20.723134518205818,0.43973232541075846);
\draw [->,line width=1pt] (16.017880788367254,-1.2867876167658956) -- (16.018402792854523,0.26612598868192805);
\draw [->,line width=1pt] (16.018402792854523,0.2661259886819281) -- (11.37670750023117,0.09484571958143506);
\draw [->,line width=1pt] (16.018402792854523,0.2661259886819281) -- (20.723134518205818,0.43973232541075846);
\draw [->,line width=1pt] (16.019640526962,3.948267009125604) -- (16.018402792854523,0.26612598868192805);
\draw [fill=ffqqqq] (6.53105068832918,-3.854105917164317) circle (2.5pt);
\draw [fill=ffqqqq] (2.,4.) circle (2.5pt) node[right=2pt] {$a_1$};
\draw [fill=ffqqqq] (-2.536329904113388,-3.8510579605101936) circle (2.5pt);
\draw [fill=ffffff] (1.9982402614052646,-1.2350546258915043) circle (2.5pt) node[below=2pt] {$s$};
\draw [fill=qqqqff] (-2.6429330267308284,0.14657871045583418) circle (2.5pt) node[left=1pt] {$b_1$};
\draw [fill=qqqqff] (6.703493991243839,0.49146531628515755) circle (2.5pt) node[right=2pt] {$b_2$};
\draw [fill=ffqqqq] (20.55069121529116,-3.9058389080387155) circle (2.5pt);
\draw [fill=ffqqqq] (16.019640526962,3.948267009125604) circle (2.5pt) node[right=2pt] {$a_1$};
\draw [fill=ffqqqq] (11.4833106228486,-3.9027909513845724) circle (2.5pt);
\draw [fill=ffffff] (16.017880788367254,-1.2867876167658956) circle (2.5pt) node[below=2pt] {$s$};
\draw [fill=qqqqff] (11.37670750023117,0.09484571958143506) circle (2.5pt) node[left=1pt] {$b_1$};
\draw [fill=qqqqff] (20.723134518205818,0.43973232541075846) circle (2.5pt) node[right=2pt] {$b_2$};
\draw [fill=ffffff] (16.018402792854523,0.2661259886819281) circle (2.5pt) node[above right=2pt] {$s'$};
\draw[color=black] (8.9,2) node {\LARGE$\leadsto$};
\end{tikzpicture}
\caption{If $b_1,s,b_2$ are not collinear, the network can be shortened.}\label{fig5}
\end{figure}
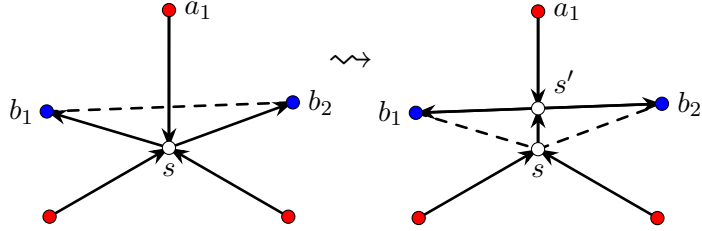
Therefore, $\myangle b_1sb_2 = 180^\circ$.

To show the converse, suppose that $a_1,a_2,a_3$ satisfy $\myangle a_isa_j=120^\circ$ for all distinct $i,j$, and $b_1,b_2$ satisfy $\myangle b_1sb_2=180^\circ$.
By Lemma~\ref{lem:local}, we may assume that the neighbours of $s$ lie on the unit circle centred at $s$.
By relabelling, we may also assume that $b_1$ lies inside $\myangle a_1 o a_2$ and $b_2$ inside $\myangle b_2 o a_3$.
By a limit argument we may assume that $A=\{a_1,a_2,a_3\}$ and $B=\{b_1,b_2\}$ are disjoint.
We want to show that the network with directed edges $\arc{a_i}{s}$, $i=1,2,3$ and $\arc{o}{b_j}$, $j=1,2$, is a shortest $(A,B)$-network.
Let $G$ be a simple shortest $(A,B)$-network.
We will show that $\norm{G}_2\geq 5$.
(We will only use the assumption that $G$ is shortest in Case~\ref{case2.3} in the last part of the proof).

If the underlying undirected graph of $G$ has a cycle, then we can reorient some edges of the cycle in $G$ so that the resulting digraph stays an $(A,B)$-network.
Among all reorientations of $G$ that are still $(A,B)$-networks, we choose one with an $(a_2,b_2)$-path $P$ that minimizes the area of the region $\myGamma$ bounded by $P$ and the segments $a_2a_3$ and $a_3b_2$ (Fig.~\ref{fig52}).
\begin{figure}
\centering
\definecolor{qqqqff}{rgb}{0.,0.,1.}
\definecolor{ffffff}{rgb}{1.,1.,1.}
\definecolor{ffqqqq}{rgb}{1.,0.,0.}
\begin{tikzpicture}[scale=0.8,line cap=round,line join=round,>=Stealth,x=1cm,y=1cm]
\fill [blue!15!white] (-1.62,-2.08) .. controls (0,0) and (1,-1) .. (2.12,-0.5) .. controls (3.24, 0) and (3,0.5) .. (4.32,0.64) .. controls (5.64,0.78) and (6.5, 0) .. (7.6,1.26) -- (7.,-2.14) -- cycle; 
\draw [blue!90!black] (3.6,-1.1) node {\LARGE $\myGamma$};
\draw[blue!90!black,dashed,line width=1.2pt] (7.6,1.26) -- (7.,-2.14) -- (-1.62,-2.08);

\draw [->,line width=1.2pt] (-1.62,-2.08) .. controls (0,0) and (1,-1) .. (2.12,-0.5) node [midway,below] {$P_1$}; 
\draw [->,line width=1.2pt] (2.12,-0.5) .. controls (3.24, 0) and (3,0.5) .. (4.32,0.64);
\draw [->,line width=1.2pt] (4.32,0.64) .. controls (5.64,0.78) and (6.5, 0) .. (7.6,1.26) node [midway,above] {$P_3$}; 
\draw [->,line width=1.2pt] (7.,-2.14) .. controls (5,-1) and (3,-1.5) .. (2.12,-0.5) node [near start,above] {$Q_1$}; 
\draw [->,line width=1.2pt] (4.32,0.64) .. controls (0.92,4.2) and (-0.6,-3.8).. (-2.18,-0.52) node [above,near start] {$Q_3$}; 

\draw [->,line width=1.2pt] (2.7419615242270683,5.355138980621861) .. controls (5.1,4) and (2,3) .. (5,1.6) node [midway,right] {$R$}; 
\draw [fill=ffqqqq] (-1.62,-2.08) circle (3pt); 
\draw[color=black] (-2.11,-1.98) node {$a_2$};
\draw [fill=ffffff] (2.12,-0.5) circle (3pt); 
\draw[color=black] (2,-0.15) node {$p$};
\draw [fill=ffqqqq] (7.,-2.14) circle (3pt); 
\draw[color=black] (7.5,-2) node {$a_3$};
\draw [fill=ffffff] (4.32,0.64) circle (3pt); 
\draw[color=black] (4.5,1) node {$q$};
\draw[color=black] (2.8,0.45) node {$P_2$};
\draw [fill=qqqqff] (-2.18,-0.52) circle (3pt);
\draw[color=black] (-2.5,-0.1) node {$b_1$};
\draw [fill=qqqqff] (7.6,1.26) circle (3pt); 
\draw[color=black] (7.4,1.7) node {$b_2$};
\draw [fill=ffqqqq] (2.7419615242270683,5.355138980621861) circle (3pt);
\draw[color=black] (3.2,5.46) node {$a_1$};
\end{tikzpicture}
\caption{Analysing the network $G$}\label{fig52}
\end{figure}
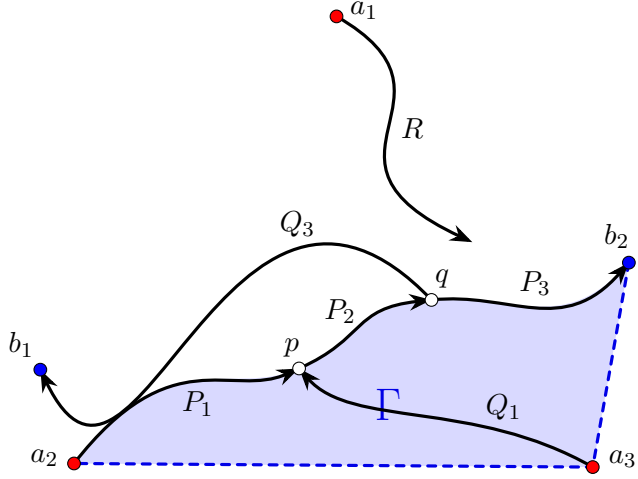
Let $Q$ be an $(a_3,b_1)$-path.
By Lemma~\ref{lem:convex}, $Q$ crosses $P$.
Let $p$ be the first vertex on $Q$ that is also on $P$.
Let $Q_1$ be the subpath of $Q$ from from $a_3$ to $p$.
Let $P_1$ be the part of $P$ from $a_2$ to $p$.
Let $q$ be the last vertex on $Q$ that is also on the part of $P$ from $p$ to $b_2$.
Let $P_2$ be the part of $P$ from $p$ to $q$, and $P_3$ the part of $P$ from $q$ to $b_2$.
We choose $Q$ such that the part of $Q$ from $p$ to $q$ coincides with $P_2$.
Let $Q_3$ be the part of $Q$ from $q$ to $b_1$.
Then no edge of $Q_3$ can be in the interior of the region $\myGamma$, otherwise $Q_3$ will cross $P$ again, thus making a cycle which can be reoriented to make $\myGamma$ smaller, which would contradict the minimality of $\myGamma$.
It is possible for vertices of $Q_3$ other than $q$ to lie on $P_1$.
Let $R$ be an $(a_1,b_2)$-path without repeated vertices.
Let $r$ be the first point on $R$ that is on $P_3\cup Q_3$.
We distinguish between two cases, depending on whether $r$ is on $P_3$ (Fig.~\ref{fig53}) or $Q_3$ (Fig.~\ref{fig54}).
\begin{case}
\item\label{case1} 
$r$ is on $P_3$ (including the case $r=q$).
See Fig.~\ref{fig53}.
\begin{figure}
\definecolor{qqqqff}{rgb}{0.,0.,1.}
\definecolor{ffffff}{rgb}{1.,1.,1.}
\definecolor{ffqqqq}{rgb}{1.,0.,0.}
\centering
\begin{tikzpicture}[line cap=round,line join=round,>=Stealth,x=0.7cm,y=0.7cm]
\fill [blue!15!white] (2.7419615242270683,5.355138980621861) -- (7.6,1.26) -- (5.54,1.64) -- (3.26,3.54) -- cycle;
\draw [blue!90!black,dashed,line width=1.2pt] (2.7419615242270683,5.355138980621861) -- (7.6,1.26);
\draw [blue!90!black] (4.8,2.9) node {$\myDelta$};
\draw [->,line width=1pt] (-1.62,-2.08) -- (2.12,-0.5);
\draw [->,line width=1pt] (7.,-2.14) -- (2.12,-0.5);
\draw [->,line width=1pt] (2.12,-0.5) -- (3.8,0.9);
\draw [->,line width=1pt] (3.8,0.9) -- (1.22,1.58);
\draw [->,line width=1pt] (1.22,1.58) -- (-2.18,-0.52);
\draw [->,line width=1pt] (3.8,0.9) -- (5.54,1.64);
\draw [->,line width=1pt] (5.54,1.64) -- (7.6,1.26);
\draw [->,line width=1pt] (2.7419615242270683,5.355138980621861) -- (3.26,3.54);
\draw [->,line width=1pt] (3.26,3.54) -- (1.22,1.58);
\draw [->,line width=1pt] (3.26,3.54) -- (5.54,1.64);
\draw [fill=ffqqqq] (-1.62,-2.08) circle (2.5pt);
\draw[color=black] (-2.2,-2.1) node {$a_2$};
\draw [fill=ffffff] (2.12,-0.5) circle (2.5pt);
\draw[color=black] (2.1,-1.0) node {$p$};
\draw [fill=ffqqqq] (7.,-2.14) circle (2.5pt);
\draw[color=black] (7.23,-1.72) node {$a_3$};
\draw [fill=ffffff] (3.8,0.9) circle (2.5pt);
\draw[color=black] (4.1,0.5) node {$q$};
\draw [fill=ffffff] (1.22,1.58) circle (2.5pt);
\draw[color=black] (1.26,1.1) node {$t$};
\draw [fill=qqqqff] (-2.18,-0.52) circle (2.5pt);
\draw[color=black] (-2.29,0) node {$b_1$};
\draw [fill=ffffff] (5.54,1.64) circle (2.5pt);
\draw[color=black] (5.8,2.01) node {$r$};
\draw [fill=qqqqff] (7.6,1.26) circle (2.5pt);
\draw[color=black] (7.6,0.8) node {$b_2$};
\draw [fill=ffffff] (3.26,3.54) circle (2.5pt);
\draw[color=black] (3.6,3.81) node {$s$};
\draw [fill=ffqqqq] (2.7419615242270683,5.355138980621861) circle (2.5pt);
\draw[color=black] (3.3,5.46) node {$a_1$};
\end{tikzpicture}
\caption{Case~\ref{case1}}\label{fig53}
\end{figure}
Then without loss of generality, the part of $R$ from $r$ to $b_2$ coincides with $P$.
We may also assume that among all $(a_1,b_2)$-paths hitting $P_3\cup Q_3$ first in $P_3$, we have chosen one with $r$ closest to $q$ on the path $P_3$.
Let $S$ be an $(a_1,b_1)$-path without repeated vertices, and let $s$ be the last point on $S$ that is also on $R$.
Then $s$ has to be on the part of $R$ from $a_1$ to $r$, otherwise $s$ would be on the part of $P_3$ from $r$ to $b_2$, not including $r$, and then $S$ would either have to cross itself or enter the interior of $\myGamma$, which would contradict either the choice of $S$ or the minimality of $\myGamma$.
Again, because $S$ does not cross itself, the part of $S$ from $s$ to $b_1$ cannot enter the polygon $\myDelta$ bounded by the part of $R$ from $a_1$ to $r$, the part of $P_3$ from $r$ to $b_2$ and the segment $a_1b_2$.
Let $t$ be the first point of $S$ that is on $Q_3$.
We have now described all $(a_i,b_j)$-paths, so by minimality of $G$ we have that $G$ consists of the directed edges of non-zero length among $\arc{a_1}{s}, \arc{s}{t}, \arc{q}{t}, \arc{q}{r}, \arc{s}{r}, \arc{t}{b_1}, \arc{r}{b_2}, \arc{p}{q}, \arc{a_2}{p}, \arc{a_3}{p}$. 
By applying the triangle inequality four times, we find that the perimeter of the quadrilateral $qrst$ is at least the sum of its diagonals $\norm{s-q}_2+\norm{t-r}_2$.
We now forget the directions of the directed edges, and replace the perimeter of $qrst$ by the diagonals to obtain a geometric graph that splits into a tree that connects $a_1,a_2,a_3$ and an edge-disjoint path that connects $b_1$ and $b_2$.
By Lemma~\ref{lem:FT} and the triangle inequality, the tree will have length bounded below by the length of the tree with edges $a_1o$, $a_2o$, $a_3o$, which equals $3$.
The path between $b_1$ and $b_2$ is bounded below by the distance $\norm{b_1-b_2}_2=2$.
That is,
\begin{align*}
\norm{G}_2 &\geq \norm{a_1-s}_2 + (\norm{s-t}_2 + \norm{q-t}_2 + \norm{q-r}_2 + \norm{s-r}_2)\\
&\mathrel{\phantom{\geq}} \mbox{} + \norm{t-b_1}_2 + \norm{r-b_2}_2 + \norm{p-q}_2 + \norm{a_2-p}_2 + \norm{a_3-p}_2\\
&\geq \norm{a_1-s}_2 + (\norm{s-q}_2 + \norm{t-r}_2) + \norm{t-b_1}_2 + \norm{r-b_2}_2\\
&\mathrel{\phantom{\geq}} \mbox{}  + \norm{p-q}_2 + \norm{a_2-p}_2 + \norm{a_3-p}_2\\
&= (\norm{a_1-s}_2+\norm{s-q}_2+\norm{p-q}_2+\norm{a_2-p}_2+\norm{a_3-p}_2)\\
&\mathrel{\phantom{\geq}} \mbox{} + (\norm{t-b_1}_2+\norm{t-r}_2+\norm{r-b_2}_2)\\
&\geq (\norm{a_1-p}_2 + \norm{a_2-p}_2 + \norm{a_3-p}_2) + \norm{b_1-b_2}_2\\
&\geq (\norm{a_1-o}_2 + \norm{a_2-o}_2 + \norm{a_3-o}_2) + \norm{b_1-b_2}_2 = 1+1+1+2,
\end{align*}
where we have used Lemma~\ref{lem:FT} in the last inequality and the triangle inequality in the others.

\item\label{case2}
$r$ is on $Q_3$ (and $r\neq q$).
See Fig.~\ref{fig54}.
\begin{figure}
\definecolor{qqqqff}{rgb}{0.,0.,1.}
\definecolor{ffffff}{rgb}{1.,1.,1.}
\definecolor{ffqqqq}{rgb}{1.,0.,0.}
\centering
\begin{tikzpicture}[line cap=round,line join=round,>=Stealth,x=0.7cm,y=0.7cm]
\fill [blue!15!white] (2.7419615242270683,5.355138980621861) .. controls (2.5,4) and (1.8,3) .. (1.92,2.2) .. controls (4,1.5) and (3.5,1) .. (4.32,0.64) .. controls (5.64,0.78) and (6.5, 0) .. (7.6,1.26) -- cycle;
\draw [blue!90!black,dashed,line width=1.2pt] (2.7419615242270683,5.355138980621861) -- (7.6,1.26);
\draw [blue!90!black] (4.1,2.3) node {\Large $\myGamma_{\!4}$};
\fill [blue!15!white] (-2.18,-0.52) -- (-1.62,-2.08) .. controls (0,0) and (1,-1) .. (2.12,-0.5) .. controls (3.24, 0) and (3,0.5) .. (4.32,0.64) .. controls (3.5,1) and (4,1.5) .. (1.92,2.2) .. controls (1.7,1.5) and (0.7,2) .. (-0.28,1.3) .. controls (-1.26,0.6) and (-2,-0.3) .. cycle;
\fill [blue!15!white] (1.92,2.2) .. controls (1.7,1.5) and (0.7,2) .. (-0.28,1.3) .. controls (-1.26,0.6) and (-2,-0.3) .. (-2.18,-0.52) -- (2.7419615242270683,5.355138980621861) .. controls (2.5,4) and (1.8,3) .. cycle;
\draw [blue!90!black,dashed,line width=1.2pt] (2.7419615242270683,5.355138980621861) -- (-2.18,-0.52);
\draw [blue!90!black,dashed,line width=1.2pt] (-2.18,-0.52) -- (-1.62,-2.08);
\draw [blue!90!black] (1,0.4) node {\Large $\myGamma_{\!2}$};
\draw [blue!90!black] (1.2,2.5) node {\Large $\myGamma_{\!0}$};
\draw [->,line width=1.2pt] (-1.62,-2.08) .. controls (0,0) and (1,-1) .. (2.12,-0.5) node [midway,below] {$P_1$}; 
\draw [->,line width=1.2pt] (2.12,-0.5) .. controls (3.24, 0) and (3,0.5) .. (4.32,0.64); 
\draw [->,line width=1.2pt] (4.32,0.64) .. controls (5.64,0.78) and (6.5, 0) .. (7.6,1.26) node [midway,above] {$P_3$}; 
\draw [->,line width=1.2pt] (7.,-2.14) .. controls (5,-1) and (3,-1.5) .. (2.12,-0.5) node [near start,above] {$Q_1$}; 
\draw [->,line width=1.2pt] (4.32,0.64) .. controls (3.5,1) and (4,1.5) .. (1.92,2.2); 
\draw [->,line width=1.2pt] (2.7419615242270683,5.355138980621861) .. controls (2.5,4) and (1.8,3) .. (1.92,2.2); 
\draw [->,line width=1.2pt] (1.92,2.2) .. controls (1.7,1.5) and (0.7,2) .. (-0.28,1.3); 
\draw [->,line width=1.2pt] (-0.28,1.3) .. controls (-1.26,0.6) and (-2,-0.3) .. (-2.18,-0.52); 
\draw [fill=ffqqqq] (-1.62,-2.08) circle (2.5pt);
\draw[color=black] (-2.11,-1.98) node {$a_2$};
\draw [fill=ffffff] (2.12,-0.5) circle (2.5pt);
\draw[color=black] (2.,-1.1) node {$p$};
\draw [fill=ffqqqq] (7.,-2.14) circle (2.5pt);
\draw[color=black] (7.23,-1.72) node {$a_3$};
\draw [fill=ffffff] (4.32,0.64) circle (2.5pt);
\draw[color=black] (4.4,0.2) node {$q$};
\draw [fill=ffffff] (1.92,2.2) circle (2.5pt);
\draw[color=black] (2.3,2.5) node {$r$};
\draw [fill=qqqqff] (-2.18,-0.52) circle (2.5pt);
\draw[color=black] (-2.37,0.1) node {$b_1$};
\draw [fill=qqqqff] (7.6,1.26) circle (2.5pt);
\draw[color=black] (7.9,0.8) node {$b_2$};
\draw [fill=ffqqqq] (2.7419615242270683,5.355138980621861) circle (2.5pt);
\draw[color=black] (3.3,5.46) node {$a_1$};
\draw [fill=ffffff] (-0.28,1.3) circle (2.5pt);
\draw[color=black] (-0.2,1.0) node {$s$};
\draw[color=black] (2.7,0.45) node {$P_2$};
\end{tikzpicture}
\caption{Case~\ref{case2}}\label{fig54}
\end{figure}
Let $s$ be the last point of $R$ on $Q_3$.
Without loss of generality, the part of $R$ from $r$ to $s$ coincides with $Q_3$.
By Lemma~\ref{lem:convex}, the region $\myGamma_{\!0}$ bounded by the part of $R$ from $a_1$ to $s$, the part of $Q_3$ from $s$ to $b_1$ and the segment $a_1b_1$, is connected.
The directed edge $e$ on $R$ following $s$ cannot be in $\myGamma_{\!0}$, because $R$ does not have repeated vertices and $s$ is the last point of $R$ on $Q_3$.
Therefore, $e$ is in the interior of either the region $\myGamma_{\!2}$ bounded by $Q_3$, the segment $b_1a_2$, and the part of $P$ from $a_2$ to $q$, or the region $\myGamma_{\!4}$ bounded by the part of $R$ from $a_1$ to $r$, the part of $Q_3$ from $q$ to $r$, $P_3$ and the segment $a_1b_2$ (and then $s=r$).
Let $t$ be the first point on $R$ that is also on $P$.
Without loss of generality, the part of $R$ from $t$ to $b_2$ coincides with $P$.
If $e$ is in the interior of $\myGamma_{\!2}$, then $t$ is either in $P_1$ or $P_2$, or the path from $s$ to $t$ hits $Q$ in the part from $q$ to $r$ before passing into $\myGamma_{\!4}$.

We thus have three subcases, depending on whether $t$ is on $P_1$, $P_2$, or $P_3$.

\begin{case}
\item\label{case2.2} 
$t\in P_2$.
See Fig.~\ref{fig522}.
By minimality of $G$, its edges are the ones of non-zero length among $\arc{a_1}{r}$, $\arc{r}{s}$, $\arc{s}{b_1}$, $\arc{s}{t}$, $\arc{a_2}{p}$, $\arc{p}{t}$, $\arc{t}{q}$, $\arc{q}{r}$, $\arc{q}{b_2}$, $\arc{a_3}{p}$.
\begin{figure}
\definecolor{qqqqff}{rgb}{0.,0.,1.}
\definecolor{ffffff}{rgb}{1.,1.,1.}
\definecolor{ffqqqq}{rgb}{1.,0.,0.}
\centering
\begin{tikzpicture}[line cap=round,line join=round,>=Stealth,x=0.7cm,y=0.7cm]
\draw [->,line width=1pt] (-1.62,-2.08) -- (2.12,-0.5);
\draw [->,line width=1pt] (7.,-2.14) -- (2.12,-0.5);
\draw [->,line width=1pt] (4.32,0.64) -- (1.92,2.2);
\draw [->,line width=1pt] (4.32,0.64) -- (7.6,1.26);
\draw [->,line width=1pt] (2.7419615242270683,5.355138980621861) -- (1.92,2.2);
\draw [->,line width=1pt] (1.92,2.2) -- (-0.28,1.3);
\draw [->,line width=1pt] (-0.28,1.3) -- (-2.18,-0.52);
\draw [->,line width=1pt] (2.12,-0.5) -- (3.02,0.12);
\draw [->,line width=1pt] (3.02,0.12) -- (4.32,0.64);
\draw [->,line width=1pt] (-0.28,1.3) -- (3.02,0.12);
\draw [fill=ffqqqq] (-1.62,-2.08) circle (2.5pt);
\draw[color=black] (-2.11,-1.98) node {$a_2$};
\draw [fill=ffffff] (2.12,-0.5) circle (2.5pt);
\draw[color=black] (2.1,-1.0) node {$p$};
\draw [fill=ffqqqq] (7.,-2.14) circle (2.5pt);
\draw[color=black] (7.23,-1.72) node {$a_3$};
\draw [fill=ffffff] (4.32,0.64) circle (2.5pt);
\draw[color=black] (4.5,0.3) node {$q$};
\draw [fill=ffffff] (1.92,2.2) circle (2.5pt);
\draw[color=black] (1.96,1.7) node {$r$};
\draw [fill=qqqqff] (-2.18,-0.52) circle (2.5pt);
\draw[color=black] (-2.37,0.18) node {$b_1$};
\draw [fill=qqqqff] (7.6,1.26) circle (2.5pt);
\draw[color=black] (7.6,0.8) node {$b_2$};
\draw [fill=ffqqqq] (2.7419615242270683,5.355138980621861) circle (2.5pt);
\draw[color=black] (3.3,5.4) node {$a_1$};
\draw [fill=ffffff] (-0.28,1.3) circle (2.5pt);
\draw[color=black] (-0.4,1.67) node {$s$};
\draw [fill=ffffff] (3.02,0.12) circle (2.5pt);
\draw[color=black] (3.32,-0.15) node {$t$};
\end{tikzpicture}
\caption{Case~\ref{case2.2}}\label{fig522}
\end{figure}
Then we finish as in Case~\ref{case1} by replacing the perimeter of the quadrilateral $rstq$ by its diagonals.

\item\label{case2.1} 
$t\in P_3$.
Then $R$ hits the part of $Q_3$ from $q$ to $r$ before hitting $P_3$ at $t$.
Let $u$ be the point on $Q_3$ where $R$ hits $Q_3$ first and let $v$ be the last point of $R$ on the part of $Q_3$ from $u$ to $r$ (Fig.~\ref{fig521}).
By minimality of $G$, its edges are the ones of non-zero length among $\arc{a_1}{r}$, $\arc{r}{s}$, $\arc{s}{b_1}$, $\arc{s}{u}$, $\arc{u}{v}$, $\arc{v}{r}$, $\arc{v}{t}$, $\arc{t}{b_2}$, $\arc{a_2}{p}$, $\arc{p}{q}$, $\arc{q}{u}$, $\arc{q}{t}$, $\arc{a_3}{p}$.
\begin{figure}
\definecolor{qqqqff}{rgb}{0.,0.,1.}
\definecolor{ffffff}{rgb}{1.,1.,1.}
\definecolor{ffqqqq}{rgb}{1.,0.,0.}
\centering
\begin{tikzpicture}[line cap=round,line join=round,>=Stealth,x=0.7cm,y=0.7cm]
\draw [->,line width=1pt] (-1.62,-2.08) -- (2.12,-0.5);
\draw [->,line width=1pt] (7.,-2.14) -- (2.12,-0.5);
\draw [->,line width=1pt] (2.12,-0.5) -- (4.36,0.24);
\draw [->,line width=1pt] (2.7419615242270683,5.355138980621861) -- (1.92,2.2);
\draw [->,line width=1pt] (1.92,2.2) -- (-0.28,1.3);
\draw [->,line width=1pt] (-0.28,1.3) -- (-2.18,-0.52);
\draw [->,line width=1pt] (-0.28,1.3) -- (3.88,1.2);
\draw [->,line width=1pt] (4.36,0.24) -- (3.88,1.2);
\draw [->,line width=1pt] (3.88,1.2) -- (3.26,1.98);
\draw [->,line width=1pt] (3.26,1.98) -- (1.92,2.2);
\draw [->,line width=1pt] (3.26,1.98) -- (5.84,1.1);
\draw [->,line width=1pt] (4.36,0.24) -- (5.84,1.1);
\draw [->,line width=1pt] (5.84,1.1) -- (7.6,1.26);
\draw [fill=ffqqqq] (-1.62,-2.08) circle (2.5pt);
\draw[color=black] (-2.11,-1.98) node {$a_2$};
\draw [fill=ffffff] (2.12,-0.5) circle (2.5pt);
\draw[color=black] (2.1,-1.0) node {$p$};
\draw [fill=ffqqqq] (7.,-2.14) circle (2.5pt);
\draw[color=black] (7.23,-1.72) node {$a_3$};
\draw [fill=ffffff] (4.36,0.24) circle (2.5pt);
\draw[color=black] (4.6,-0.1) node {$q$};
\draw [fill=ffffff] (1.92,2.2) circle (2.5pt);
\draw[color=black] (1.96,1.8) node {$r$};
\draw [fill=qqqqff] (-2.18,-0.52) circle (2.5pt);
\draw[color=black] (-2.37,0.1) node {$b_1$};
\draw [fill=qqqqff] (7.6,1.26) circle (2.5pt);
\draw[color=black] (7.45,0.8) node {$b_2$};
\draw [fill=ffqqqq] (2.7419615242270683,5.355138980621861) circle (2.5pt);
\draw[color=black] (3.3,5.4) node {$a_1$};
\draw [fill=ffffff] (-0.28,1.3) circle (2.5pt);
\draw[color=black] (-0.4,1.67) node {$s$};
\draw [fill=ffffff] (3.88,1.2) circle (2.5pt);
\draw[color=black] (3.6,0.8) node {$u$};
\draw [fill=ffffff] (3.26,1.98) circle (2.5pt);
\draw[color=black] (3.0,1.7) node {$v$};
\draw [fill=ffffff] (5.84,1.1) circle (2.5pt);
\draw[color=black] (5.9,0.7) node {$t$};
\end{tikzpicture}
\caption{Case~\ref{case2.1}}\label{fig521}
\end{figure}
As before, we finish as in Case~\ref{case1} by replacing the perimeter of the quadrilateral $rsuv$ by its diagonals.

\item\label{case2.3} 
$t\in P_1$ (Fig.~\ref{fig523}). 
\begin{figure}
\definecolor{qqqqff}{rgb}{0.,0.,1.}
\definecolor{ffffff}{rgb}{1.,1.,1.}
\definecolor{ffqqqq}{rgb}{1.,0.,0.}
\centering
\begin{tikzpicture}[line cap=round,line join=round,>=Stealth,x=0.7cm,y=0.7cm]
\draw [->,line width=1pt] (-1.62,-2.08) -- (2.12,-0.5);
\draw [->,line width=1pt] (7.,-2.14) -- (3.02,0.12); 
\draw [->,line width=1pt] (4.32,0.64) -- (1.92,2.2);
\draw [->,line width=1pt] (4.32,0.64) -- (7.6,1.26);
\draw [->,line width=1pt] (2.7419615242270683,5.355138980621861) -- (1.92,2.2);
\draw [->,line width=1pt] (1.92,2.2) -- (-0.28,1.3);
\draw [->,line width=1pt] (-0.28,1.3) -- (-2.18,-0.52);
\draw [->,line width=1pt] (2.12,-0.5) -- (3.02,0.12);
\draw [->,line width=1pt] (3.02,0.12) -- (4.32,0.64);
\draw [->,line width=1pt] (-0.28,1.3) -- (2.12,-0.5); 
\draw [fill=ffqqqq] (-1.62,-2.08) circle (2.5pt);
\draw[color=black] (-2.11,-1.98) node {$a_2$};
\draw [fill=ffffff] (2.12,-0.5) circle (2.5pt);
\draw[color=black] (2.1,-1.0) node {$t$};
\draw [fill=ffqqqq] (7.,-2.14) circle (2.5pt);
\draw[color=black] (7.23,-1.72) node {$a_3$};
\draw [fill=ffffff] (4.32,0.64) circle (2.5pt);
\draw[color=black] (4.5,0.2) node {$q$};
\draw [fill=ffffff] (1.92,2.2) circle (2.5pt);
\draw[color=black] (1.8,1.8) node {$r$};
\draw [fill=qqqqff] (-2.18,-0.52) circle (2.5pt);
\draw[color=black] (-2.37,0.1) node {$b_1$};
\draw [fill=qqqqff] (7.6,1.26) circle (2.5pt);
\draw[color=black] (7.7,0.7) node {$b_2$};
\draw [fill=ffqqqq] (2.7419615242270683,5.355138980621861) circle (2.5pt);
\draw[color=black] (3.3,5.4) node {$a_1$};
\draw [fill=ffffff] (-0.28,1.3) circle (2.5pt);
\draw[color=black] (-0.4,1.67) node {$s$};
\draw [fill=ffffff] (3.02,0.12) circle (2.5pt);
\draw[color=black] (3,-0.4) node {$p$};
\end{tikzpicture}
\caption{Case~\ref{case2.3}}\label{fig523}
\end{figure}
This case is slightly more complicated since we now have a pentagon $pqrst$ instead of a quadrilateral.
However, we show that in this case, at least one of the edges of the pentagon must be degenerate, and then we finish as before by replacing the edges of the resulting quadrilateral by its diagonals.
Suppose to the contrary that all five edges of $pqrst$ have non-zero lengths.
Since $\myangle b_1a_1b_2=90^\circ<120^\circ$, $\deg^+(a_1)=1$ and $\deg^-(a_1)=0$ by Lemmas~\ref{lem:FT3} and \ref{lem:convex}.
Thus $a_1\neq r$, hence $\myangle qrs=120^\circ$ by Lemma~\ref{lem:FT}.
Since $\myangle b_1a_2a_3=180^\circ-\myangle b_1a_1a_3=120^\circ-\myangle b_1a_1a_2<120^\circ$,
and similarly, $\myangle b_2a_3a_3<120^\circ$,
we obtain in the same way that $\myangle qpt=\myangle pts=120^\circ$.
Finally, we either have that $b_1\neq s$ and then also $\myangle tsr=120^\circ$, or $b_1=s$, and then $\myangle tsr=\myangle tb_1r\geq120^\circ$ by Lemma~\ref{lem:FT3}.
(In fact, equality has to hold since $\myangle a_1b_1a_2=120^\circ$.)
Similarly, $\myangle rqp\geq 120^\circ$.
It follows that the interior angle sum of $pqrst$ is at least $5\cdot120^\circ$, a contradiction.

Therefore, we have that at least one of the edges of $pqrst$ has zero length.
If either $p=q$, $p=t$ or $s=t$, then we replace the perimeter of the quadrilateral by its diagonals and finish as in Case~\ref{case1}.
If on the other hand, $r=q$ or $r=s$, then we can already split the underlying graph into two edge-disjoint connected subgraphs, one joining $a_1,a_2,a_3$, and the other a path joining $b_1$ and $b_2$.
\qedhere
\end{case}
\end{case}
\end{proof}

\subsection*{Acknowledgement}
We thank Frank Morgan for providing copies of references \cite{Alfaro-Thesis} and \cite{ACSS}, and the referee for valuable comments leading to an improved paper.

\end{document}